\numberwithin{equation}{section}
\newcommand{\supp}{\operatorname{Supp}}
\newcommand{\reg}{\operatorname{Reg}}
\newcommand{\ai}{\alpha}
\newcommand{\sing}{\operatorname{Sing}}
\newcommand{\inj}{\operatorname{Inj}}
\newcommand{\es}{\emptyset}
\newcommand{\mn}{\mathbf{MIN}}
\newcommand{\be}{\beta}
\newcommand{\Ga}{\Gamma}
\newcommand{\ga}{\gamma}
\newcommand{\de}{\delta}
\newcommand{\De}{\Delta}
\newcommand{\e}{\epsilon}
\newcommand{\lag}{\mathbf{LavGap}}
\newcommand{\hm}{\mathcal{H}}
\newcommand{\lam}{\lambda}
\newcommand{\om}{\omega}
\newcommand{\Om}{\Omega}
\newcommand{\si}{\sigma}
\newcommand{\Si}{\Sigma}
\newcommand{\rh}{\rho}
\newcommand{\ta}{\theta}
\newcommand{\cms}{\operatorname{comass}}
\newcommand\res{\mathop{\hbox{\vrule height 7pt width .3pt depth 0pt
			\vrule height .3pt width 5pt depth 0pt}}\nolimits}
\newcommand{\ms}{\mathbf{M}}
\newcommand{\cd}{\cdots}
\newcommand{\T}{\mathbf{T}}
\newcommand{\s}{\subset}
\newcommand{\cp}{^\complement}
\newcommand{\la}{\langle}
\newcommand{\ra}{\rangle}
\newcommand{\ran}{\mathbb{R}_{\operatorname{an}}}
\newcommand{\nog}[1]{\no{#1}^{\R}}
\newcommand{\no}[1]{\left\lVert#1\right\rVert}
\DeclarePairedDelimiter{\ri}{\la}{\ra}
\newcommand{\dvol}{\operatorname{dvol}}
\newcommand{\du}{^\ast}
\newcommand{\br}{\mathbf{B}_r}
\newcommand{\pf}{_\ast}
\newcommand{\spt}{\operatorname{Supp}}
\newcommand{\ka}{\kappa}
\newcommand{\m}{^{-1}}
\newcommand{\ts}{\otimes}
\newcommand{\pd}{\partial}
\newcommand{\na}{\nabla}
\newcommand{\N}{\mathbb{N}}
\newcommand{\R}{\mathbb{R}}
\newcommand{\Z}{\mathbb{Z}}
\newcommand{\Q}{\mathbb{Q}}
\newcommand{\eq}{\Leftrightarrow}
\def\thm@space@setup{%
	\thm@preskip=0.2cm plus 0cm minus 0cm
	\thm@postskip=\thm@preskip 
}
\theoremstyle{plain}
\newtheorem{thm}{Theorem}
\newtheorem{rst}{Result}
\newtheorem{lem}{Lemma}[subsection]
\newtheorem{assump}{Assumption}[subsection]
\newtheorem{cor}{Corollary}[subsection]
\newtheorem{fact}{Fact}[subsection]
\newtheorem{defn}{Definition}[subsection]
\newtheorem{conj}{Conjecture}
\newtheorem{prob}{Problem}
\theoremstyle{definition}
\title[The Hasse Principle for Area-minimizing Submanifolds]{The Hasse Principle for Area-minimizing Submanifolds}
\author{Zhenhua Liu}
\dedicatory{Dedicated to Xunjing Wei}
\numberwithin{equation}{subsection}
\begin{document}
	\setlength{\abovedisplayskip}{5pt}
	\setlength{\belowdisplayskip}{5pt}
	\setlength{\abovedisplayshortskip}{5pt}
	\setlength{\belowdisplayshortskip}{5pt}
	\begin{abstract}
	The Hasse principle in number theory states that information about integral solutions to Diophantine equations can be pieced together from real solutions and solutions modulo prime powers. We show that an analogous Hasse principle holds for area-minimizing submanifolds: information about area-minimizing submanifolds in integral homology can be recovered from those in real homology and mod $n$ homology for all $n\in \mathbb{Z}_{\ge 2}$. As a consequence we answer several questions of Almgren, Morgan and White and prove: area-minimizing submanifolds are not generically calibrated, products of area-minimizing submanifolds are not generically area-minimizing, and classification of area-minimizing pairs of planes mod $n$ for $n\ge 4$. 
	\end{abstract}
	\maketitle\vspace{-3em}
	\section{Introduction}\label{hsintro}
	The Hasse principle \cite{HHhp,JSec} in number theory states that information about integral solutions to Diophantine equations can be pieced together from real solutions and all solutions modulo prime powers. It dates back to the Hasse-Minkowski theorem for quadratic forms \cite{JSca}, which says a quadratic form represents zero over the integers if and only if it represents zero over the reals and modulo prime powers. Another classical example is the original formulation of the Birch and Swinnerton-Dyer conjecture \cite{BSD}, where the  number of solutions modulo primes conjecturally gives ranks of rational solutions of elliptic curves. Though the Hasse principle does not hold in general \cite{EShp}, the guiding philosophy remains a pillar of modern number theory.
	
	An analytical analogue lies undetected in Riemannian geometry and geometric measure theory. The classical variational problem of finding area-minimizing representatives in integral homology classes is solved by Federer and Fleming (\cite{FF}):
	\begin{quote}
		\emph{Every integral homology class on a compact Riemannian manifold admits an area-minimizing representative.}
	\end{quote}
	Their minimizers live in the space of integral currents, i.e., the Whitney-flat closure of integer-coefficient polyhedral chains with themselves and their boundaries having finite area \cite{FF}.
	
	Parallel existence theorems hold for real homology and for homology with $\mathbb Z/n\mathbb Z$ coefficients for all $n\in \Z_{\ge 2}$, established respectively in \cite{HFrf} and \cite{WFfc,BWrc,BWdt,WZmod2}. In these cases, the set of representatives of homology classes is significantly enlarged and a priori there is no reason to expect any connection at all between area-minimizing representatives of homology classes in different coefficients.
	
	According to Ulam \cite{SUna}, Banach often remarked
	\begin{quote}
		\emph{Good mathematicians see analogies between theorems or theories; the very best ones see analogies between analogies.}
	\end{quote}
	Inspired by this maxim, we show the number theoretic Hasse principle has a counterpart in geometric measure theory, two areas seemingly worlds apart:	\begin{rst}\label{rstm}
		Area-minimizing representatives of integral homology classes can be recovered from those of real homology and mod $n$ homologies for $n\in \Z_{\ge 2}.$
	\end{rst}
The more precise statement of the above Result \ref{rstm} is Theorem \ref{thmv} in Section \ref{secmtc}. Analogous results also holds in the setting of area-minimizing cones and Plateau problems in Euclidean space which we will also introduce below.

Now we discuss several consequences. Unlike the case in number theory, where solutions to Diophantine equations are easier to calculate in modulo arithmetic, understanding area-minimizing representatives in mod $n$ homology is generally harder. 
	
Let us start with the problem of proving area-minimization. The method of calibrations  \cite{HL,HFrf} remains the  only known general method of proving area-minimization on compact manifolds and it applies to integral homology but not mod $n$ homology. In mod $n$ homology, there is no known general way of proving area-minimizing. For instance, to the author's knowledge, it is unknown if any degree $d$ algebraic subvariety of the standard complex projective space is area-minimizing mod $n$ or not for $d\ge 2.$ As a direct corollary of our main theorem, we deduce that
\begin{cor}\label{corn}
	Every area-minimizing integral current on a compact Riemannian manifold is area-minimizing mod $n$ for infinitely many $n$.
\end{cor}
Thus, all the classical calibrated  subvarieties on compact manifolds with special holonomy, including K\"{a}hler subvarieties, special Lagrangians \cite{RBcs,RBes}, associative/coassociative \cite{RBcs,AHNP} submanifolds are always area-minimizing mod $n$ for infinitely many $n.$

In the setting of proving area-minimization in Euclidean spaces, again calibrations is the dominant method. Except for directed slicing proposed by Lawlor \cite{GLds,GLsac}, essentially the only way to prove area-minimization mod $n$ is via area-non-increasing projections, including Frank Morgan's pioneering work \cite{FMcalv} (generalized in \cite[Lemma 2.11]{ZLa}) and the groundbreaking work of Lawlor \cite{GL}. Morgan \cite{FMcalv} successfully proved that complex algebraic subvarieties of degree $d$ in Euclidean space are area-minimizing mod $n$ for $n\ge 2d.$  Unfortunately, they all have limitations. For example, it is unknown \cite[Proposition 5.6]{FMcalv} if the examples \cite{HL} of special Lagrangian \cite{MHsc,MHsc2,DJsl,YZsmp,RBssl} cones, associative/coassociative \cite{RBas} cones and Cayley \cite{JLa,JL2r} cones are area-minimizing mod $n$ for some $n$ or not. The Euclidean version of our main theorem says that
\begin{rst}\label{rstc}
	An area-minimizing integral cone with subanalytic link is area-minimizing mod $n$ for $n$ sufficiently large.
\end{rst}		
The more precise statement of Result \ref{rstc} is Theorem \ref{thmcm} in Section \ref{secmtb}. Here by subanalytic link we mean that the intersection of the support of the cone with the unit sphere is a subanalytic set. To the author's knowledge, every area-minimizing integral cone known so far has subanalytic link. Concrete examples include every $3$-dimensional area-minimizing cone \cite{DSS1,DSS2,DSS3} and every cylindrical area-minimizing cone that is the product of a cone with smooth link with Euclidean space. Thus, Result \ref{rstc}  significantly enlarge the class of known mod $n$ area-minimizing cones.

The simplest type of singular area-minimizing cone is the so called pairs of planes, i.e., the union of two subspaces of the same dimension in Euclidean space. Area-minimizing pairs of planes among integral currents are classified in \cite{GLac,DN}, answering affirmatively the angle conjecture of Morgan \cite{FMtd}. To the author's knowledge, for general $n,$ only partial results were known by \cite{FMcalv,GL}. As a corollary of our method, we also solve the problem of classifying area-minimizing pairs of planes mod $n$ for $n$ at least $4.$
\begin{rst}\label{rstcp}
	A pair of oriented planes is area-minimizing mod $n$ for $n\ge 4$ if and only if the pair is an area-minimizing integral current. 
\end{rst}
The more precise statement of Result \ref{rstcp} is Theorem \ref{thmcpm} in Section \ref{secmtb}. For $n=2$ and $n=3$, we recall Morgan's conjectures \cite[Problem 16]{FMwtc} in Section \ref{secremp}.

Next, let us discuss the implication of our results on the regularity theory of area-minimizing currents. The state of the art on area-minimizing integral currents \cite{DMS1,DMS2,DMS3,BK1,BK2,BK3} prove the codimension $2$ (with respect to dimension of the current) countable rectifiability of singular set, while for area-minimizing currents mod $n$, \cite{DMSmodq,DHMSS,MWmodp} proved the codimension $1$ (with respect to dimension of the current) countable rectifiability of singular sets. In general, this $1$-dimensional regularity gap cannot be bridged due to concrete examples \cite[second remark after Theorem E]{MWmodp}. Corollary \ref{corn} tells us area-minimizing currents mod $n$ can have the same regularity as area-minimizing integral currents, e.g., enjoying better regularity, existence of fractal singular sets \cite{ZLa} and non-smoothable singular sets \cite{ZLns}. 

In the setting of Plateau problem on Euclidean spaces, Almgren asked \cite[Problem 4.3]{GMT}
\begin{quote}To what extent is it possible, for example, to correlate combinatorial complexity or singularity structure in area-minimizing currents with boundary properties?
\end{quote}
Inspired by \cite{FMmmn,EWW} in the hypersurface case, we prove that
\begin{cor}\label{thmp}
With smooth or subanalytic boundary, the solution to Plateau problem mod $n$ is obtained by integral currents for $n$ large enough, thus enjoying an improvement of regularity.
\end{cor}
See also Corollary \ref{coras} in Section \ref{secmtc}. We will later show that our main theorem is quantitatively sharp (Theorem \ref{thmvn}). The method we used to prove the sharpness of Result \ref{rstm} incidentally solved several other problems. In all previous discussions, calibrations remain the only known general method to prove area-minimizing. Thus, it is natural to ask if being calibrated is a generic intrinsic property of area-minimizing submanifolds \cite{MOMF}. In the positive direction, Federer proved in \cite[Section 5.10]{HFrf}
\begin{quote}
	Area-minimizing representatives of integral hypersurfaces classes on oriented Riemannian manifolds are calibrated by weakly closed measurable forms.	
\end{quote}
Negative evidence has been provided by \cite{HF,FMmc,BWmc,BKsf,KSsf,MKsf,MFsf} As a by-product of our method, we show that the generically calibrated is a false statement in higher codimensions:
\begin{rst}\label{rstnc}
On orientable manifolds, area-minimizing integral currents of codimension at least two are not generically calibrated.\end{rst}
Here by not generically calibrated, we mean that for each nonzero integral homology class, we can find non-empty open sets of metrics, in which, no area-minimizing integral current of the class can be calibrated by a weakly closed measurable form. Thus, we mean that the statement of calibrated in generic metrics is false, i.e., negation of generic metric. This choice of language is unfortunate but simpler for the introduction. We will introduce the detailed version Theorem \ref{thmnc} in Section \ref{secmtc}.

	Frank Morgan asked \cite[Problem 3.7]{GMT},
\begin{quote}
	Is the Cartesian product of two area-minimizing surfaces area-minimizing?
\end{quote}
We give the following answer.\begin{rst}\label{rstnp}
On a product of two closed oriented manifolds with non-vanishing Betti numbers, products of  area-minimizing representatives of some homology classes are not generically area-minimizing.
\end{rst}
Again, we mean the negation of the generic statement, i.e., existence of non-empty open sets of metrics in which products of area-minimizing integral currents in some homology classes are not area-minimizing. The more precise statement is Theorem \ref{thmkun} in Section \ref{secprod}.
\subsection{Sketch of Proof}\label{secsof}
Let us first give a sketch of proof of Result \ref{rstm} and related theorems. The proof is essentially inspired by \cite{FMmmn,EWW}. By the universal coefficient, every integral homology class $[\Si]$ on a compact manifold has a canonical image $[\Si]^{\Z/n\Z},$ the mod $n$ reduction of $[\Si]$, in $\Z/n\Z$ homology. Roughly speaking, to prove Result \ref{rstm}, we prove directly that for infinitely many $n$, the mod $n$ area-minimizing representative of $[\Si]^{\Z/n\Z}$ lifts to an integral area-minimizing representative of $[\Si]$. Similarly, in the setting of Plateau problems in Euclidean spaces, heuristically we want to lift $\Z/n\Z$-area-minimizing representatives of relative homology classes. The lifting consists of three steps:
\begin{enumerate}
\item Using monotonicity formulas, we get a priori density upper bounds on the mod $n$ area-minimizing representatives of $[\Si]^{\Z/n\Z}$,
\item Using the density upper bounds and regularity theory in \cite{DHMS}, we kill $Y$-shaped singularities and lift the mod $n$ area-minimizing representatives to area-minimizing integral cycles,
\item Using Federer's theory of real flat chains \cite{HFrf} as asymptotic data, we determine the homology class of lifted area-minimizing integral cycles and finish the proof.
\end{enumerate}
The first step is carried out in Section \ref{secmon} and Section \ref{secmonb}. 

The second step is carried out in Section \ref{secreg}. Roughly speaking, the obstruction to lift a mod $n$ cycle into an integral cycles lies in the $(d-1)$-dimensional stratum of the mod $n$ cycle (Lemma \ref{lemdv}). If a mod $n$ area-minimizing cycle does not have $Y$-shaped triple junction like singularities, then it is an integral cycle by the regularity theory developed in \cite{DHMS}. 

To kill the $Y$-shaped singularities of  the mod $n$ area-minimizing cycles representing $[\Si]^{\Z/n\Z}$, we observe that, for large $n$, the classification of $Y$-shaped singularities gives a priori density lower bounds of $Y$-shaped singularities growing linearly on $n$. Then the the a priori density bound from the first step gives a contradiction

We achieve the last step in Section \ref{secfed}. The goal is to tell apart different integral homology classes that has the same $\Z/n\Z$-reduction. This requires investigating in detail the growth of the mass of area-minimizing representatives in integral homology, which we will prove to be ill-behaved on finite subsets (Corollary \ref{corbad}). Fortunately, Federer \cite{HFrf} developed a very general theory, showing that the mass of area-minimizing representatives in real homology controls the asymptotic growth of those in integral homology. which allows us to finish the proof of Result \ref{rstm}. Corollary \ref{corn} follows directly. The proof of Result  \ref{rstc}, Result  \ref{rstcp} and Corollary \ref{thmp} in the Euclidean setting is the same by replacing homology in our argument with relative homology. 

Result \ref{rstnc}, Result \ref{rstnp} uses a different perspective that arises from Theorem \ref{thmvn} in Section \ref{secmtc} in the next section, when investigating the sharpness of Result \ref{rstm}. Roughly speaking, all three rely on constructing metrics in which area-minimizing representatives of $[\Si]$ have very large area compared to those of some multiple of $[\Si]$. The construction is as follows. Suppose $[\Si]$ is not a torsion class, i.e., $[\Si]$ generating a free abelian group, and $\mu_{[\Si]}[\Si]$ can be represented by a connected embedded submanifold $N.$ Take a tubular neighborhood $U(N)$ of $N$. Then algebraic topology tells us that any cycle contained in $U(N)$ must represents a homology class that is a multiple of $\mu_{[\Si]}[\Si]$. Thus, any area-minimizing representative of $[\Si]$ cannot stay in $U(N).$ Zhang \cite{YZt,YZa} (summarized in Lemma \ref{lemzhang}) gives us a smooth metric in which $N$ is an area-minimizing representative of $\mu_{[\Si]}[\Si].$ Making the metric outside of $U(N)$ very large, we can force any area-minimizing representative of $[\Si]$ to have area much larger than $[\Si]$ since it cannot stay inside $U(N).$
\subsection{Organization}
In Section \ref{secmt} we will give more technical introductions and give more precise formulations of our main results introduced in Section \ref{hsintro}. The rest of the manuscript will be devoted to prove the theorems in Section \ref{secmt}.

In Section \ref{secbd} we will introduce the basic definitions and collect several technical preliminaries.

In Section \ref{secmon} and \ref{secmonb} we will prove a priori density estimates carrying out the first step outlined in Section \ref{secsof}.

In Section \ref{secreg}, we will use regularity theory developed in \cite{DHMS} to carry out the second step outlined in Section \ref{secsof}.

In Section \ref{secfed}, we will carry out the third step outlined in Section \ref{secsof} using Federer's seminal work \cite{HFrf}.

In Section \ref{secpm}, we will wrap up our argument and prove  Theorems \ref{thmv}, \ref{thmvg}, \ref{thmvb}, \ref{thmcpm} and  Corollary \ref{coras}..

In Section \ref{secd=1}, we will deal with the case of $d=1$ in Theorem \ref{thmcm} and provide an alternative argument that does not use Federer's work \cite{HF} for Plateau problems.

In Section \ref{secpfs} we will prove Theorems \ref{thmvn} and \ref{thmnc}.

In Section \ref{secpfp} we will prove Theorem \ref{thmkun}.

In Section \ref{secrem} we will give some remarks.
	\section*{Acknowledgements}
This manuscript is part of the author's Ph.D. dissertation and  supersedes \cite{ZLar,ZLnco}. The author would like to express sincere thanks towards the defense committee, Professors Camillo De Lellis, Leon Simon and Shing-tung Yau for their unwavering support. The author is immensely grateful for the teachings of his advisor, Professor De Lellis. The author is deeply indebted to Professors Simon and Yau for their extraordinary generosity, guidance, and kindness throughout the years. On the occasion of Professor Simon's 81st birthday and Professor Yau's 77th birthday, the author warmly extends best wishes to them.  Sincere thanks go to the outside dissertation readers, Professors Robert Bryant and Antoine Song, for many helpful suggestions.

Special tribute is paid to Professors William Allard and Frederick Almgren. Sincere thanks are paid to Professors Frank Morgan and Antoine Song, whose works are the inspirations for the author to go from local regularity to global geometry. Professor Morgan has also raised several suggestions that ultimately inspire a significant expansion of the manuscript. Many inspirations also come from Professor Brian White's foundational works on flat chains. A conversation with Professor Michael Freedman partially inspired Theorem \ref{thmvn}. Special thanks go to Professor Yang Li, who have raised many helpful suggestions. He suggested the author to write up Theorem \ref{thmkun}. Thanks goes to Professor Zhihan Wang for explaining the non-subanalytic nature of Hardt-Simon foliation. 
\section{Detailed introduction to the main theorems}\label{secmt}In this section, we will introduce the geometric measure theory of flat chains and write down a more precise formulation of Result \ref{rstm}. Our main theorems have two different set-ups, one on compact manifolds and the other on Euclidean spaces, so we will treat them differently. Result  \ref{thmp} is different in theme from other theorems and we will discuss the setting as well. The reader is highly suggested to read this section at least once in order to understand the basic ideas. More technical lemmas and backgrounds will be left to the next section. 
\subsection{Compact setting}\label{secmtc}Let us start with the compact manifold setting.
\begin{assump}\label{assumpm}(Compact setting)	Assume that 
	\begin{itemize}
		\item $M$ is a $(d+c)$-dimensional compact closed not necessarily orientable smooth manifold, with $d,c\in\Z_{\ge 1},$
		\item $[\Si]\in H_d(M,\Z)$ is a $d$-dimensional integral homology class,
		\item $g$ is a smooth Riemannian metric on $M.$
\end{itemize} \end{assump}Recall \cite{AH} that $H_d(M,\Z)$ is a finitely generated abelian group which can be written as a splitting up to choices of basis,
\begin{align}\label{decomph}
H_d(M,\Z)=\Z^{b_d}\oplus\bigoplus_{i\in I}\Z/p_i^{\nu_i}\Z,
\end{align}where $b_d$ is the $d$-th Betti number of $M$, $I$ is a finite set of indices, the $p_i$ are not necessarily distinct prime numbers and $\nu_i\in \Z_{\ge 1}$. By \cite[Theorem 1.57(c)]{JMgt}, the integers $\{b_d\}\cup_{i\in I}\{p_i,v_i\}$ and the subgroup $\bigoplus_{i\in I}\Z/p_i^{\nu_i}\Z$ are the canonically defined and independent of the choice of basis. \begin{defn}\label{defntm}
Use $G_d(M,\Z)$ to denote the torsion subgroup of $H_d(M,\Z)$, i.e., 
\begin{align*}
G_d(M,\Z)=\bigoplus_{i\in I}\Z/p_i^{\nu_i}\Z.
\end{align*}Define the $d$-th torsion number $\tau_d$ of $M$ to be
\begin{align*}
\tau_d=\operatorname{lcm}_{i\in I}p_i^{\nu_i},
\end{align*}where $\operatorname{lcm}$ means the 
least common multiple. Set $\tau_d=1$ if $H_d(M,\Z)$ is torsion-free. 
\end{defn}
Like the $d$-th Betti number $b_d$, the $d$-th torsion number $\tau_d$ is an invariant of $M.$\begin{defn}We classify the integral homology class $[\Si]$ according to its algebraic properties.\label{defnhom}
\begin{itemize}
\item 	We say $[\Si]$ is a torsion class if there is an integer $v$ such that $v[\Si]=0,$ i.e., belonging to $G_d(M,\Z)$. Otherwise we say $[\Si]$ is a non-torsion class,
\item If $[\Si]$ is a non-torsion class, $M$ is orientable, and $d\ge1 ,c\ge 2$, then the representable multiplicity $\mu_{[\Si]}$ of $[\Si]$ is defined to be the smallest integer $\mu_{[\Si]}\in\Z_{\ge 2}$ such that $\mu_{[\Si]}[\Si]$ can be represented by a smoothly embedded connected submanifold.
\end{itemize}
\end{defn}		
The well-definedness of $\mu_{[\Si]}$ is proved in Lemma \ref{lemthom}. 
By the universal coefficient theorem \cite[Theorem 3A.3]{AH}, for any coefficient ring $R$, we have a natural short exact sequence
\begin{align}\label{equct}
	0&\rightarrow H_d(M,\Z)\ts R\overset{\iota}{\rightarrow} H_d(M,R)\rightarrow\operatorname{Tor}(H_{d-1}(M,\Z),R)\to 0.
\end{align} 
\begin{defn}\label{defni}
	The $R$-reduction $[\Si]^R$ of the integral homology class $[\Si]$ is defined to be
	\begin{align*}
		[\Si]^R=\iota([\Si]\ts 1)\in H_d(M,R),
	\end{align*}where $\iota$ is the natural injection in (\ref{equct}) and $1$ is the unit element in $R.$ \end{defn}

	We need a norm $\no{\cdot}^R$ on $R$ in order to measure the area of chains in $R$ homology. 
	\begin{defn}\label{defnnm}
		We say a map $\no{\cdot }^R:R\to \R_{\ge0}$ is a norm on $R$ if $\no{\cdot}^R$ satisfies the following three properties:
		\begin{itemize}
			\item (reversibility) for all $e\in R$, we have
			\begin{align*}
				\no{-e}^R=\no{e}^R,
			\end{align*}
			\item (triangle inequality) for all $e,f\in R$, we have
			\begin{align*}
				\no{e+f}^R\le \no{e}^R+\no{f}^R.
			\end{align*}
			\item (positivity) we have the equivalence of propositions, for all $e\in R,$
			\begin{align*}
				\no{e}^R=0\eq e=0. 
			\end{align*} 
		\end{itemize}
	\end{defn}We want to emphasize that we do not require the usual homogeneity axiom, i.e., $\no{ve}^R=|v|\no{e}^R$ for $e\in R,v\in\Z$ on $R,$ but only use the weaker reversibility axiom. This is an intentional omission in order to measure elements in groups with torsions. 
	
	Recall \cite{WFfc,BWrc,BWdt} that $\no{\cdot}^R$ induces a natural flat distance $\mathcal{F}^R$ and mass norm $\ms^R_g$ on the space of $R$-coefficient polyhedron chains on the Riemannian manifold $(M,g)$. 
	\begin{defn}\label{defnfc}
		The $R$-flat chain $\mathcal{F}_g(M,R)$ in the metric $g$ is the $\mathcal{F}^R$ closure of the set of $R$-coefficient polyhedron chains.
	\end{defn} Intuitively speaking the mass $\ms_g^R$ of a $d$-dimensional chain is just the $d$-dimensional area of a chain counted with multiplicity by $\no{\cdot}^R.$ For instance, by Definition \ref{defnnorm}, $$\ms^{\Z/4\Z}(\pm 2 S^1)=\no{\pm 2}^{\Z/4\Z}\ms^{\Z/4\Z}(S^1)=2\cdot 2\pi.$$
	
	By \cite[Section 4.2.16]{HF}, the finite area $\Z$-coefficient flat chains representing $[\Si]\in H_d(M,\Z)$ are precisely the classical integral currents representing $[\Si]$. As we are concerned with area-minimization, we will use the literature convention of using integral currents to represent integral homology.
	\begin{defn}When the canonical homomorphism $\Z\to R$ is norm non-increasing, define the $R$-reduction $T^R$ of an integral current $T\in[\Si]$ to be
		\begin{align*}
			T^R=\iota(T\ts 1)\in [\Si]^R.
		\end{align*}
	\end{defn}
	Here $\iota$ is the canonical homomorphism in (\ref{equct}), which also exists on the chain level. It is straightforward to verify that $T^R$ is well-defined.
	
	We want to emphasize that the set of $R$-flat chains in $[\Si]^R$ is much larger than the $R$-reduction of integral currents in $[\Si]$. For instance, in $S^4$, where the second homology is trivial, an embedded $\mathbb{RP}^2$ is a mod $2$ cycle, but is not a $\Z/2\Z$-reduction of any integral cycle.
	
	Recall Definition \ref{defnfc}.
	\begin{defn}\label{defnrarea}
		Define the $R$-area $\no{[\Si]}^R_g$ of $[\Si]^R$ to be
		\begin{align}\label{defnra}
			\no{[\Si]}^R_g=\inf_{\substack{T\in\mathcal{F}_g(M,R)\\ T\in[\Si]^R}}\ms_g^R(T).
		\end{align}
		We say an $R$-flat chain $T\in[\Si]^R$ is $R$-area-minimizing in $[\Si]^R$ with respect to metric $g,$ if $T$ achieves the $R$-area of $[\Si]^R$ defined in (\ref{defnra}), i.e.,
		\begin{align*}
			\ms^R_g(T)=\no{[\Si]}^R_g.
		\end{align*} 
	\end{defn}
	Intuitively speaking, $\no{[\Si]}^R_g$ is the infimum of area of all $R$-coefficient polyhedron chains representing the $R$-reduction $[\Si]^R$ of $[\Si]$. By the discussion before Definition \ref{defnrarea}, we have
	\begin{fact}
		The following inequality holds
		\begin{align}\label{eqnormcomp}
			\no{[\Si]}^R_g\le \inf_{T\in[\Si]}\ms_g^{R}(T^R).
		\end{align}
	\end{fact}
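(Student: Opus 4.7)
The plan is to observe that \eqref{eqnormcomp} is essentially a tautology from the definitions: every $R$-reduction $T^R$ of an integral representative $T\in[\Si]$ qualifies as a competitor in the infimum \eqref{defnra} defining $\no{[\Si]}^R_g$, so the infimum over this smaller set of competitors (i.e.\ those coming from reductions of integral currents) is at least as large as $\no{[\Si]}^R_g$. Taking the infimum over $T\in[\Si]$ then gives the inequality. There is no real obstacle, only bookkeeping.

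In more detail, I would carry out two verifications. First, for any integral current $T\in[\Si]$, show that $T^R\in\mathcal{F}_g(M,R)$. Since $T$ arises as an $\mathcal{F}^{\Z}$-limit of integer-coefficient polyhedral chains and the canonical homomorphism $\Z\to R$ is norm non-increasing by hypothesis, the associated coefficient-reduction map on polyhedral chains is continuous from the $\mathcal{F}^{\Z}$-topology into the $\mathcal{F}^{R}$-topology and therefore extends to a map $\mathcal{F}_g(M,\Z)\to\mathcal{F}_g(M,R)$ agreeing with $\iota(\cdot\otimes 1)$ on the chain level; this is precisely the content of the construction of $T^R$ recalled before the statement, whose well-definedness the text has already asserted. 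Second, observe that by the very definition of $[\Si]^R=\iota([\Si]\otimes 1)$ and the functoriality/naturality of $\iota$ on the short exact sequence \eqref{equct}, we have $T^R\in[\Si]^R$ as soon as $T\in[\Si]$.

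With both facts in hand, $T^R$ lies in the set
\[
\{T'\mid T'\in\mathcal{F}_g(M,R),\; T'\in[\Si]^R\}
\]
over which the infimum \eqref{defnra} is taken, so $\no{[\Si]}^R_g\le\ms_g^R(T^R)$ for every such $T$. Taking the infimum over $T\in[\Si]$ on the right-hand side yields \eqref{eqnormcomp}. The only point worth flagging is that the inequality can be strict, since the competitor set on the left of \eqref{eqnormcomp} is strictly larger than the set of $R$-reductions of integral representatives: the $\mathbb{RP}^2\subset S^4$ example from the paragraph preceding Definition \ref{defnrarea} makes this explicit. Understanding precisely when strictness occurs (and what this says about the Hasse principle for area-minimizers) is of course the motivation for the theory that follows, but it is not needed for \eqref{eqnormcomp} itself.
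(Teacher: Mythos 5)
Your argument is correct and is exactly the paper's: the paper derives \eqref{eqnormcomp} from the observation immediately preceding Definition~\ref{defnrarea} that $R$-reductions $T^R$ of integral currents $T\in[\Si]$ form a (typically proper) subset of the competitor set $\{T'\in\mathcal{F}_g(M,R)\mid T'\in[\Si]^R\}$ over which the infimum in \eqref{defnra} is taken, so the infimum over the larger set is no greater. Your two bookkeeping checks (that $T^R\in\mathcal{F}_g(M,R)$ and $T^R\in[\Si]^R$) are the implicit content of the paper's appeal to the well-definedness of $T^R$ under the norm-non-increasing hypothesis.
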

	In this manuscript, we will always use the following classical norm conventions:
	\begin{defn}\label{defnnorm}Define
		\begin{align*}
			\no{\cdot}^{\Z}=&|\cdot|,\\
			\no{\cdot}^{\R}=&|\cdot|,\\
			\no{\cdot}^{\Z/n\Z}=&|\cdot|_{(-\frac{n}{2},\frac{n}{2}]}.
		\end{align*}
	\end{defn}
	Here $|\cdot|$ is the absolute value on $\R$ and $|\cdot|_{(-\frac{n}{2},\frac{n}{2}]}$ means the absolute value of the unique lift of an element in $\Z/n\Z$ to $(-\frac{n}{2},\frac{n}{2}]$. With $\no{\cdot}^R$ defined as above, when $R=\Z,\R$ or $\Z/n\Z$ with $n\in\Z_{\ge 2},$ there always exists at least one area-minimizing $R$-flat chain representative of $[\Si]^R$ \cite{HFrf,WFfc,HF}. 
	
	The following assumption will always be assumed.
	\begin{assump}\label{assumpcoef}
		Set $R=\R$ or $\Z$ or $\Z/n\Z$ for $n\in\Z_{\ge 2}$ and use Definition \ref{defnnorm} for $\no{\cdot}^R.$
	\end{assump}
	It is straightforward to verify that by Definition \ref{defnnorm}, the canonical ring homomorphism $\Z\to R$ is norm non-increasing. Thus, we always have
	\begin{align*}
		\ms^R_g(T^R)\le \ms_g^{\Z}(T).
	\end{align*}
	By (\ref{eqnormcomp}), we deduce that
	\begin{fact}\label{fctncmp}The following inequality always holds
		\begin{align}\label{eqinnorm}
			\no{[\Si]}^R_g\le \no{[\Si]}^{\Z}_g.
		\end{align}
	\end{fact}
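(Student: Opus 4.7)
The plan is to chain together two inequalities that are already available in the text. First, by Definition \ref{defnrarea} applied to the ring $R=\Z$, the $\Z$-area of $[\Si]$ is the infimum of $\ms^\Z_g(T)$ over all $\Z$-flat chains $T$ in $[\Si]^\Z=[\Si]$; by the identification (cited from \cite[Section 4.2.16]{HF}) between finite-mass $\Z$-flat chains and integral currents, we may rewrite
\begin{align*}
\no{[\Si]}^\Z_g=\inf_{T\in[\Si]}\ms^\Z_g(T),
\end{align*}
where the infimum is over integral currents $T$ representing $[\Si]$.

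Second, under Assumption \ref{assumpcoef} and the convention in Definition \ref{defnnorm}, the canonical ring homomorphism $\Z\to R$ is norm non-increasing (this is the sentence immediately preceding the fact). As stated in the inequality just before Fact \ref{fctncmp}, this implies the pointwise bound
\begin{align*}
\ms^R_g(T^R)\le \ms^\Z_g(T)
\end{align*}
for every integral current $T\in[\Si]$. Taking the infimum over $T\in[\Si]$ on both sides yields
\begin{align*}
\inf_{T\in[\Si]}\ms^R_g(T^R)\le \inf_{T\in[\Si]}\ms^\Z_g(T)=\no{[\Si]}^\Z_g.
\end{align*}

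Finally, chaining with the inequality (\ref{eqnormcomp}), which compares the $R$-area to the infimum of masses of $R$-reductions of integral currents, we conclude
\begin{align*}
\no{[\Si]}^R_g\le \inf_{T\in[\Si]}\ms^R_g(T^R)\le \no{[\Si]}^\Z_g,
\end{align*}
which is the desired inequality (\ref{eqinnorm}). There is no real obstacle here: the only subtle point is verifying that the canonical map $\Z\to R$ really is norm non-increasing for each of the three choices $R=\Z,\R,\Z/n\Z$ under Definition \ref{defnnorm}, which amounts to the elementary observations $|v|\le|v|$ for $R=\Z,\R$, and that the representative in $(-n/2,n/2]$ of $v\bmod n$ has absolute value at most $|v|$.
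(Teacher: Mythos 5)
Your proposal is correct and follows essentially the same route as the paper: the paper likewise notes that the canonical homomorphism $\Z\to R$ is norm non-increasing under Definition \ref{defnnorm}, deduces $\ms^R_g(T^R)\le\ms^\Z_g(T)$, and combines this with (\ref{eqnormcomp}) to obtain (\ref{eqinnorm}). Your extra remark verifying the norm non-increasing claim case by case is a helpful spelling-out of the paper's one-line ``straightforward to verify.''
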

	We emphasize again that $R$-reduction of a homology class introduces many more representatives, e.g., unorientable ones for $R=\Z/2\Z$, $Y$-shaped union of three rays starting at the origin for $R=\Z/3\Z,$ the operator of $L^2$ inner product with coclosed differential forms for $R=\R,$ etc. Thus, it is not expected and in fact not true in general that equality in (\ref{eqinnorm}) can hold. For this purpose, it is beneficial to give the following definition
	\begin{defn}
For a subset $S\s H_d(M,\Z),$		define the set of $R$-area-minimizing representatives $\mn_g^R\left(S\right)$ of $S^R$ to be
		\begin{align*}
			\mn_g^R\left(S\right)=\{T|T\in \mathcal{F}_g(M,R),[T]\in S^R,\ms^R_g(T)=\inf_{[\Gamma]\in S^R}\no{[\Gamma]}^R_g\}.
		\end{align*} 
	\end{defn}When (\ref{eqinnorm}) is strict inequality, we may have
	\begin{align}\label{eqesint}
		\mn_g^R\left([\Si]\right)\cap \left(\mn_g^{\Z}\left([\Si]\right)\right)^R=\es.
	\end{align} 
 Recall Assumption \ref{assumpm}, Definition \ref{defntm} and Definition \ref{defni}. We will be ready to state the more detailed version of Result \ref{rstm}.\begin{thm}\label{thmv}
		For all $[\Si]\in H_d(M,\Z)$, there exists a positive integer $N_{[\Si],g}$, and an open set $\Om_{[\Si],g}$ containing $g$ in the space of Riemannian metrics, both depending only on $[\Si]$ and $g$, such that for all metrics $h\in\Om_{[\Si],g},$ all $n\ge N_{[\Si],g},$ with $\tau_d|n$    
		\begin{itemize}
			\item equality holds in (\ref{eqinnorm}), i.e., $			\no{[\Si]}_h^{\Z/n\Z}=\no{[\Si]}_h^\Z,$
			\item the $\Z/n\Z$-reduction restricted to $\mn_h^{\Z}\left([\Si]\right)$ is a one to one map from  		$\mn_h^{\Z}\left([\Si]\right)$ to $\mn_h^{\Z/n\Z}\left([\Si]\right)$.	
		\end{itemize} 
	\end{thm}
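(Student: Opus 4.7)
The plan is to prove both bullets simultaneously by contradiction, with the core ingredient a uniform multiplicity bound on area-minimizing representatives of $[\Si]$ for metrics near $g$.

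\textbf{Step 1 (uniform bounds).} First I would produce a neighborhood $\Om_{[\Si],g}$ of $g$ in the $C^2$-topology and constants $M_0,M_1$ such that for every $h\in\Om_{[\Si],g}$, every integral minimizer $T\in\mn_h^\Z([\Si])$ has $\ms_h^\Z(T)\le M_0$ and pointwise multiplicity $\le M_1$, with the same bounds holding for every $\Z/n\Z$-minimizer once $n$ is sufficiently large. The mass bound is immediate by comparing $h$-mass and $g$-mass of a fixed smooth integral representative of $[\Si]$. The multiplicity bound follows from Almgren's monotonicity formula on $(M,h)$---whose constants are controlled uniformly on $\Om_{[\Si],g}$---together with regularity theory for $\Z/n\Z$-flat chains, both of which promote a uniform mass/density bound to a uniform pointwise multiplicity bound.

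\textbf{Step 2 (lifting).} Suppose the first bullet fails: there exist $h_k\to g$, $n_k\to\infty$ with $\tau_d\mid n_k$, and $T_k\in\mn_{h_k}^{\Z/n_k\Z}([\Si])$ with $\ms_{h_k}^{\Z/n_k\Z}(T_k)<\no{[\Si]}_{h_k}^\Z$. Lift the coefficients of $T_k$ to the symmetric interval $(-n_k/2,n_k/2]$ (Definition \ref{defnnorm} is tailored for this) to obtain an integral rectifiable chain $\tilde T_k$ with $\ms_{h_k}^\Z(\tilde T_k)=\ms_{h_k}^{\Z/n_k\Z}(T_k)$. Since the multiplicities of $T_k$ are $\le M_1$ and $n_k>2M_1$, the mod-$n_k$ cycle condition on $T_k$ upgrades to $\pd\tilde T_k=0$: the boundary is an integer chain whose mod-$n_k$ reduction vanishes and whose multiplicities are bounded by $2M_1<n_k$, hence zero.

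\textbf{Step 3 (homology and contradiction).} Using $\tau_d\mid n_k$ and the splitting (\ref{decomph}), the map $\iota$ in (\ref{equct}) is injective, so $[\tilde T_k]^{\Z/n_k\Z}=[\Si]^{\Z/n_k\Z}$ forces $[\Si]-[\tilde T_k]\in n_k\cdot\Z^{b_d}\subset H_d(M,\Z)$ (the torsion piece of the difference being annihilated by $\tau_d\mid n_k$). Pairing with a fixed basis of $H^d_{dR}(M;\R)$ expresses the free coordinates of $[\tilde T_k]$ as bounded linear functionals of $\ms_{h_k}^\Z(\tilde T_k)\le M_0$, so for $n_k> N_{[\Si],g}$ one obtains $[\tilde T_k]=[\Si]$, making $\tilde T_k$ an integral competitor strictly better than $\no{[\Si]}_{h_k}^\Z$, a contradiction. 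The second bullet is then routine: surjectivity of the reduction map is the lift of Step 2 applied to any $T\in\mn_h^{\Z/n\Z}([\Si])$ combined with the now-established equality of norms; injectivity holds because two integral minimizers with the same mod-$n$ reduction differ by an integer cycle whose coefficients lie in $n\Z\cap[-2M_1,2M_1]=\{0\}$.

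\textbf{Main obstacle.} The delicate point is Step 1, specifically securing the multiplicity bound $M_1$ \emph{uniformly in $n$} for the mod-$n$ minimizers $T_k$. A mass bound alone does not prevent multiplicities from growing with $n$, and one cannot apply integral regularity directly to a mod-$n$ object. I would overcome this by (i) invoking monotonicity in $(M,h_k)$ to convert the $h_k$-mass bound $M_0$ into a uniform density bound $\Ta_d(T_k,x)\le C$ with $C$ independent of $k$, and (ii) using the regularity theory for $\Z/n\Z$-area-minimizing flat chains to pass from density to pointwise multiplicity. This is where the $n$-independence is genuinely earned and where the specific structure of the coefficient rings in Assumption \ref{assumpcoef} is used.
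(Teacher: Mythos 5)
Your outline has the correct skeleton---uniform density bound from the monotonicity formula across metrics near $g$, lifting the mod-$n$ minimizer to an integral chain via Federer's representative modulo $n$, and pinning down the homology class via $H_d(M,\R)$---and these are indeed the three pillars of the paper's proof. Your Step 3 is essentially the paper's Section~\ref{subsectiona} with the Banach norm $\no{\cdot}_h^{\R}$ replaced by direct pairing with de Rham cohomology, which is a harmless change.

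However, Step 2 contains a genuine gap: the claim that $\pd\tilde T_k=0$ follows because ``the boundary is an integer chain whose mod-$n_k$ reduction vanishes and whose multiplicities are bounded by $2M_1<n_k$.'' The bound $2M_1$ on the boundary multiplicity is not justified, and in fact is what would fail without further regularity input. The boundary of Federer's representative is supported on the singular set of $T_k$, and at a point in the $(d-1)$-th stratum of the Almgren stratification---where the tangent cone is $\R^{d-1}$ times a $1$-dimensional $\Z/n_k\Z$-length-minimizing cone---many sheets of $\tilde T_k$ meet, so the boundary multiplicity can a priori be of order $n_k$, not $2M_1$. Moreover $\pd \tilde T_k$ is only a flat chain; without rectifiability a pointwise ``multiplicity'' is not even defined. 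The paper closes the gap by a dimension argument rather than a multiplicity argument. First, Lemma~\ref{lemdv} shows that the density bound $\ta_{T_k}<n_k/2$ forces $\spt\pd\tilde T_k\s\sing T_k$. Second, Lemma~\ref{lemsingt} shows $\dimh \sing T_k\le d-2$: any $1$-dimensional $\Z/n\Z$-length-minimizing cone is a union of at least $n$ rays counted with multiplicity, hence has density at least $n/2$, so the $(d-1)$-th stratum is empty (and the $d$-th stratum is handled by \cite{DHMS}). Then a $(d-1)$-dimensional $\Z$-flat chain supported in a set of $(d-1)$-Hausdorff measure zero vanishes by \cite[Section 4.1.20]{HF}, giving $\pd\tilde T_k=0$. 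So the input you need is not a pointwise multiplicity bound on the boundary, but a Hausdorff-dimension bound on the singular set of the mod-$n$ minimizer, and that requires the structure theory of $\Z/n\Z$-length-minimizing cones. Your injectivity argument for the second bullet has a related gap: you need the constancy theorem to decompose two integral minimizers with the same mod-$n$ reduction into countably many pairwise-disjoint oriented pieces before you can compare their coefficients componentwise, as in the paper's final lemma.
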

	Here the inverse to $\Z/n\Z$-reduction restricted to $\mn_g^{\Z}\left([\Si]\right)$-reduction equals Federer's ``representative" modulo $n$ \cite[Section 4.2.26]{HF}. Unfortunately, Federer used the word ``representative" in a totally different way than we did, and the reader should not confuse Federer's ``representative" with our use of representatives of homology classes. The reader might wonder what happens when $\tau_d\not|n.$ 
	Using the same notations as in Theorem \ref{thmv}, we have the following 	\begin{thm}\label{thmvg} For all metrics $h\in\Om_{[\Si],g},$ all $n\ge N_{[\Si],g},$ with $\tau_d\not|n$, we have
		\begin{align*}
		\left(\mn_h^{\Z}\big([\Si]+nG_d(M,\Z)\big)
		\right)^{\Z/n\Z}=\mn_h^{\Z/n\Z}([\Si]),
		\end{align*} and the $\Z/n\Z$-reduction is a bijection from $\mn_h^{\Z}\big([\Si]+nG_d(M,\Z)\big)$ to $\mn_h^{\Z/n\Z}([\Si])$.
	\end{thm}
	Note that  by \cite[Section 3.2 p. 215 property (5)]{AH}, $nG_d(M,\Z)$ is the kernel of $\Z/n\Z$-reduction when restricted to $G_d(M,\Z)$
	Theorem \ref{thmvg} means we can no longer recover $\mn_h^{\Z}([\Si])$ when $\tau_d\not|n.$   
	For example, when $n$ is a large prime number, then $nG_d(M,\Z)=G_d(M,\Z)$, and if some element of $[\Si]+G_d(M,\Z)$ have smaller $\Z$-area than $\no{[\Si]}_h^{\Z}$ then we can never recover  $\mn_h^{\Z}([\Si])$ from $\no{[\Si]}_h^{\Z/n\Z}$ for such $n.$
	
	Though the statement of Theorem \ref{thmv} and Theorem \ref{thmvg} only describes information from $\Z/n\Z$ homologies, like the proof of the Hasse-Minkowski theorem, information obtained from both real homology and $\Z/n\Z$ homologies is indispensable to the proof of Theorem \ref{thmv} . 
	
	State of the art regularity proved in \cite{LScy,DMS1,DMS2,DMS3,DMSmodq} by De Lellis and his collaborators  shows that the singular set of $T$ is of Hausdorff dimension
	\begin{align*}
		\begin{cases}
			(d-2)-\textnormal{rectifiable, if }T\textnormal{ is area-minimizing in integral homology},\\
			(d-1)-\textnormal{rectifiable, if }T\textnormal{ is area-minimizing in }\Z/n\Z \textnormal{ homology}.
		\end{cases}
	\end{align*}See also \cite{BK1,BK2,BK3} for an independent proof of the first case.
	There is a $1$-dimension gap between these two results. In the case of $c=1,$ the singular sets of area-minimizing representatives of integral homology classes have Hausdorff dimension at most $(d-7)$ \cite{NV}, and the gap is increased to $6$ dimensions in general. The dimension gap in regularity cannot be overcome, as the regularity results above are sharp. However, as a corollary of Theorem \ref{thmv}, we can prove that
	\begin{cor}\label{coras}
		Area-minimizing representatives of $[\Si]^{\Z/n\Z}$ are $\Z/n\Z$-reductions of area-minimizing integral currents for $n$ large enough, thus enjoying much better regularity than general $\Z/n\Z$ area-minimizing representatives.
	\end{cor}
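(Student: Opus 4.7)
The plan is to read off Corollary \ref{coras} from the two bullets of Theorem \ref{thmv} applied at the given metric $g$, with the one nontrivial step being to upgrade the norm equality to a pointwise agreement between a $\Z/n\Z$-minimizer and the integral minimizer that reduces to it.

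First I would fix $n \ge N_{[\Si],g}$ with $\tau_d \mid n$ and pick an arbitrary $S \in \mn_g^{\Z/n\Z}([\Si])$. The second bullet of Theorem \ref{thmv}, together with the inverse map furnished by Federer's ``representative modulo $n$'' construction quoted right after the theorem, produces a unique $T \in \mn_g^{\Z}([\Si])$ with $T^{\Z/n\Z}=S$. The entire task then reduces to showing that $T$ and $S$ have the same support and pointwise density, so that any regular point of $T$ is automatically a regular point of $S$ in the $\Z/n\Z$-sense.

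To achieve this, I would apply the rectifiability structure theorem to write $T=\sum_i m_i\cu{R_i}$ on pairwise disjoint rectifiable pieces $R_i$ with $m_i\in\Z\setminus\{0\}$; then $S=\sum_i(m_i\bmod n)\cu{R_i}$ and Definition \ref{defnnorm} gives
\begin{align*}
\ms_g^{\Z/n\Z}(S)=\sum_i\no{m_i\bmod n}^{\Z/n\Z}\hm^d(R_i)\le\sum_i|m_i|\hm^d(R_i)=\ms_g^{\Z}(T).
\end{align*}
The first bullet of Theorem \ref{thmv} collapses the two outer terms into the common value $\no{[\Si]}_g^{\Z}$, so every termwise inequality $\no{m_i\bmod n}^{\Z/n\Z}\le|m_i|$ must be an equality. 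Because the norm on $\Z/n\Z$ is defined via the minimal lift to $(-n/2,n/2]$, this equality forces $|m_i|<n/2$ for every $i$, so no wrap-around occurs and $T$, $S$ agree as Radon measures with multiplicities.

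Finally, scaling and localization commute with $\Z/n\Z$-reduction, so every tangent cone of $S$ at a point $p\in\spt(S)$ is the reduction of a tangent cone of $T$ at $p$, and the same mass-identity argument propagates at every scale. Hence regular points of $T$ in the Allard--Almgren sense are automatically regular for $S$, and the sharp integral regularity of \cite{LScy,NV,DMS1,DMS2,DMS3,BK1,BK2,BK3} yields $\dimh\sing S\le d-2$ in general and $\dimh\sing S\le d-7$ when $c=1$. The only substantive obstacle in this pipeline is Theorem \ref{thmv} itself; once that is in hand, the mass-preservation bookkeeping above is entirely routine.
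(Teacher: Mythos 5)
Your proposal deduces Corollary \ref{coras} from Theorem \ref{thmv}, but this inverts the paper's logical order and is in fact circular: in the paper, the corollary is not a downstream consequence of Theorem \ref{thmv} but a \emph{prerequisite} for it. Concretely, Section 3 opens (Subsection 3.1) by taking a $\Z/n\Z$-minimizer $T$, choosing a Federer ``representative'' modulo $n$, call it $S$, and then proving via Lemma \ref{lemdst} (the density bound $\ta_T<n/2$), Lemma \ref{lemdv} ($\spt\pd S\subset\sing T$ once the density is below $n/2$), and Lemma \ref{lemsingt} (emptiness of the $(d-1)$-stratum, hence $\dimh\sing T\le d-2$) that $\pd S=0$. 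This is Fact \ref{fctfrmn}, which the paper explicitly identifies as the corollary, supplemented by Fact \ref{fctsm} for area-minimality. Only then, in Subsections 3.2--3.3, does the paper pin down the homology class of $S$ and prove both bullets of Theorem \ref{thmv}. So the bijectivity of $\Z/n\Z$-reduction on minimizers that you invoke is established in the paper \emph{using} the fact that Federer's representative is an integral cycle, i.e. using Corollary \ref{coras} itself.

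There is a second, smaller issue: your ``one nontrivial step'' (the mass bookkeeping showing $|m_i|$ fits in $(-n/2,n/2]$) is not where the difficulty actually lies, because Federer's representative by construction (Fact \ref{fctrmn}) has $\Z$-mass measure identical to the $\Z/n\Z$-mass measure of $T$; the multiplicity coincidence comes for free. The genuine content of the corollary is that this representative is a \emph{cycle}, and that requires the dimension-reduction argument on $\sing T$, which your proposal does not touch. If you want a self-contained proof, you need to reproduce some version of Lemmas \ref{lemdst}, \ref{lemdv}, and \ref{lemsingt}; deferring entirely to Theorem \ref{thmv} leaves a circular dependency.
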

This  generalize Frank Morgan's result \cite{FMmmn} into the compact setting. 

	One can ask what information about area-minimizing representatives in integral homology is lost in $\R$ and $\Z/n\Z$ homology for any finite sets of $n$. To this end, we have the following theorem, which shows the sharpness of Theorem \ref{thmv} and the impossibility of finding a uniform upper bound for $N_{[\Si],g}$ in Theorem \ref{thmv}. Thus, to recover the $\Z$-area-minimizing representative of $[\Si]$, we do need information from real homology and all $\Z/n\Z$ homologies, much like the role of real and all mod prime power solutions in the Hasse-Minkowski theorem. Recall Definition \ref{defnhom}.
	\begin{thm}\label{thmvn}
		If $d\ge 1,c\ge 2,$ $M$ is orientable, and $[\Si]$ a non-torsion class, then for any real number $\rh>1$ and all integers $n\ge 2$ with $\operatorname{gcd}\left(n,\mu_{[\Si]}\right)=1,$  there exists a non-empty open set $\Om'_{[\Si],n,\rh}$ in the space of Riemannian metrics, depending on $[\Si],n,$ and $\rh,$ such that for all metrics $h\in\Om'_{[\Si],n,\rh}$ we have
		\begin{align}
			\label{eqzv}	{\no{[\Si]}^{\Z}_h}>& \rh{\no{[\Si]}^{\Z/n\Z}_h},\\
			\label{eqzr}	{\no{[\Si]}^{\Z}_h}>&\rh{\no{[\Si]}^{\R}_h},\\
			\label{eqzz}	{\no{[\Si]}^{\Z}_h}>& \rh{\no{\mu_{[\Si]}[\Si]}^{\Z}_h}.
		\end{align}
	\end{thm}
	We want to remark that if $\operatorname{gcd}(\tau_d,\mu_{[\Si]})=1,$ e.g., $\tau_d=1,$ then we can always find  $\operatorname{gcd}\left(n,\mu_{[\Si]}\right)=1$ and $\tau_d|n,$ so that Theorem \ref{thmvn} genuinely shows the sharpness of Theorem \ref{thmv}. 
	
In the setting of Plateau problems in Euclidean space, Frank Morgan \cite{FMmc}, Brian White \cite[Problem 1.13]{GMT} have all raised the following question
\begin{quote}
	Is it possible to fill in a multiple of a boundary with arbitrarily small area compared to the area of the original filling of the boundary?
\end{quote}
Robert Young \cite{RY} showed that for any fixed multiple, this is impossible. By (\ref{eqzz}), we show that the opposite is true in the setting of area-minimizing representatives on compact oriented manifolds of codimension at least $2$.

An extension of (\ref{eqzr}) gives the following theorem\begin{thm}\label{thmnc}
If $c\ge 2$ and $M$ is orientable, then for any $[\Si]\not=0,$ there exists a non-empty open set $\Omega_{[\Si],\R}$ in the space of Riemannian metrics, such that for any metric $h\in \Omega_{[\Si],\R}$ and any $T\in\mn_h^\Z([\Si])$, there is no weakly closed measurable $d$-form calibrating $T$.\end{thm}
	\subsection{Euclidean setting}\label{secmtb}The Plateau problem in Euclidean space is about finding the existence of $d$-dimensional $R$-flat chains of minimal possible $R$-mass filling a given boundary supported in a set $W$. The most natural setting for solving Plateau problems is finding $R$-area minimizing representatives of relative homology classes in $H_d(\R^{d+c},W,R)$. Thus, we propose the following assumption. 
\begin{assump}\label{assumpmb}	Assume that $c,d\in\Z_{\ge1}$ and
	\begin{enumerate}
	\item $W$ is either a $(d-1)$-dimensional compact smooth submanifold or a   compact $(d-1)$-dimensional subanalytic set of $M=\R^{d+c}$,\label{assumpmbr}
	\item $[\Si]\in H_d(\R^{d+c},W,\Z)$ is a $d$-dimensional relative integral homology class with $d\ge 2$.
	\end{enumerate} \end{assump}
Here $W$ is allowed to have multiple connected components. All discussions in this subsection is based on Assumption \ref{assumpmb}.

The theory of $R$-flat chains discussed in Section \ref{secmtc} can be developed in Euclidean space as well. If the $W$ is  well-behaved, i.e., Lipschitz neighborhood retract \cite[Sections 4.4.1, 4.4.5, 4.4.6]{HF}\cite[Section 3.4]{HFrf}, then the homology of $R$-flat chains with boundaries supported in $W$ is canonically isomorphic to relative homology $H_d(\R^{d+c},W,R)$ and the above interpretation can be realized literally. 

Unfortunately, the relative set $W$ we need to deal with in general are not Lipschitz neighborhood retracts, e.g., branched minimal surfaces in unit spheres. Our way to salvage this technicality is to reformulate the setting using the long exact sequence. Recall the long exact sequence for relative homology \cite[Theorem 2.16]{AH}:
\begin{align*}
	\cdots\to H_d(\R^{d+c},R)\to H_d(\R^{d+c},W,R)\xrightarrow{\pd} H_{d-1}(W,R)\to H_{d-1}(\R^{d+c},R)\to\cdots,
\end{align*}where $\pd$ is the boundary map of $R$-chains.
When $d\ge 2,$ we deduce that
\begin{align}\label{eqisohom}
	H_d(\R^{d+c},W,R)\overset{\pd}{\cong} H_{d-1}(W,R).
\end{align}
For subanalytic $W$, De Pauw and Hardt  \cite[Section 5.3]{DHlii} proved that the $(d-1)$-dimensional homology of $R$-flat chains supported in $W$ are canonically isomorphic to the simplicial homology of $W$ in $R$ for $R=\Z$ and $\Z/n\Z$ with $n\in\Z_{\ge2}$. For $R=\R$, we will prove in Lemma \ref{fcthomr} that the same holds.

Thus, it makes sense to speak to speak of $(d-1)$-dimensional $R$-flat chains representing homology classes in $H_{d-1}(W,R),$ which by (\ref{eqisohom}) gives us a way to discuss $R$-flat chains representing relative homology classes in $H_d(\R^{d+c},W,R)$. We use the following convention
\begin{defn}\label{defntsw}
Assume $d\ge 2$. For $d$-dimensional $R$-flat chains $T$ with $\pd T$ supported in $W,$ we define
\begin{align*}
	[T]=\pd\m[\pd T]\in H_d(\R^{d+c},W,R). 
\end{align*}
\end{defn}
Let us make two remarks about our definition. First, an $R$-flat chain $T$ can represent a relative homology class in $H_d(\R^{d+c},W,R)$ if and only if $\pd T$ represents a homology class in $H_{d-1}(W,R).$ Second for two $R$-flat chains by definition we have
\begin{align*}
	[T]=[S]\textnormal{ if and only if }[\pd T]=[\pd S].
\end{align*}  Thus, now we can legitimately speak of $R$-flat chains representing relative homology classes of $H_d(\R^{d+c},W,R)$ when $d\ge 2.$

Introduce the following definition.
	\begin{defn}\label{defnrareab}
	For $d\ge 2,c\ge1$ define the $R$-area $\no{[\Si]}^R_\de$ of $[\Si]^R$ to be
	\begin{align}\label{defnrab}
		\no{[\Si]}^R_\de=\inf_{\substack{T\in\mathcal{F}_\de(\R^{d+c},R)\\ T\in[\Si]^R}}\ms_\de^R(T).
	\end{align}
	We say an $R$-flat chain $T\in[\Si]^R$ is $R$-area-minimizing in $[\Si]^R$ with respect to metric $g,$ if $T$ achieves the $R$-area of $[\Si]^R$ defined in (\ref{defnra}), i.e.,
	\begin{align*}
		\ms^R_\de(T)=\no{[\Si]}^R_\de.
	\end{align*} 
\end{defn} 
Here $[\Si]^R$ is the $R$-reduction map $i([\Si]\ts R)$ in the universal coefficient theorem in relative homology \cite[Corollary 3A.4]{AH}:
\begin{align}
	0&\rightarrow H_d(\R^{d+c},W,\Z)\ts R\overset{\iota}{\rightarrow} H_d(\R^{d+c},W,R)\rightarrow\operatorname{Tor}(H_{d-1}(\R^{d+c},W,\Z),R)\to 0.
\end{align}  and $\de$ is the standard flat metric on $\R^{d+c}$. We still have \begin{fact}\label{fctncmpb}The following inequality always holds
\begin{align}\label{eqinnorm}
	\no{[\Si]}^R_g\le \no{[\Si]}^{\Z}_g.
\end{align}\end{fact}
Again, we define
	\begin{defn}\label{defnpf}
	For a subset $S\s H_d(\R^{d+c},W,\Z),$		define the set of $R$-area-minimizing representatives $\mn_\de^R\left(S\right)$ of $S^R$ to be
	\begin{align*}
		\mn_\de^R\left(S\right)=\{T|T\in \mathcal{F}_\de(\R^{d+c},R),[T]\in S^R,\ms^R_\de(T)=\inf_{[\Gamma]\in S^R}\no{[\Gamma]}^R_\de\}.
	\end{align*} 
\end{defn}Our main theorem in the Euclidean setting is as follows
\begin{thm}\label{thmvb}
	For all $[\Si]\in H_d(\R^{d+c},W,\Z)$, there exists a positive integer $N_{[\Si],\de}$, depending only on $[\Si]$, such that for all $n\ge N_{[\Si],\de},$
	\begin{itemize}
		\item we have $			\no{[\Si]}_\de^{\Z/n\Z}=\no{[\Si]}_\de^\Z,$
		\item the $\Z/n\Z$-reduction restricted to $\mn_\de^{\Z}\left([\Si]\right)$ is a one to one map from  		$\mn_h^{\Z}\left([\Si]\right)$ to $\mn_\de^{\Z/n\Z}\left([\Si]\right)$.	
	\end{itemize} 
\end{thm}
We will prove in Lemma \ref{fcthomr}  that $H_d(\R^{d+c},W,\Z)$ is a free abelian group. Thus, \ref{thmvb} is a genuine analogue of Theorem \ref{thmv} with $\tau_d=1.$

Here the inverse to $\Z/n\Z$-reduction restricted to $\mn_\de^{\Z}\left([\Si]\right)$-reduction again equals Federer's ``representative" modulo $n$ \cite[Section 4.2.26]{HF}. The precise statement for Corollary  \ref{rstc} and Result  \ref{thmp} is as follows
\begin{thm}\label{thmcm}
	Assume that $d\ge 1,c\ge 1$, and $B$ is an integral current cycle, i.e., $\pd B=0,$ supported in $W,$ then exists $N_{B}\ge 0$ so that for all integers $n\ge N_B,$ the $\Z/n\Z$-reduction of an $\Z$-area-minimizing integral current $T$ with boundary $B$  is $\Z/n\Z$-area-minimizing.
\end{thm}
Theorem \ref{thmcm} is a direct corollary of Theorem \ref{thmvb} when $d\ge 2$. A separate proof will be given in Section \ref{secd=1} for the case of $d=1$, as $H_1(\R^{d+c},W,R)\not\cong H_0(W,R)$ in general. 

The precise statement of Result  \ref{rstcp} is as follows. 
\begin{thm}\label{thmcpm}
Assume that $P,Q$ are oriented $d$-dimensional subspaces of $\R^{d+c}$, with $d\ge 2,c\ge 1,n\ge4.$ Then  $P^{\Z/n\Z}+Q^{\Z/n\Z}$ is $\Z/n\Z$ area-minimizing if and only if $P+Q$ is $\Z$-area-minimizing.
\end{thm}
It turns out Theorem \ref{thmcpm} reduces to a more specific case as follows. We need a refinement of Assumption \ref{assumpmb}.
\begin{assump}\label{assumpplane}
	For $c=d\ge 2,$ assume that $P,Q$ are two oriented $d$-dimensional planes intersecting only at the origin in $\R^{d+c}$. 
\end{assump}
We will show later that Theorem  \ref{thmcpm} follows from the following lemma
\begin{lem}\label{lemcpm}
	For $n\ge 4$, the $\Z/n\Z$-flat chain $P^{\Z/n\Z}+Q^{\Z/n\Z}$ is $\Z/n\Z$-area-minimizing if and only if $P+Q$ is $\Z$-area-minimizing.
\end{lem}
	\subsection{Product setting}\label{secprod}
We propose an alternative interpretation of our main theorems:
	\begin{quote}
		\emph{We are quantifying the effect of $R$-reduction in the universal coefficient theorem on area-minimizing representatives.}
	\end{quote} Theorem \ref{thmv} says that for integers $n$ large enough and divisible by $\tau_d$, the $\Z/n\Z$-reduction sends $\Z$-area-minimizing representatives bijectively to $\Z/n\Z$-area-minimizing representatives, while Theorem \ref{thmvn} shows that in non-empty open sets of metrics $R$-reduction fails drastically at sending $\Z$-area-minimizing representatives to $R$-area-minimizing representatives for $R=\R$ and $R=\Z/n\Z$ with $n\in\Z_{\ge 2}.$ 
	
	Thus the following problem provides a natural framework for our results.
	\begin{prob}
		For any natural homomorphism among homology groups, quantify the effect of the homomorphism on area-minimizing representatives.
	\end{prob}
	To give another illustration of the above problem beyond our setting of the universal coefficient theorem, we will quantify the effect of K\"{u}nneth formula on the homology of products. First, let us introduce another manifold.
	\begin{assump}\label{assumpmp}
		Assumption \ref{assumpm} holds with another set of objects $M',d',c',[\Si'],g'$ in place of $M,d,c,[\Si],g$.
	\end{assump}
	By the the K\"{u}nneth formula \cite[Theorem 3B.6]{AH}, for every natural number $l\in \N,$ we have,
	\begin{align}\label{eqkf}
		0\to \bigoplus_{i\in\N}H_i(M,\Z)\ts H_{l-i}(M',\Z)\overset{\mathfrak{i}}{\rightarrow} H_l(M\times M',\Z)\to\bigoplus_{i\in\N}\operatorname{Tor}\left(H_i(M,\Z),H_{l-i-1}(M',\Z)\right)\to0,
	\end{align}
	and when $R$ is a field \cite[Corollary 3B.7]{AH}, e.g., $R=\R$ or $R=\Z/p\Z$ with $p$ a prime number,
	\begin{align}\label{eqkff}
		0\to \bigoplus_{i\in\N}H_i(M,R)\ts H_{l-i}(M',R)\overset{\mathfrak{i}}{\rightarrow} H_l(M\times M',R)\to0.
	\end{align}
	\begin{defn}
		Define
		\begin{align}\label{defncrsp}
			[\Si]\times[\Si']=\mathfrak{i}\left([\Si]\ts [\Si']\right).
		\end{align}
	\end{defn}
	On the chain level, the tensor product on the left hand side of (\ref{eqkf}) and (\ref{eqkff}) are induced by cross products of chains \cite[Section 3.B]{AH}. Thus, the product defined in (\ref{defncrsp}) corresponds to the Cartesian product of $R$-flat chains for $R=\Z,\R$ or $\Z/p\Z$ with $p$ a prime number. 
	
	As there are many representatives of $[\Si]\times[\Si']$ which do not come from Cartesian products of cycles, we have, \begin{fact}
		The following inequality holds,
		\begin{align}\label{ineqprod}
			\no{[\Si]}^R_{g}\times \no{[\Si']}^R_{g'}\ge 
			\no{[\Si]\times [\Si']}^R_{g\times g'}.
		\end{align}
	\end{fact}
	In the positive direction, equality in (\ref{ineqprod}) holds in many well-known cases. 
	
	When $R=\R,$ by \cite[Theorem 5.2]{FMea}, we have,
	\begin{fact}\label{fctfm}
		If $d\le 2,$ or $c\le 2,$ or both $[\Si],[\Si']$ are of dimension $3$ and codimension $3,$ then we have
		$\no{[\Si]}^{\R}_{g}\times \no{[\Si']}^{\R}_{g'}=
		\no{[\Si]\times [\Si']}^{\R}_{g\times g'},
		$\end{fact}
	It is an open problem whether the above fact holds unconditionally for all dimensions and codimensions. See \cite{RZcf} for recent progress.
	
	Federer \cite{HFrf} proved that codimension one $\Z$-area-minimizing representatives on orientable ambient manifolds are also $\R$-area-minimizing representatives. Thus, by Fact \ref{fctfm} and properties of codimension $1$ differential forms, we have that,
	\begin{fact}\label{fctfedco}
		When both $[\Si],[\Si']$ are of codimension $1$ and both $M,M'$ are orientable,
		\begin{align}\label{eqprodz}
			\no{[\Si]}^{\Z}_{g}\times \no{[\Si']}^{\Z}_{g'}=		\no{[\Si]\times [\Si']}^{\Z}_{g\times g'}.
		\end{align}
	\end{fact}  
	In other words, the Cartesian product of $\Z$-area-minimizing hypersurfaces on orientable manifolds are $\Z$-area-minimizing on the Riemannian product of ambient manifolds. Tracing through the proof, one can show that Fact \ref{fctfedco} holds for successive products of more than two homology classes as well.
	
	Examples like products of K\"{a}hler subvarieties in products of K\"{a}hler manifolds \cite{HL} show that (\ref{eqprodz}) hold in other codimensions in natural settings. By \cite{FMcalv} and \cite[Lemma 2.11]{ZLa}, holomorphic subvarieties of complex flat tori show that (\ref{eqprodz}) holds with $\Z$ replaced by $\Z/n\Z$ for $n\in\Z_{\ge 2}$ in natural settings as well. 
	
	On the other hand, for Plateau problems in Euclidean space, \cite{FMmc,BWmc} provide examples where Cartesian products of solutions are not solutions to the product boundary conditions. As another byproduct of Theorem \ref{thmvn}, we prove that products of area-minimizing submanifolds are not generically area-minimizing, thus quantifying the failure of the K\"{u}nneth formula at sending $\Z$-area-minimizing representatives to $\Z$-area-minimizing representatives. 
	
	Recall Assumptions \ref{assumpm} and \ref{assumpmp}. First of all, if $[\Si]$ is a torsion class, then in many cases, $[\Si]'$ will annihilate $[\Si]$ in their Cartesian product, i.e., $[\Si]\times[\Si']=0$. To be precise, if $\ka[\Si]=0$, then $[\Si]\times[\Si']=0$ if $[\Si']=\ka[\Si'']$ for another class $[\Si'']$. In these cases, it holds trivially that the Cartesian products fails to send $\Z$-area-minimizing representatives to $\Z$-area-minimizing representatives. The same reasoning holds for $[\Si'].$ Thus, we will only consider the case of both factors being non-torsion classes. 
	\begin{thm}\label{thmkun}
		If $d\ge 1,c\ge 2$, $[\Si]$ is a non-torsion class on an orientable $M$, then for any real number $\ka>1,$ any metric $g'$ on $M'$, there exists a non-empty open set $\Om_{[\Si],g',\ka}$ in the space of Riemannian metrics on $M$, a non-empty open set $\Om_{[\Si],g',\ka}'\ni g'$ in the space of Riemannian metrics on $M'$, and  an integer $\nu_{[\Si],g',\ka}\ge 2$, such that for any metrics $h\in \Om_{[\Si],g',\ka}$, $h'\in \Om_{[\Si],g',\ka}'$ and any non-torsion homology class $[\si']\in \nu H_m(M',\Z)$ with  any integers $\nu\ge \nu_{[\Si],g',\ka},$ $0\le m\le d'+c',$ we have,
		\begin{align}
			\no{[\Si]}_h^{\Z}\times \no{[\si']}_{h'}^{\Z}> \ka \no{[\Si]\times[\si']}_{h\times h'}^{\Z}.
		\end{align}
	\end{thm}
	We want to emphasize that in the above statement $[\si']$ can be any non-torsion class of any dimension on $M'$.	It is an open problem \cite{FMmc} whether the above can hold among $R$-flat chains with $R=\R$ or $\Z/p\Z$ with $p$ a prime number.
	\section{Basic Definitions and Technical Preliminaries}\label{secbd}
	We will give some basic definitions and technical preliminaries in this section. The first four subsections are more or less standard definitions, and the experienced reader can skip them. The deeper technical preliminaries are in the remaining subsections.
	\subsection{Basics of Homology}
	Recall Assumption \ref{assumpm} and Assumption \ref{assumpmb}. We regard homology groups $H_\ast(M,\Z)$ as the relative homology groups $H_\ast(M,\es,\Z).$ 
	
We will later show in Lemma \ref{fcthomr} that $H_d(M,W,\Z)$ is always finitely generated. 
Thus, we have as a splitting up to choices of basis,
\begin{align}\label{decomphb}
	H_d(M,W,\Z)=\Z^{b_d}\oplus\bigoplus_{i\in I}\Z/p_i^{\nu_i}\Z,
\end{align}where $b_d$ is rank of $H_d(M,W,\Z)$, $I$ is a finite set of indices, the $p_i$ are not necessarily distinct prime numbers and $\nu_i\in \Z_{\ge 1}$. Again, by \cite[Theorem 1.57(c)]{JMgt}, the integers $\{b_d\}\cup_{i\in I}\{p_i,v_i\}$ and the subgroup $\bigoplus_{i\in I}\Z/p_i^{\nu_i}\Z$ are the canonically defined and independent of the choice of basis. \begin{defn}\label{defntmb}
	Use $G_d(M,W,\Z)$ to denote the torsion subgroup of $H_d(M,W,\Z)$, i.e., 
	\begin{align*}
		G_d(M,W,\Z)=\bigoplus_{i\in I}\Z/p_i^{\nu_i}\Z.
	\end{align*}Define the $d$-th torsion number $\tau_d$ of $(M,W)$ to be
	\begin{align*}
		\tau_d=\operatorname{lcm}_{i\in I}p_i^{\nu_i},
	\end{align*}where $\operatorname{lcm}$ means the 
	least common multiple. Set $\tau_d=1$ if $H_d(M,W,\Z)$ is torsion-free. 
\end{defn}
Now fix a decomposition  of $H_d(M,W,\Z)$ into subgroups in (\ref{decomphb}), and construct a basis of $H_d(M,W,\Z)$ with respect to this decomposition. 
	\begin{assump}\label{decompbs}
		Let $\{f_1,\cd,f_{b_d},v_1,\cd,v_{|I|}\}$ be a basis of $H_d(M,W,\Z)$ so that 
		\begin{itemize}
			\item 	$\{f_1,\cd,f_{b_d}\}$ is a $\Z$-linear independent basis of the factor $\Z^{b_d}$ in (\ref{decomph}),
			\item for all $j\in I,$ the element $v_j$ generates the $\Z/p_j^{\nu_j}\Z$ factor in (\ref{decomph}).
		\end{itemize}
	\end{assump}We need the following lemma
	\begin{lem}\label{lemnt}
		If $X,Y\in H_d(M,W,\Z)$ are two integral homology classes with the same $\Z/n\Z$-reduction, i.e., $X^{\Z/n\Z}=Y^{\Z/n\Z}$, then
		\begin{align*}
			X-Y\in nH_d(M,W,\Z).		\end{align*} Furthermore, when $\tau_d|n$, then $X-Y$ is $n$ times a non-torsion class.  
	\end{lem}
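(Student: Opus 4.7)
The plan is to translate the hypothesis about $\Z/n\Z$-reductions into an equation in $H_d(M,\Z)$ via the universal coefficient theorem, and then exploit the divisibility $\tau_d \mid n$ to dispose of the torsion summands.

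First, I would unwind the definition. The hypothesis $X^{\Z/n\Z}=Y^{\Z/n\Z}$ means $\iota(X\ts 1) = \iota(Y\ts 1)$ in $H_d(M,\Z/n\Z)$, where $\iota$ is the injection in (\ref{equct}). Injectivity of $\iota$ gives $(X-Y)\ts 1 = 0$ in $H_d(M,\Z)\ts \Z/n\Z$. Using the standard identification $H_d(M,\Z)\ts \Z/n\Z \is H_d(M,\Z)/nH_d(M,\Z)$, this says precisely that $X-Y \in nH_d(M,\Z)$, so I obtain some $Z\in H_d(M,\Z)$ with $X-Y = nZ$.

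Next, I need to upgrade $Z$ to a free class. Using the decomposition (\ref{decomph}) fixed in Assumption \ref{decompbs}, write $Z = Z_f + Z_t$ with $Z_f\in \Z^{b_d}$ and $Z_t \in \bigoplus_{i\in I}\Z/p_i^{\nu_i}\Z$. Because $\tau_d = \prod_{i\in I} p_i^{\nu_i}$ divides $n$, each factor $p_i^{\nu_i}$ divides $n$, so $n$ annihilates every component of $Z_t$; hence $nZ_t = 0$ and $X - Y = nZ = nZ_f$. Since $Z_f$ lies in the torsion-free summand $\Z^{b_d}$, it is a free class (or zero, in which case $X=Y$ and the statement is trivially true), which gives the conclusion.

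The argument is essentially structural, with no real analytic obstacle. The only delicate point is the use of $\tau_d \mid n$: without this divisibility one cannot in general kill the torsion part of $Z$, so the conclusion would only be that $X-Y$ is $n$ times some class, possibly with torsion. I expect this to be the one step worth spelling out carefully, while the rest is a clean application of the universal coefficient theorem and the structure theorem for finitely generated abelian groups.
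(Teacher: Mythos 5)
Your proof is correct and takes essentially the same route as the paper's: both pass through injectivity of $\iota$ in the universal coefficient sequence, then use the structure-theorem decomposition of $H_d(M,\Z)$ and the hypothesis $\tau_d\mid n$ to force the kernel of $\Z/n\Z$-reduction into $n\,\Z^{b_d}$. The only cosmetic difference is that you first identify $H_d(M,\Z)\ts\Z/n\Z$ with $H_d(M,\Z)/nH_d(M,\Z)$ globally and then kill $nZ_t$, whereas the paper computes $\ts\Z/n\Z$ factor by factor via $\Z/m\Z\ts\Z/n\Z\cong\Z/\operatorname{gcd}(m,n)\Z$.
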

	\begin{proof}
		It suffices to determine the kernel of $\Z/n\Z$-reduction. By \cite[Section 3.2 p. 215 property (5)]{AH}, the kernel of $\ts \Z/n\Z$ of an abelian group is $n$ times the group. Thus, if $X,Y$ has the same $\Z/n\Z$-reduction, we have
		\begin{align*}
			X-Y\in nH_d(M,W,\Z).
		\end{align*}
		If $\tau_d|n,$ then $nH_d(M,W,\Z)$ contains no nonzero element of the torsion subgroup $G_d(M,W,\Z).$ We are done.
	\end{proof}
	We also need a classical result by Thom \cite{RT}.
	\begin{lem}\label{lemthom}
		With the same assumptions as in Theorem \ref{thmvn}, there exists an infinite subset $K\s \Z_{\ge 2},$ such that for all $k\in K$, the integral homology class $k[\Si]$ can be represented by a smoothly embedded connected compact submanifold,
	\end{lem}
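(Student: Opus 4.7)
The plan is to reduce to Thom's classical realization theorem and then handle connectedness separately.

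By Thom's theorem \cite{RT}, the subset $G\s H_d(M,\Z)$ consisting of classes representable by a smoothly embedded closed oriented $d$-submanifold of $M$ is a subgroup of finite index. Rationally, the bordism map $\Om_d^{SO}(M)\ts\Q \to H_d(M,\Q)$ is surjective, so its integral image has finite index; and in the oriented setting one converts an immersed bordism representative to an embedded one by resolving self-intersections via standard surgery preserving the homology class. Since $[\Si]$ is a free class and $G$ has finite index in $H_d(M,\Z)$, there is a smallest $k_0 \in \Z_{\ge 1}$ with $k_0[\Si]\in G$, and then $mk_0[\Si]\in G$ for every $m\in\Z_{\ge 1}$. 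For each such $m$, fix an embedded oriented closed representative $\Si^{(m)}\s M$ of $mk_0[\Si]$.

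Next, I would upgrade each $\Si^{(m)}$ to a connected representative by iterated ambient embedded connected sum. If $\Si^{(m)}$ has distinct components $\Si_1,\Si_2,\ldots$, pick any two of them and connect them by a smooth arc $\ga\colon[0,1]\to M$ with $\ga(0)\in\Si_1$ and $\ga(1)\in\Si_2$. Because $c\ge 2$, the expected transverse intersection dimension of the interior of $\ga$ with $\Si^{(m)}$ is $1+d-(d+c)=1-c\le -1$, so a generic choice of $\ga$ meets $\Si^{(m)}$ only at its two endpoints. Inside a tubular neighborhood of $\ga$ in $M$ that is compatible with the tubular neighborhoods of $\Si_1,\Si_2$ at the endpoints, remove a small open $d$-disk from each of $\Si_1$ and $\Si_2$ about the endpoints of $\ga$ and glue in an embedded oriented cylinder $S^{d-1}\times[0,1]$. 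The result is a smooth embedded oriented submanifold in the same homology class $mk_0[\Si]$ with one fewer component. Iterating yields a connected embedded representative $\widetilde{\Si}^{(m)}$.

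Setting $K = \{\, mk_0 : m\in\Z_{\ge 1},\ mk_0\ge 2\,\}$ produces an infinite subset of $\Z_{\ge 2}$ with the desired property. The main subtlety is the invocation of Thom's theorem: the assertion that embedded (not merely immersed) representability forms a finite-index subgroup of $H_d(M,\Z)$ requires some care when $c\le d$, and one must verify that oriented immersed self-intersections can always be removed by surgery without changing the homology class. The subsequent tubing step is a standard ambient connected sum, made transparent by the codimension hypothesis $c\ge 2$ built into Theorem \ref{thmvn}, which is exactly what allows the connecting arc to be chosen disjoint from the interior of $\Si^{(m)}$.
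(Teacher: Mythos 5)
Your overall strategy — invoke Thom's realizability theorem and then make the embedded representative connected by ambient connected sum — is the same as the paper's. The tubing step in your second paragraph is correct: with $c\ge 2$, a generic connecting arc misses $\Si^{(m)}$ away from its endpoints, and the ambient oriented connected sum preserves the homology class. (The paper cites a lemma for $d\ge 2$ and spells out $d=1$ separately because the ``tube'' $S^0\times[0,1]$ needs an explicit orientation-compatible splice there; your uniform description is a minor presentational difference, not a gap.)

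The genuine gap is in the first paragraph, where you assert that the set $G$ of classes representable by smoothly embedded closed oriented $d$-submanifolds is a finite-index \emph{subgroup} of $H_d(M,\Z)$. Thom's cited theorem gives only that for each class $X$ some positive multiple $m_X X$ is embeddably realizable; it does not give that $G$ is closed under addition. Your justification — passing through the rational surjectivity of the bordism map and then ``resolving self-intersections via standard surgery'' — conflates realizability by \emph{maps} (which is the image of a group homomorphism and hence a subgroup) with realizability by \emph{embeddings}: converting a map to an embedding by surgery is not a routine step when $c\le d$ and can fail or change the homology class, as you yourself flag. Fortunately, the subgroup structure of $G$ is not needed. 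Once one embedded representative $N$ of $k_0[\Si]$ exists (which Thom does give), take $m$ disjoint parallel push-offs of $N$ in its normal bundle (rank $c\ge 1$); their union is an embedded, disconnected representative of $mk_0[\Si]$ for every $m\ge 1$, which is exactly the input your tubing step needs. The paper reaches the same conclusion via a short contradiction argument — supposing only finitely many $k$ work and applying Thom once more — which likewise sidesteps any subgroup claim. Replacing your first paragraph with the parallel push-off observation closes the gap.
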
We want to emphasize that here we care about embedded submanifolds representing homology classes, not Steenrod realization by continuous maps.
	\begin{proof}
First, le us prove the lemma without the connectedness assumption. Let us argue by contradiction. Suppose there exists only a finite subset $F\s \Z_{\ge 2}$ so that for $k\in F$, the integral homology class has a smoothly embedded representative. By \cite[Theorem II.29]{RT} some multiple of $\left(\prod_{k\in F}k\right)[\Si]$ admits a smoothly embedded representative, a contradiction. Here we regard $\prod_{k\in F}k$ as $1$ in case $F$ is empty.
				
Next, let us show that when the codimension of $[\Si]$ with respect to the ambient manifold $M$ is at least two, any smoothly embedded representative of $k[\Si]$ with finitely many connected components can be made into a connected submanifold by connected sum of submanifolds. The case of $d\ge 2$ is classical \cite{CL} and a detailed write up can be found in \cite[Lemma 2.8.2]{ZLns}. For $d=1$, we utilize a direct construction. For two disjoint smoothly embedded circles $\ga_1,\ga_3$, we take another curve $\ga_2$ that starts from a point on $\ga_1$ and end at a point on $\ga_3.$ Let $\ga_4$ be $\ga_2$ reversely traversed. Then $\ga_1+\ga_2+\ga_3+\ga_4$ is homologically equivalent to $\ga_1+\ga_3$. As embeddings are generic maps when $d=1<2\le c$ \cite[2.13 Theorem, 2.6 Theorem]{MH}, a generic perturbation of $\ga_1+\ga_2+\ga_3+\ga_4$ gives a closed embedded circle $\ga_1\#\ga_3\in [\ga_1]+[\ga_3]\in H_1(M,\Z).$ 
	\end{proof}
For readers who prefer constructive proof, instead of the arguing by contradiction in the proof of Lemma \ref{lemthom}, when $d<c$, we can use transversality and successive connected sums of one smooth representative to get infinitely many classes with connected embedded representatives. However, the constructive argument won't work when $d>c$ as there is no general way to desingularize transverse intersections. 
\subsection{Balls and spheres}
For a Riemannian manifold $M^{d+c}$, we will reserve the boldface notation $\mathbf{B}^{d+c}_r(p)$ to denote the closed geodesic ball of radius $r$ centered at $p$  and $\mathbf{S}^{d+c}_r(p)$ to denote the closed geodesic sphere of radius $r$ centered at $p$.

When we do not impose a base point, the notations $\mathbf{B}^d_1$ and $\mathbf{S}^{d-1}_1$ means the standard unit ball of $\R^{d+1}$ and the standard unit sphere of $\R^{d}$ regarded as Riemannian manifolds. We need the following classical fact \cite[p.208 Exercise 4]{SSfourier}
\begin{fact}\label{fctvols}
	We have
	\begin{align*}
		\mathcal{H}^{d-1}(\mathbf{S}^{d-1})=&d\frac{\Ga\left(\frac{1}{2}\right)^d}{\Ga\left(\frac{d}{2}+1\right)},\\
		\mathcal{H}^{d}(\mathbf{B}^{d-1})=&\frac{\Ga\left(\frac{1}{2}\right)^d}{\Ga\left(\frac{d}{2}+1\right)}.
	\end{align*}
\end{fact}
	\subsection{Flat Chain and Currents}
	Let us recall the discussion about flat chains before Definition \ref{defnfc}. However, the space of flat chains is too large to have nice compactness properties. To this end, we need a smaller space. By \cite[Sections 4.2.16 and (4.2.16)$^v$ of 4.2.26]{HF}, we have, 
	\begin{fact}\label{fctrc}
		For $R=\Z$ or $\Z/n\Z$ for $n\in\Z_{\ge 2}$, finite mass $R$-flat chains form the space of $R$-rectifiable currents. 
	\end{fact}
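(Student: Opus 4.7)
The statement is the classical rectifiability theorem for flat chains cited to Federer [HF, Sections 4.2.16 and 4.2.26]; the plan is to unpack the two implicit set inclusions and indicate how each is obtained.

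For the easier direction that every $R$-rectifiable current lies in $\mathcal{F}_g(M,R)$ with finite $R$-mass, I would argue directly from the definition: an $R$-rectifiable current is a countable sum of oriented $d$-rectifiable pieces carrying $R$-valued multiplicity with finite total $R$-mass. Applying the deformation theorem piece-by-piece produces a polyhedral approximation in $R$-flat norm with comparable $R$-mass, so such a current belongs to the flat closure $\mathcal{F}_g(M,R)$.

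For the harder converse, fix $T\in\mathcal{F}_g(M,R)$ with $\ms^R_g(T)<\infty$. Since $T$ is an $R$-flat limit of polyhedral $R$-chains $P_k$, I would apply the deformation theorem at a vanishing grid scale to replace the $P_k$ by polyhedral chains whose $R$-masses are uniformly controlled by $\ms^R_g(T)$. For $R=\Z$ this invokes Fleming's compactness theorem together with the Federer-Fleming closure theorem, whose slicing argument extracts an integer-rectifiable limit that is forced to coincide with $T$ by uniqueness of flat limits. For $R=\Z/n\Z$, I would lift each polyhedral $\Z/n\Z$-chain to a polyhedral integral chain with multiplicities selected in $(-n/2,n/2]$ so that the $\Z$-mass of the lift equals the $\Z/n\Z$-mass of the original (cf. Definition \ref{defnnorm}), apply the $\Z$-case to obtain an integer-rectifiable limit, and then push forward under the reduction $\Z\to\Z/n\Z$ to recover $T$ as an $R$-rectifiable current. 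Alternatively, one may invoke White's rectifiability theorem for flat chains with coefficients in a finite normed abelian group [WFfc] to handle $\Z/n\Z$ directly.

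The main obstacle is precisely this converse direction, which is the substance of Federer's closure theorem and must be imported. The torsion case $R=\Z/n\Z$ carries a genuine subtlety: because $\no{\cdot}^{\Z/n\Z}$ is not homogeneous over $\Z$ and the reduction map $\Z\to\Z/n\Z$ is only norm-nonincreasing, one must check that the lift above preserves $R$-mass and that the underlying rectifiable carrier with its $R$-valued multiplicity descends correctly under reduction. Both verifications are carried out in [HF, Section 4.2.26] and in [WFfc], which together supply the required identification.
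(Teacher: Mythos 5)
Your proposal follows the same approach as the paper, which disposes of this Fact purely by citation to Federer (Sections 4.2.16 and $(4.2.16)^{v}$ of 4.2.26); you correctly identify those sections, and also point to White's rectifiability theorem for flat chains over finite normed groups as an alternative route, which is the other standard reference. The paper itself contains no argument for this statement, only the citation.

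One point to flag in your expanded sketch: the lifting argument you propose for $R=\Z/n\Z$ would not go through as stated. Lifting each polyhedral $\Z/n\Z$-chain $P_k$ to a polyhedral $\Z$-chain $\tilde P_k$ with multiplicities in $(-n/2,n/2]$ does preserve mass, but $\Z/n\Z$-flat convergence of the $P_k$ does not imply $\Z$-flat convergence of any particular sequence of lifts. The reduction $\Z\to\Z/n\Z$ is only norm-nonincreasing, so the $\tilde P_k$ can fail to be Cauchy in the $\Z$-flat norm, and one has no boundary-mass bound with which to invoke the Federer-Fleming compactness theorem. Hence the step ``apply the $\Z$-case to obtain an integer-rectifiable limit'' does not apply directly to the lifted sequence. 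In Federer's treatment the mod-$n$ rectifiability result is established differently---mod-$n$ flat chains are handled as equivalence classes of integral flat chains modulo $n$ times integral flat chains, and a $\Z$-rectifiable representative of a finite-mass mod-$n$ chain is produced through that machinery---while White proves rectifiability intrinsically over the coefficient group without any lifting. You do correctly identify this as the main obstacle and defer to the references, so the issue is one of the specific mechanism you sketch rather than of the overall plan.
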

	By \cite[Sections 4.2.17 and (4.2.17)$^v$ of 4.2.26]{HF}, the space of $R$-rectifiable currents with finite boundary mass enjoys compactness and closure properties that allow us to find $R$-area-minimizing representatives. Since we are concerned with area-minimization, it suffices to consider only $R$-rectifiable currents from now on for these two discrete $R.$
	
	An alternative characterization for $\Z$-rectifiable currents \cite[Section 27]{LS} is via the dual space of differential forms. \newcommand{\rct}{\operatorname{Rect}}
	\begin{fact}\label{fctcf}
		Every $d$-dimensional $\Z$-rectifiable current $T$ 	belongs in the dual space of smooth differential forms, with actions on a form $\phi$ defined by
		\begin{align*}
			T(\phi)=\int_{\rct T}\ta_T\phi(\eta)d\mathcal{H}^d,
		\end{align*}where $\rct T$ is the Hausdorff $d$-dimensional (countably) underlying rectifiable set of $T$ defined in \cite[Section 4.1.28(4)]{HF}, $\ta_T$ is the density of the mass measure of $T$ and $\eta$ is the simple unit $d$-vector representing the oriented tangent space to $\rct T$ Hausdorff $d$-dimensional a.e.~
	\end{fact}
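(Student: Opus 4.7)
The plan is to exploit the characterization in Fact \ref{fctrc} that a $\Z$-rectifiable current $T$ is exactly a finite-mass $\Z$-flat chain, hence is approximable in the flat norm by integer-coefficient polyhedral chains with uniformly bounded mass. The strategy is in three stages: first establish the integral representation on polyhedral chains (where it is tautological), then pass to a limit using weak-$\ast$ compactness of Radon measures, and finally invoke a rectifiable structure theorem to identify the limit measure and unit $d$-vector field.

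For stage one, an integer polyhedral chain $P=\sum a_i\cu{\si_i}$ acts on a smooth form $\phi$ by $P(\phi)=\sum a_i\int_{\si_i}\phi$, which immediately matches the stated formula with density $\ta_P=\sum a_i\mathbf{1}_{\si_i}$ and $\eta$ the constant simple unit $d$-vector orienting each simplex. The flat norm controls $|P(\phi)|$ in terms of $\no{\phi}_{C^0}\ms(P)+\no{\phi}_{C^1}\ms(\pd P)$, so the linear functional $\phi\mapsto P(\phi)$ extends continuously from polyhedral chains to all finite-mass flat chains, giving $T$ a well-defined action on smooth forms and showing $T$ lies in the dual of differential forms.

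For stages two and three, pick integer polyhedral $P_k\to T$ flatly with $\sup_k\ms(P_k)\le \ms(T)+1$ (possible via a mollification of the flat-norm approximants). Weak-$\ast$ compactness of Radon measures on $M$ gives a subsequential limit $\mu$ of the mass measures $|P_k|$, and the orienting $d$-vector fields $\eta_{P_k}$ produce a measurable $d$-vector field in the limit. The key analytic input is the rectifiability theorem (Federer--Fleming, as presented in \cite{LS}): the mass measure of a finite-mass $\Z$-flat chain has the form $\ta_T\mathcal{H}^d\res S$ for $S=\supp T$ countably $d$-rectifiable and $\ta_T\in \Z_{\ge 1}$ a.e., with an approximate tangent plane carrying the orientation $\eta$ at $\mathcal{H}^d$-a.e. point. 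Combined with dominated convergence applied to $P_k(\phi)$, this yields the claimed integral representation.

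The main obstacle, by a wide margin, is the rectifiability theorem itself. Establishing that the support of a finite-mass $\Z$-flat chain is $d$-rectifiable with integer density requires the deformation theorem of Federer--Fleming to push the chain onto a polyhedral skeleton of controlled mass, together with an inductive dimension reduction argument to rule out non-rectifiable behavior, plus a separate argument (essentially a slicing and density-ratio estimate) to show the density is almost everywhere a positive integer. Once this structural result is granted, the passage from polyhedral approximations to the integral formula is a routine weak-$\ast$ convergence argument, and the a.e.\ existence of approximate tangent planes on countably $d$-rectifiable sets is standard.
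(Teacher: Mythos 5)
The paper does not prove this statement at all: Fact \ref{fctcf} is presented as a cited result, with the reference \cite[Section 27]{LS}, and the subsequent text simply treats it as known. So there is no ``paper's own proof'' to compare against; you are supplying an argument where the authors rely on Simon's lecture notes.

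Your outline has the right global shape (the hard input is the Federer--Fleming rectifiability theorem for finite-mass integer flat chains, and the rest is soft), but there are gaps in the bridging steps. First, the claim that you can choose integer polyhedral $P_k\to T$ flatly with $\sup_k\ms(P_k)\le\ms(T)+1$ ``via mollification'' is not right: mollification produces smooth currents, not polyhedral chains, and the existence of polyhedral approximants with mass control close to $\ms(T)$ is itself a theorem (Federer's strong polyhedral approximation theorem, a consequence of the deformation theorem), not something one gets for free from the flat-norm closure definition. Second, the weak-$\ast$ argument you sketch for the orientation field is underdeveloped: you need to track the vector-valued measures $\phi\mapsto\int\phi(\eta_{P_k})\,d|P_k|$, not merely the scalar mass measures $|P_k|$, to recover $\eta$ in the limit, and then reconcile the limit with the a.e.\ approximate tangent plane of the rectifiable carrier. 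Third, and more to the point, once you invoke the rectifiability theorem in the strong form you state (mass measure equals $\ta_T\mathcal{H}^d\res S$ with $S$ countably $d$-rectifiable, $\ta_T$ integer-valued, and an a.e.\ orientation $\eta$), the polyhedral approximation scaffolding becomes superfluous: the Riesz representation theorem for currents of finite mass already gives $T(\phi)=\int\phi(\vec{T})\,d\mu_T$ with $\vec{T}$ a $\mu_T$-a.e.\ defined unit $d$-vector field, and substituting the structure of $\mu_T$ yields the formula directly. So your plan is not wrong, but the middle stage does more work than needed while the first stage is stated with an incorrect justification; citing the approximation theorem and Riesz representation explicitly, in place of ``mollification,'' would close the gaps.
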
	Caution is needed for the notion of underlying rectifiable set and the notion of support. The support of an current is the complement of the set where the current vanishes \cite[Section 4.1.1]{HF}, which a priori might be larger than the underlying rectifiable set, e.g., countably many concentric circles with summable radius. However for $R$-area-minimizing flat chains, the two notions are the same by the monotonicity formula \cite[5.4.3 Theorem]{HF}, so we can use them interchangeably when discussing supports of $R$-area-minimizing flat chains.

	Conversely, we can also characterize $\Z$-flat chains using $\Z$ rectifiable currents. By \cite[bottom of p.382 Section 4.1.24]{HF}, we have,
	\begin{fact}\label{fctfcr}
		Every	$d$-dimensional $\Z$-flat chain decomposes as a sum of a $d$-dimensional $\Z$-rectifiable current and the boundary of a $(d+1)$-dimensional $\Z$-rectifiable current and vice versa. 
	\end{fact}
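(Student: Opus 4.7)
The reverse direction is immediate: by Fact \ref{fctrc} every $\Z$-rectifiable current is already a $\Z$-flat chain; boundaries of flat chains are flat by closure of $\mathcal{F}_g(M,\Z)$ under $\partial$; and the space of flat chains is closed under addition. Hence $R+\partial S$ is a $\Z$-flat chain whenever $R$ and $S$ are $\Z$-rectifiable.

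For the forward direction, the plan is a telescoping argument along a flat-Cauchy sequence of polyhedral approximants. Let $T$ be a $d$-dimensional $\Z$-flat chain. By Definition \ref{defnfc} there exist polyhedral chains $P_n$ with $\mathcal{F}^\Z(T-P_n)\to 0$; after passing to a subsequence I may arrange that $\mathcal{F}^\Z(P_{n+1}-P_n)<2^{-n}$. The flat norm on polyhedral chains is the infimum of $\ms(A)+\ms(B)$ over polyhedral decompositions $P_{n+1}-P_n=A+\partial B$, so I may pick polyhedral $A_n,B_n$ satisfying
\begin{align*}
P_{n+1}-P_n = A_n+\partial B_n,\qquad \ms(A_n)+\ms(B_n)<2^{1-n}.
\end{align*}
Setting $R := P_1+\sum_{n=1}^\infty A_n$ and $S := \sum_{n=1}^\infty B_n$, both series converge in mass norm because their term-by-term mass bounds are geometric, and a telescoping computation identifies $T=R+\partial S$ as flat limits.

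The main obstacle is to upgrade ``mass limit of polyhedral chains'' to ``$\Z$-rectifiable current''. Each partial sum defining $R$ or $S$ is a finite sum of polyhedral chains and hence $\Z$-rectifiable by Fact \ref{fctrc}, and the partial sums form a Cauchy sequence in the mass norm with uniformly bounded total mass. I would then invoke the classical closure theorem for $\Z$-rectifiable currents: the $\ms$-limit of a sequence of $\Z$-rectifiable currents with uniformly bounded mass is again $\Z$-rectifiable, since mass convergence preserves the countably rectifiable support and passes the integer density function to the limit (via the Besicovitch--Federer structure theory underpinning Fact \ref{fctcf}). This identifies $R$ and $S$ as $\Z$-rectifiable currents, yielding the desired decomposition and completing the equivalence.
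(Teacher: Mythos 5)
The paper does not prove this fact; it is cited directly to Federer \cite[Section 4.1.24]{HF}, so there is no in-paper argument to compare against. Your proposal reconstructs essentially Federer's own argument: polyhedral approximation, telescoping along a flat-Cauchy subsequence, and mass-controlled decompositions $P_{n+1}-P_n=A_n+\partial B_n$, summing to $R$ and $S$. The telescoping identity, the convergence of the partial sums in mass (hence flat) norm, and the reverse inclusion are all correct.

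The one step worth tightening is the final identification of $R$ and $S$ as $\Z$-rectifiable. You invoke a ``classical closure theorem'' justified by the claim that ``mass convergence preserves the countably rectifiable support and passes the integer density function to the limit,'' citing Besicovitch--Federer structure theory. This is more elaborate (and vaguer) than necessary and somewhat misattributes the mechanism: mass convergence does not by itself preserve rectifiable supports in any direct pointwise sense. The clean route is the one already at your disposal via Fact \ref{fctrc}: the partial sums are flat chains, mass-Cauchy hence flat-Cauchy, so they converge in $\mathcal{F}^\Z$ to flat chains $R$ and $S$; lower semicontinuity of mass under flat convergence gives $\ms(R),\ms(S)<\infty$; and then Fact \ref{fctrc} (finite-mass $\Z$-flat chains are $\Z$-rectifiable currents) immediately yields rectifiability. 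That is where the deep input (the structure theorem) lives, but it is encapsulated in Fact \ref{fctrc}, which you should quote directly rather than re-derive heuristically. With that substitution, the argument is complete and matches the standard proof.
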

	
	The situation with $\R$-flat chains is a bit more complicated. Analogues of Fact \ref{fctrc} and the corresponding compactness and closure theorems for $\R$-rectifiable currents no longer hold. However, by \cite{HFrf}, the problem of finding $\R$-area-minimizing representatives can still be solved in $\R$-flat chains.
	
	An important fact about $\Z/n\Z$ rectifiable current is as follows \cite[p.430 of Section 4.2.26]{HF}. 
	\begin{fact}\label{fctrmn}
		For every $\Z/n\Z$-rectifiable current $T,$ there exists at least one $\Z$-rectifiable current $V$, called Federer's ``representative" modulo $n$ of $T$ such that the $\Z/n\Z$-mass measure of $T$ coincides with the $\Z$-mass measure of $V$ and $T=V^{\Z/n\Z}.$
	\end{fact}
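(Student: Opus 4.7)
The plan is to construct $V$ by pointwise lifting the $\Z/n\Z$-valued multiplicity of $T$ to the unique integer representative in the fundamental domain $(-n/2, n/2]$ built into Definition \ref{defnnorm}.

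First, I would invoke the structure theorem for $\Z/n\Z$-rectifiable currents (analogous to Fact \ref{fctcf} for the integer case) to represent $T$ by a triple $(\Sigma, \xi, \theta)$, where $\Sigma \s M$ is a Hausdorff $d$-rectifiable set, $\xi$ is a Borel-measurable field of simple unit $d$-vectors orienting the approximate tangent planes $\hm^d$-a.e., and $\theta : \Sigma \to \Z/n\Z \setminus \{0\}$ is a Borel-measurable multiplicity. Under this representation the $\Z/n\Z$-mass measure of $T$ is $\|\theta\|^{\Z/n\Z}\, \hm^d \res \Sigma$, and its total mass is $\ms_g^{\Z/n\Z}(T) = \int_\Sigma \no{\theta}^{\Z/n\Z} d\hm^d$. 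Without loss of generality, replace $\Sigma$ by $\Sigma \cap \{\theta \ne 0\}$, which does not alter $T$ or its mass measure.

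Next, I would define the fundamental-domain lift $\ell : \Z/n\Z \to \Z \cap (-n/2, n/2]$ sending each equivalence class to its unique representative in the half-open interval. Since $\Z/n\Z$ is finite and discrete, $\ell$ is trivially Borel-measurable, hence $\theta_V := \ell \circ \theta$ is a Borel-measurable $\Z$-valued multiplicity on $\Sigma$. The $\Z$-rectifiable current $V$ associated with the triple $(\Sigma, \xi, \theta_V)$ via Fact \ref{fctcf} is then well-defined, and its action on a smooth $d$-form $\phi$ is $V(\phi) = \int_\Sigma \theta_V\, \phi(\xi)\, d\hm^d$.

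Finally, I would verify the two required properties. The identity $T = V^{\Z/n\Z}$ is tautological: the chain-level reduction $\iota$ from the sequence \eqref{equct} acts pointwise on multiplicities, and $\iota \circ \theta_V = \iota \circ \ell \circ \theta = \theta$ by construction of $\ell$, so $V^{\Z/n\Z}$ is represented by the same triple $(\Sigma, \xi, \theta)$ as $T$. The mass identity follows from Definition \ref{defnnorm}: by choice of fundamental domain, $|\ell(\alpha)| = \no{\alpha}^{\Z/n\Z}$ for every $\alpha \in \Z/n\Z$, so the integrands defining $\ms_g^\Z(V)$ and $\ms_g^{\Z/n\Z}(T)$ coincide pointwise on $\Sigma$, yielding equality of mass measures (and in particular of total masses).

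The only substantive obstacle is the structure theorem for $\Z/n\Z$-rectifiable currents; once the $(\Sigma, \xi, \theta)$ representation is granted, the rest is a tautology powered by the deliberate choice of the $(-n/2, n/2]$ fundamental domain in Definition \ref{defnnorm}. Any other reversible norm on $\Z/n\Z$ would work equally well provided its unit ball contains a measurable system of representatives; the half-open interval is simply the most convenient one.
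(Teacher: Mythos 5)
The paper does not prove this statement; it is cited directly to Federer \cite[Section 4.2.26]{HF}, so there is no in-paper argument to compare against. Your construction --- lift the $\Z/n\Z$ density pointwise to the fundamental domain $\Z\cap(-n/2,n/2]$ --- is exactly the standard idea and is in the same spirit as Federer's own treatment, so the approach is sound. A few remarks on where the work is actually hiding.

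You are candid that the crux is the structure theorem for $\Z/n\Z$-rectifiable currents, and that is right, but it is worth appreciating how much it is carrying. In the Fleming flat-chain framework (the one in which the paper's $\mathcal F_g(M,\Z/n\Z)$ lives), a $\Z/n\Z$-rectifiable current does \emph{not} come with a canonical $\Z$-rectifiable lift --- producing one is precisely the content of Fact~\ref{fctrmn}. What one does have, from White's rectifiability theorem, is that a finite-mass $\Z/n\Z$-flat chain concentrates on a $d$-rectifiable set with a density, but the density is valued in $(\Z/n\Z\setminus\{0\})/\{\pm1\}$ because there is no intrinsic orientation: the pair $(\xi,\theta)$ is only defined up to the identification $(\xi,\theta)\sim(-\xi,-\theta)$. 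Your proof needs a Borel-measurable selection of orientation on $\Sigma$ to turn this into an honest $\Z/n\Z$-valued $\theta$; this is standard (decompose $\Sigma$ into countably many Lipschitz images of compact subsets of $\R^d$, push forward the orientation, disjointify up to measure zero), but it should be said, because the resulting $V$ genuinely depends on this choice when $n$ is even and $\theta\equiv n/2$ on a set of positive measure --- consistent with the non-uniqueness recorded in Fact~\ref{fctfce}, but not ``tautological.'' Separately, your verification that $T=V^{\Z/n\Z}$ appeals to the reduction map ``acting pointwise on multiplicities''; that map is defined on polyhedral chains and extended by flat continuity, so the claim that it acts on a rectifiable current by reducing the density pointwise is itself a (standard, but unstated) lemma.

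Federer's own argument sidesteps these issues because in his framework a $\Z/n\Z$-current is \emph{by definition} an equivalence class of $\Z$-flat chains modulo $n$, so a $\Z$-rectifiable representative is available from the start and the job is only to minimize mass within the class by reducing the density into the fundamental domain --- essentially the same pointwise surgery you perform. The price Federer pays instead is the (nontrivial, but classical) identification of his quotient construction with the Fleming flat-chain space, which the paper and you both take for granted via Fact~\ref{fctrc}. So: the idea is correct and matches the literature; just be explicit that the measurable orientation selection, the $(\xi,\theta)\sim(-\xi,-\theta)$ quotient, and the pointwise behavior of the reduction map on rectifiable chains are the three places where the ``tautology'' is actually a sequence of standard but real lemmas.
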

	Unfortunately, Federer used the word ``representative" very differently from how we did. In order not to cause reader frustration, we will always use the phrase Federer's ``representative" modulo $n$ together.
	
	We want to emphasize that\begin{fact}\label{fctfce}In general,
		Federer's ``representative" modulo $n$
		\begin{itemize}
			\item  is not unique and many times there are infinitely many,  
			\item of a $\Z/n\Z$-cycle may not even be a $\Z$-cycle at all. 
		\end{itemize}
	\end{fact}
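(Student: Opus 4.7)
The plan is to establish both bullet points by explicit examples, each built from a pointwise structural characterization of Federer representatives. If $V$ is a Federer representative modulo $n$ of the $\Z/n\Z$-rectifiable current $T$, then $V^{\Z/n\Z} = T$ forces $V$ to be carried by $\supp T$ up to $\mathcal{H}^d$-null sets, and $\ms^{\Z}(V) = \ms^{\Z/n\Z}(T)$ combined with the elementary inequality $|k| \ge |k|_{(-n/2,n/2]}$ for $k \in \Z$ forces the pointwise identity $|\theta_V| = |\theta_T|_{(-n/2,n/2]}$ $\mathcal{H}^d$-a.e. Hence, at each point of $\supp T$, $\theta_V$ can only equal the canonical lift $\tilde\theta_T \in (-n/2, n/2]$ of $\theta_T$, or $-\tilde\theta_T$; the latter choice is congruent to $\theta_T$ modulo $n$ only when $n$ is even and $\tilde\theta_T = n/2$.

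For the first bullet I would take $n = 2$ and let $T$ be the unit circle $S^1 \subset \R^2$ with its standard orientation $\eta$ and multiplicity $1 \in \Z/2\Z$; here $\tilde\theta_T = n/2 = 1$ everywhere, so the structural analysis above produces a genuine two-valued choice at every point of $S^1$. Concretely, for every measurable subset $A \subset S^1$, the $\Z$-rectifiable current $V_A$ with multiplicity $+1$ on $A$ and $-1$ on $S^1 \setminus A$, both oriented by $\eta$, satisfies $V_A^{\Z/2\Z} = T$ and $\ms^{\Z}(V_A) = 2\pi = \ms^{\Z/2\Z}(T)$, and hence is a Federer representative of $T$. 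Since distinct $A$ modulo $\mathcal{H}^1$-null sets give distinct multiplicity functions, and therefore distinct $\Z$-rectifiable currents $V_A$, there are uncountably many Federer representatives of $T$.

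For the second bullet I would take $n = 3$ and the theta graph $\Theta \subset \R^2$ formed by three pairwise disjoint smoothly embedded arcs $e_1, e_2, e_3$ joining two distinct points $A$ and $B$, each oriented from $A$ to $B$. Setting $T = (e_1 + e_2 + e_3)^{\Z/3\Z}$, the $\Z$-boundary $\partial(e_1 + e_2 + e_3) = 3(\cu{B} - \cu{A})$ vanishes modulo $3$, so $T$ is a $\Z/3\Z$-cycle. Since $n = 3$ is odd and $\tilde\theta_T = 1 \neq 3/2$, the pointwise analysis above forces $V = e_1 + e_2 + e_3$ as the unique Federer representative of $T$, and $\partial V = 3(\cu{B} - \cu{A}) \ne 0$, so $V$ is not a $\Z$-cycle.

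The main technical step is the pointwise mass identity $|\theta_V| = |\theta_T|_{(-n/2,n/2]}$ a.e., which is the only place where the structure theory of rectifiable currents recorded in Fact \ref{fctrmn} is really used; once it is in hand, both examples are verified by direct inspection and together exhibit the two asserted pathologies.
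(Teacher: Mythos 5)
Your proposal is correct but follows a genuinely different route from the paper. The paper illustrates both bullets with the single two-dimensional example of an embedded $\mathbb{RP}^2$ in $S^4$ with $n=2$: cutting $\mathbb{RP}^2$ along any homotopically nontrivial embedded $C^1$ circle $\gamma$ produces an orientable domain $\Delta$ with $\partial\Delta = 2\gamma$, and each such $\Delta$ is a Federer representative modulo $2$ of $\mathbb{RP}^2$; the freedom in choosing $\gamma$ gives non-uniqueness, and $\partial\Delta\neq 0$ gives non-cyclicity, all in one example that dovetails with the paper's earlier remark that this $\mathbb{RP}^2$ is not a $\Z/2\Z$-reduction of any integral cycle. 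You instead establish a pointwise density dichotomy, $\theta_V \in \{\tilde\theta_T, -\tilde\theta_T\}$ $\mathcal{H}^d$-a.e. with the two-valued branch occurring precisely when $n$ is even and $\tilde\theta_T = n/2$, and then feed it two one-dimensional planar examples: $S^1$ with unit multiplicity modulo $2$, where every measurable sign pattern produces a distinct Federer representative (hence uncountably many), and the theta graph modulo $3$, where the dichotomy collapses to a unique representative $e_1+e_2+e_3$ with boundary $3(\cu{B}-\cu{A})\neq 0$. Your version is more elementary and arguably more informative: it cleanly separates the two pathologies, shows the first bullet is as bad as possible (a continuum of representatives), and shows the second bullet is unavoidable even when the representative is forced to be unique, a point the paper's $n=2$ example cannot make. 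One small slip in exposition: you attribute the fact that $V$ is carried by $\supp T$ to the identity $V^{\Z/n\Z}=T$ alone, but that identity would still permit $V$ to have multiplicity divisible by $n$ off $\supp T$; it is the equality of the mass measure of $V$ with the $\Z/n\Z$-mass measure of $T$ that kills such excess. Since you invoke that mass equality in the very next clause, the argument is sound, but the attribution should be corrected.
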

	For instance, consider an embedded real projective plane $\mathbb{RP}^2$ inside the four-sphere $S^4.$ For each homotopically non-trivial embedded $C^1$ curve $\ga$ on $\mathbb{RP}^2$, cut $\mathbb{RP}^2$ along $\ga$ to obtain an orientable domain $\De.$ Then there exists a unique orientation of $\De$ so that $\pd \De=2\ga$ as $\Z$-rectifiable currents. Any such $\De$ is a Federer's ``representative" modulo $2$ of $\mathbb{RP}^2$, and thus illustrates both bullets of Fact \ref{fctfce}.
	\subsection{Definition of regular and singular sets}
		\begin{defn}\label{defnsm}
	We say an $R$-rectifiable current $T$ is smooth at a point $p$ in the support of $T$ if there exists an open set $U$ containing $p$ on $M$, such that  $T$ restricted to $U$ equals $\ai$ times a smooth submanifold $N$, with $\ai\in R.$ The definitions of regular sets and singular sets of $T$ are as follows:
	\begin{itemize}
		\item The singular set of $T,$ $\sing T,$ is defined as set of points in the support of $T$ where $T$ is not smooth,		
		\item 
		The regular set of $T,$ $\operatorname{Reg}T$, is defined as the set of points in the support of $T$ where $T$ is smooth. 
	\end{itemize}
	\end{defn} 
	Here, support means the underlying set of an integral current defined in \cite[Section 4.1.28(4)]{HF}. From now on, we will also use the symbol $\supp T$ to mean the support of $T.$ 
	For example, the figure $8$ has a singular point at its self-intersection, while figure $0$ counted with multiplicity $-2$ is smooth as $\Z/5\Z$-rectifiable current.
	
	We want to emphasize here that all points $p\in\supp \pd T$ are singular points. In other words, regular sets only contain interior regular points of $T$.
	\subsection{Almgren stratification of singular sets}\label{ssas}
	Recall the definition of $d$-dimensional stationary varifolds in \cite{WA}. Roughly speaking, they can be understood as mass measures of unoriented countably rectifiable sets counted with positive integer multiplicities, and are critical points of $d$-dimensional area. As $\Z$-area-minimizing representatives and $\Z/n\Z$-area-minimizing representatives, with $n\in \Z_{\ge2}$, are minimizers of $d$-dimensional area, their associated mass measures are naturally stationary varifolds.
	
	For a $d$-dimensional stationary varifold $T$, the Almgren stratification of $ T$ (\cite{BWst}) 
is an ascending chain of closed subsets of the support of  $T$
	\begin{align*}
	\mathcal{S}_0T\s\mathcal{{S}}_1T\s\cd\s\mathcal{S}_dT=\supp T,\end{align*}
	such that $\mathcal{S}_j T$ consists of points in $\supp T$ with tangent cones having at most $j$-dimensional translational invariance. The reader can just regard the Almgren stratification as a geometric measure theory analogue of Whitney stratification.

	Define the $j$-th stratum of the Almgren stratification of $T$ as points in $\supp T$ who has a tangent cone with precisely $j$-dimensional translation invariance. In other words,
	\begin{defn}
	We call $\mathcal{S}_jT\setminus\mathcal{S}_{j-1}T$ the $j$-th stratum of the Almgren stratification, with $S_{-1}T=\emptyset.$
	\end{defn}
	For integers $j\le d-1,$ points in the $j$-th stratum of $T$ necessarily lie in the singular set of $T.$ 
	
	The classical regularity result in \cite{BWst} is
	\begin{fact}\label{fctas}
	The $j$-th Almgren stratification of $T$ has Hausdorff dimension at most $j.$
	\end{fact}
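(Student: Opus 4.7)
The plan is to prove the classical Almgren stratification bound via Federer's dimension-reduction argument, in the formulation of White cited as \cite{BWst}. Since $T$ is a stationary $d$-varifold, Allard's monotonicity formula provides monotonicity of the density $\Theta^d(\|T\|,x,r)$ in $r$, and hence the existence at every $p \in \supp T$ of (possibly non-unique) varifold tangent cones, each of which is itself a stationary cone. For a stationary cone $C$ define its spine $\mathrm{spine}(C) := \{y : \tau_y C = C\}$; this is a linear subspace whose dimension equals the translational invariance dimension used to define the strata $\mathcal{S}_j T$. Thus $p \in \mathcal{S}_j T$ iff every tangent cone of $T$ at $p$ has spine of dimension at most $j$.

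First I would set up the contradiction: assume $\hm^\alpha(\mathcal{S}_j T) > 0$ for some $\alpha > j$. By a standard upper-density argument (Marstrand-type), one locates a point $x_0 \in \mathcal{S}_j T$ at which the upper $\alpha$-density of $\mathcal{S}_j T$ is strictly positive. Rescaling by $r_n \downarrow 0$ around $x_0$ and extracting a varifold tangent cone $C$, upper semicontinuity of the stratification assignment under varifold blow-ups transfers the lower Hausdorff measure estimate into $\supp C$, yielding $\hm^\alpha(\mathcal{S}_j C) > 0$. Then the key splitting lemma applies: if $y \in \supp C$ does not lie in $\mathrm{spine}(C)$, then every tangent cone of $C$ at $y$ must contain $\R \cdot y$ as an additional translation direction, hence has spine of strictly larger dimension than $\mathrm{spine}(C)$; consequently $y \notin \mathcal{S}_j C$. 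Therefore $\mathcal{S}_j C \subseteq \mathrm{spine}(C)$, a linear subspace of dimension at most $j$, and this contradicts positivity of $\hm^\alpha$ for $\alpha > j$. The desired bound $\dimh \mathcal{S}_j T \leq j$ follows, whence also $\dimh(\mathcal{S}_j T \setminus \mathcal{S}_{j-1} T) \leq j$.

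The delicate step is the splitting lemma and its interaction with the blow-up semicontinuity, both of which must be verified in the stationary-varifold category where tangent cones need not be unique. The splitting lemma itself rests on the monotonicity identity along rays of a cone, which forces the density of $C$ to be constant along $\R \cdot y$ and hence any tangent cone at $y \neq 0$ to be translation invariant along this extra line. The upper semicontinuity of the stratum dimension under blow-ups is then obtained by combining lower semicontinuity of the density with the standard compactness of stationary varifolds, so that a limit point of $\mathcal{S}_j$ still admits a tangent cone whose spine has dimension at most $j$. Once both ingredients are in place, the contradiction above closes. An alternative route would be to invoke the quantitative stratification machinery of Naber--Valtorta, which upgrades the conclusion to rectifiability of $\mathcal{S}_j T$, but for the Hausdorff dimension bound the Federer--Almgren argument sketched above suffices.
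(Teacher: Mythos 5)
The paper does not prove Fact \ref{fctas}; it simply quotes White's stratification theorem and cites \cite{BWst}, so what you have written is a reconstruction of the argument in the cited reference rather than of anything in the present manuscript. Your outline is indeed the standard Almgren--Federer dimension-reduction scheme and is almost entirely right, but the step asserting $\mathcal{S}_j C \subseteq \mathrm{spine}(C)$ does not follow from the splitting lemma as you have stated it. After the blow-up you only know $\dim\mathrm{spine}(C) \le j$, because $C$ is a tangent cone at a point of $\mathcal{S}_j T$. If $\dim\mathrm{spine}(C) = m < j$, then a point $y \in \supp C \setminus \mathrm{spine}(C)$ has tangent cones whose spine dimension is $\ge m+1$, which may still be $\le j$; such a $y$ can therefore lie in $\mathcal{S}_j C$ and the claimed inclusion fails. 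The missing ingredient is the iteration that is the heart of dimension reduction: when $\dim\mathrm{spine}(C) < j$, one must blow up $C$ again at an appropriate $y \in \mathcal{S}_j C \setminus \mathrm{spine}(C)$ of positive $\hm^\alpha$ upper density, obtaining a new cone whose spine dimension is strictly larger (by the splitting lemma) but still $\le j$ (since iterated tangent cones at points of $\mathcal{S}_j T$ remain tangent cones there), while preserving $\hm^\alpha(\mathcal{S}_j(\cdot)) > 0$. As the spine dimension strictly increases at each step and is bounded above by $j$, after finitely many iterations one reaches a cone with spine dimension exactly $j$, and only then does your inclusion $\mathcal{S}_j C \subseteq \mathrm{spine}(C)$ hold and produce the contradiction against $\alpha > j$. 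With that iteration supplied, your proof coincides with the one in \cite{BWst}.
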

\begin{defn}
		We define the Almgren stratification of any subset $S$ of $\supp T$ by setting 
	\begin{align*}
		\mathcal{S}_jS=\mathcal{S}_jT\cap S.
	\end{align*}
\end{defn}		\subsection{The obstruction to lift $\Z/n\Z$ chains to $\Z$}\label{ssreg}
	In this subsection, we will prove an elementary lemma about the regularity of $\Z/n\Z$ area-minimizing representatives. It isolates the obstruction to lift a $\Z/n\Z$-flat chain cycle to a $\Z$-flat chain cycle. Recall Fact \ref{fctrmn}.
	\begin{lem}\label{lemdv}
		Suppose $T$ is a $d$-dimensional $\Z/n\Z$-flat chain on a not necessarily compact $(d+c)$-dimensional Riemannian manifold $M^{d+c}$. Let $S$ be a rectifiable current that is a Federer's ``representative" modulo $n$ of $T$. 
		Then
		\begin{align*}
			\spt\pd S\s \left(\operatorname{Reg}(T)\cap\left\{p\bigg|\ta_T(p)<\frac{n}{2}\right\}\right)\cp,
		\end{align*}where $\operatorname{Reg}(T)$ denotes the regular part of $T.$\end{lem}
	Here $\ta_T(p)$ is the density of $T$ at $p.$ 
	\begin{proof}
		Note that regular set of $T$ can have density at most $\frac{n}{2}$. We will prove that if $p\in\spt \pd S\cap \textnormal{Reg}(T),$ then $\ta_T(p)= \frac{n}{2}.$ 
		
		Since $p$ is in the regular set of $T,$ there exists a neighborhood $\br(p)$ in which $T$ restricted to $\br(p)$ equals $\ta_T(p) N$ for some smooth submanifold $N$ of $\br(p)$, so that in some coordinate system $(x_1,\cd,x_{d+c})$, $N$ is a smooth ball contained in the $x_1\cd x_d$-plane. By \cite[Corollary 1.4]{RY}, there exists a $\Z$-flat chain $R$ in $\br(p)$ so that
		\begin{align}\label{eqsrn}
			S\res \br(p)=nR+\ta_T(p) N,
		\end{align} as a $\Z$-rectifiable current. Here $S\res \br(p)$ means $S$ restricted to $\br(p).$ Since $S,N$ are both of finite mass, we deduce that $R$ is also of finite mass, thus a $\Z$-rectifiable current by \cite[Section 4.2.16]{HF}.
		
		If $R=0,$ then $\pd S$ restricted to $\br(p)$ is empty, as $S$ coincides with $\ta_T(p)N.$ Thus, we only have to deal with the case of $R\not=0.$
		
		Recall Fact \ref{fctcf}.
		There exist a $d$-dimensional rectifiable set $B$, an $\hm^d$ measurable simple unit $d$-vector field $\eta$ , so that the action of the current $R$ on differential forms is
		\begin{align*}
			R(\phi)=\int_B\ta_R\phi(\eta) d\hm^d(x), 
		\end{align*}
		with $\phi$ any smooth $d$-dimensional form. 
		
		By \cite[page 430]{HF}, the support of $S$ restricted to $\br(p)$ is contained in $N,$ thus by (\ref{eqsrn}) the support of $R$ is also contained in $N,$ i.e., $B\s N.$ This implies that $\eta=\pm \T_x N$ Hausdorff $d$-dimensional a.e.~ Here $\T_x N$ means the unique simple $d$-vector field representing the tangent space at $x$ to $N.$
		
		Thus, we have
		\begin{align*}
			S\res \br(p)(\phi)=\int_{N\setminus B}\ta_T(p) \phi(\T_xN)d\hm^d(x)+\int_{B} (\ta_T(p)\pm n\ta_R(x))\phi(\T_xN)d\hm^d(x).
		\end{align*} 
		By \cite[Section 4.1.28]{HF}, we deduce that
		\begin{align*}
			\ms^{\Z}(S\res \br(p))=\int_{N\setminus B}\ta_T(p)d\hm^d(x)+\int_B |\ta_T(p)\pm n\ta_R(x)|d\hm^d(x).
		\end{align*}
		However, since we have $R\not=0$, i.e., $\ta_R(x)$ being a positive integer on $B$ Hausdorff $d$-dimensional a.e.~, and $\ta_T(p)\le \frac{n}{2}$  by \cite[page 430]{HF}, we have 
		\begin{fact}\label{fctdstin}
			The inequality		\begin{align}\label{eqtatr}
				|\ta_T(p)\pm n\ta_R(x)|\ge \frac{n}{2},
			\end{align}holds
			where equality is achieved if and only if $\ta_T(p)=\frac{n}{2},\ta_R(x)=1.$
		\end{fact}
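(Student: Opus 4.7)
The plan is a short arithmetic argument exploiting integrality of both densities. First I would assemble the two integrality constraints that are already available at this point of the proof of Lemma \ref{lemdv}. By Definition \ref{defnnorm} together with Fact \ref{fctrmn}, the density $\ta_T(p)$ of the $\Z/n\Z$-rectifiable current $T$ at the regular point $p$ is the value of $|\cdot|_{(-n/2,n/2]}$ on the $\Z/n\Z$-multiplicity carried by $T$ there; hence $\ta_T(p)$ is a positive integer satisfying $\ta_T(p)\le n/2$. The second input is that $R$ is a non-zero $\Z$-rectifiable current, so by the very definition of $\Z$-rectifiable currents its density $\ta_R(x)$ is a positive integer, in particular $\ta_R(x)\ge 1$, for $\hm^d$-almost every $x$ in its support $B$.

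With these in hand, I would split into the two sign choices. For the $+$ sign, $\ta_T(p)+n\ta_R(x)\ge 1+n>n/2$ for all $n\ge 2$, so this sign alone already gives strict inequality and cannot contribute to the equality case. For the $-$ sign, the bound $n\ta_R(x)\ge n>n/2\ge \ta_T(p)$ lets me drop the absolute value and estimate
\[
|n\ta_R(x)-\ta_T(p)|\;=\;n\ta_R(x)-\ta_T(p)\;\ge\;n-\tfrac{n}{2}\;=\;\tfrac{n}{2}.
\]
Equality throughout then forces $n\ta_R(x)=n$ and $\ta_T(p)=n/2$ simultaneously, i.e., $\ta_R(x)=1$ and $\ta_T(p)=n/2$, which is exactly the claimed characterization.

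There is no genuine obstacle: the statement is essentially an arithmetic observation once the two integrality facts and the Federer upper bound $\ta_T(p)\le n/2$ are in place. The only point I would be careful about is that the positivity $\ta_R(x)\ge 1$ and the bound $\ta_T(p)\le n/2$ hold only $\hm^d$-almost everywhere, which is harmless because Fact \ref{fctdstin} is used inside a subsequent $\hm^d$-integration over $B$ in the surrounding proof of Lemma \ref{lemdv}.
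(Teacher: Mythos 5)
Your proof is correct and follows essentially the same route as the paper, which simply asserts the fact from the two ingredients $\ta_R(x)\in\Z_{\ge 1}$ (since $R\neq 0$ is $\Z$-rectifiable) and $\ta_T(p)\le n/2$ (Federer, p.~430); you spell out the arithmetic by separating the $+$ and $-$ sign cases, which is exactly what is implicit in the paper's one-line justification.
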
	
		Thus, we have
		\begin{align*}
			\ms^{\Z}(S\res \br(p))\ge \ta_T(p) \ms(N)=\ms^{\Z/n\Z}(T\res \br(p)). 
		\end{align*} By Fact \ref{fctrmn}, we  have\begin{align*}
			\ms^{\Z/n\Z}(T\res \br(p))=	\ms^{\Z}(S\res \br(p)). 
		\end{align*}
		In other words, equality holds in (\ref{eqtatr}) Hausdorff $d$-dimensional a.e.~ on $B.$ By Fact \ref{fctdstin}, we deduce that
		$$\ta_T(p)=\frac{n}{2},\ta_R(x)=1,$$ on $B$ Hausdorff $d$-dimensional a.e.~ Thus $\pd S$ intersect $\reg T$ at $\ta_T=\frac{n}{2}$ and we are done.
	\end{proof} 
	\subsection{The regularity lemma}
	\begin{lem}\label{lemsingt}
	Suppose $T$ is a $d$-dimensional  $\Z/n\Z$-area-minimizing $\Z/n\Z$-	flat chain on a not necessarily compact $(d+c)$-dimensional Riemannian manifold $(M^{d+c},h)$ with possibly non-zero boundary $\pd T$.	Then	$$\sing T\cap (\supp\pd T)\cp\cap \{\ta_T(p,h)<\frac{n}{2}\}$$ has Hausdorff dimension at most $(d-2).$
	\end{lem}
	\begin{proof}
		Recall Section \ref{ssas}. Since $\ta_T(p,h)$ is an upper-semicontinuous function of $p,$ the set $\{\ta_T(p,h)<\frac{n}{2}\}$ is open. By \cite[Theorem 1.7]{DHMS}, the $d$-th stratum of the Almgren stratification of $\sing T\cap (\supp\pd T)\cp\cap \{\ta_T(p,h)<\frac{n}{2}\}$ is of Hausdorff dimension at most $(d-2)$. On the other hand, by \cite[Lemma 8.6]{DHMS}, for any point $p$ in the $(d-1)$-th stratum of the Almgren stratification of $\sing T\cap (\supp\pd T)\cp\cap \{\ta_T(p,h)<\frac{n}{2}\},$ we have $\ta_T(p,h)\ge\frac{n}{2}$, so the $(d-1)$-th stratum of the Almgren stratification of $\sing T\cap W\cap \{\ta_T(p,h)<\frac{n}{2}\}$ is empty. By Fact \ref{fctas}, we are done.
	\end{proof}
	The above regularity lemma will be applied together with the following fact
	\begin{fact}\label{fctbd}
Assume that $S$ is a $d$-dimensional $\Z$-rectifiable current on a not necessarily compact $(d+c)$-dimensional Riemannian manifold $M^{d+c}.$ If $$\supp\pd S\s W\cup O,$$ with $W$ and $W\cup O$ closed, $O$ disjoint from $W$ and $O$ having Hausdorff dimension at most $(d-2)$, then we have $$\supp \pd S\s W.$$ 
	\end{fact}
	\begin{proof}By Fact \ref{fctrmn}, $\pd S$ is a $\Z$-rectifiable current.
 By the definition of support \cite[Section 4.1.1 p.344]{HF}, it suffices to prove that for every point $p\in M\setminus W$, there is a neighborhood $U(p)$ in which $T(\phi)=0$ with $\phi$ any smooth $d$-form vanishing outside of $U(p).$ Recall Fact \ref{fctcf}. We have
\begin{align*}
	\pd T(\phi)=&\int_{\rct \pd T}\theta_{\pd T}\phi(\eta)d\mathcal{H}^{d-1}\\
	=&\int_{\rct \pd T\cap O}\theta_{\pd T}\phi(\eta)d\mathcal{H}^{d-1}+\int_{\rct  {\pd T}\cap W}\theta_T\phi(\eta)d\mathcal{H}^{d-1}\\
	=&0+0,
\end{align*}
 where at the last step we used $O$ having Hausdorff dimension at most $(d-2)$ and $\phi$ vanishing on $W$. We are done.
	\end{proof}
	\subsection{The norms across different metrics}\label{ssnorm}In this subsection, we will collect several facts about the norms $\no{\cdot}^R_g$ across different metrics. First of all, let us enlarge the definition of $\no{\cdot}^R_g.$ 
	\begin{defn}\label{defnrareaall}
	For any $R$-homology class $\omega\in H_d(M,R),$ define	\begin{align}\label{defnrall}
		\no{\om}^R_g=\inf_{\substack{T\in\mathcal{F}_g(M,R)\\ T\in\om}}\ms_g^R(T).
	\end{align}
For any subset $W$ of $H_d(M,R)$, define the set of $R$-area-minimizing representatives $\mn_g^R(W)$ of $W$ to be
	\begin{align*}
		\mn_g^R\left(W\right)=\left\{T\bigg|T\in \mathcal{F}_g(M,R),[T]\in W,\ms^R_g(T)=\inf_{\om\in W}\no{\om}^R_g\right\}.
	\end{align*} 
	\end{defn}
	Definition \ref{defnrarea} and Definition \ref{defnrareaall}  can be reconciled as
	\begin{align*}
	\no{[\Si]}_g^R=&\no{[\Si]^R}^R_g,\\
	\mn_g^R\left([\Si]\right)=&\mn_g^R\left([\Si]^R\right).
	\end{align*}
	Using the argument in \cite[34.5 Theorem]{LS}, we have
	\begin{fact}\label{fctnr}
	$\no{\cdot}^R_g$ depends continuously on $g.$
	\end{fact}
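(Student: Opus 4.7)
The plan is to reduce continuity of $\no{\cdot}^R_g$ in $g$ to the pointwise comparability of the $d$-dimensional area density under small perturbations of the metric, exactly as in \cite[34.5 Theorem]{LS}, and then pass to the infimum in the definition of $\no{\om}^R_g$. The key observation is that $R$-flat chains in the sense of Definition \ref{defnfc} are completions of polyhedral chains, which are purely combinatorial/topological objects that do not depend on $g$; the metric only enters through the flat norm $\mathcal{F}^R$ and mass $\ms^R_g$. For close metrics these norms are uniformly comparable, so the underlying vector spaces $\mathcal{F}_g(M,R)$ and $\mathcal{F}_h(M,R)$ coincide as sets, and membership in a homology class $[\Si]^R \in H_d(M,R)$ is a metric-independent condition.

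First, I would prove the key pointwise estimate: there is a constant $C = C(d)$ such that for any two smooth Riemannian metrics $g,h$ on $M$ with $\sup_{p\in M}\no{g_p - h_p}_{g_p} \le \e$, and for any $R$-rectifiable current $T$,
\begin{align*}
	(1+C\e)^{-1} \ms^R_g(T) \le \ms^R_h(T) \le (1+C\e)\ms^R_g(T).
\end{align*}
This follows from Fact \ref{fctcf}: the $R$-mass measure of $T$ equals $\no{\ta_T}^R\,\mathcal H^d\res \spt T$, and the two Hausdorff measures $\mathcal H^d_g$, $\mathcal H^d_h$ induced on the rectifiable set $\spt T$ are comparable with ratio $(1+C\e)^{\pm 1}$, because at $\mathcal{H}^d$-a.e. point the induced inner products on the $d$-dimensional tangent plane differ by at most $C\e$ as bilinear forms and hence their top exterior powers differ by the same factor up to lower-order terms. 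For $R=\R$ one passes through polyhedral approximations where the same pointwise bound is manifest; for $R=\Z$ and $R=\Z/n\Z$ one uses Fact \ref{fctrc}, and for general $\Z$-flat chains one additionally uses Fact \ref{fctfcr} to reduce to the rectifiable case together with its boundary.

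Second, I would plug this bilateral estimate into the infimum definition (\ref{defnrall}). Let $T_k \in [\Si]^R$ be a minimizing sequence with $\ms^R_g(T_k) \to \no{\om}^R_g$. Each $T_k$ also represents $\om$ in the metric $h$ (since the representation is metric-independent), and by the key estimate $\ms^R_h(T_k) \le (1+C\e)\ms^R_g(T_k)$, yielding $\no{\om}^R_h \le (1+C\e)\no{\om}^R_g$; reversing the roles of $g$ and $h$ gives $\no{\om}^R_g \le (1+C\e)\no{\om}^R_h$. Letting $\e \to 0$ as $h \to g$ in $C^0$ produces $\no{\om}^R_h \to \no{\om}^R_g$, which is the claimed continuity.

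The main technical obstacle is really just the pointwise mass comparison for non-rectifiable $\R$-flat chains, where we cannot directly invoke Fact \ref{fctcf}. This is handled by recalling that the $\R$-mass is lower semicontinuous under the flat topology and is defined as the $\mathcal F^\R$-lower semicontinuous extension of area on polyhedral chains; since the polyhedral comparison is immediate (polyhedra have piecewise linear tangent planes, and the comparison of their $g$- and $h$-volume is elementary linear algebra), the bound $\ms^\R_h(T) \le (1+C\e)\ms^\R_g(T)$ propagates to all $\R$-flat chains by taking flat limits of polyhedral approximations. Everything else is a routine book-keeping of the Simon argument.
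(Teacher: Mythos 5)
Your argument is essentially the standard one that underlies the citation of \cite[34.5 Theorem]{LS} in the paper: compare the $d$-dimensional mass measures for $C^0$-close metrics via a pointwise Jacobian estimate, observe that the flat-chain spaces and the homology classes $[\Si]^R$ are metric-independent, and pass to the infimum in (\ref{defnrall}) on both sides to get bilateral continuity. The paper gives no further detail than the citation, and your fleshed-out version --- including the polyhedral approximation to handle the non-rectifiable $\R$-flat chains where Fact \ref{fctcf} is unavailable --- is a correct expansion of that argument.
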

	For two different metrics $g,h$, by Definition \ref{defnrall}, we have,
	\begin{fact}\label{fctmca}
	If  $\ms_g^R\le A\ms_h^R$ for $A>0,$ then $\no{\cdot}_g^R\le A\no{\cdot}_h^R$. 
	\end{fact}
	Using \cite{HFrf} and \cite[Chapter 5]{HF}, we have
	\begin{fact}\label{factnorm}
	$\no{\cdot}^R_g$ is a norm in the sense of Definition \ref{defnnm}, i.e., satisfying reversibility, triangle inequality and positivity.
	\end{fact}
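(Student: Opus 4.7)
The plan is to verify each of the three axioms in Definition \ref{defnnm} directly from the variational characterization (\ref{defnrall}), deferring to \cite{HFrf} and \cite[Chapter 5]{HF} only for the nontrivial functional-analytic input needed in the positivity step.

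First, for reversibility, I would observe that the assignment $T\mapsto -T$ is a bijection $\mathcal{F}_g(M,R)\to\mathcal{F}_g(M,R)$ which sends chains representing $\om$ to chains representing $-\om$, and that $\ms_g^R(-T)=\ms_g^R(T)$ because the coefficient norm $\no{\cdot}^R$ satisfies reversibility by hypothesis (this passes to the mass norm since mass is built by integrating $\no{\cdot}^R$ against the multiplicity). Taking infima on both sides yields $\no{-\om}_g^R=\no{\om}_g^R$. Second, for the triangle inequality, given $\e>0$ I would pick $T_i\in\om_i$ in $\mathcal{F}_g(M,R)$ with $\ms_g^R(T_i)\le\no{\om_i}_g^R+\e/2$ for $i=1,2$. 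Then $T_1+T_2$ lies in $\mathcal{F}_g(M,R)$ and represents $\om_1+\om_2$, and subadditivity of the mass functional (again a direct consequence of the triangle inequality for $\no{\cdot}^R$ at each point of overlap) gives $\ms_g^R(T_1+T_2)\le\ms_g^R(T_1)+\ms_g^R(T_2)$. Letting $\e\dto 0$ yields the inequality on $\no{\cdot}_g^R$.

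The main obstacle is positivity: one direction, $\om=0\Rightarrow\no{\om}_g^R=0$, is trivial since the zero chain represents the zero class. For the converse, suppose $\no{\om}_g^R=0$ and choose representatives $T_i\in\mathcal{F}_g(M,R)$ of $\om$ with $\ms_g^R(T_i)\to 0$. Since the flat seminorm $\mathcal{F}^R$ is dominated by the mass norm, $\mathcal{F}^R(T_i)\to 0$, i.e., $T_i$ converges to $0$ in the flat topology on $\mathcal{F}_g(M,R)$. The key input from \cite{HFrf} and \cite[Chapter 5]{HF} is that the homology class map $\mathcal{F}_g(M,R)\cap\ker\pd\to H_d(M,R)$ is continuous with respect to the flat topology (equivalently, a flat limit of boundaries of bounded mass is again a boundary, which is exactly the content of the deformation theorem and the closure theorem for $R$-flat chains). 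Applying this continuity along the sequence $T_i$ gives $\om=\lim_i[T_i]=[0]=0$ in $H_d(M,R)$.

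Once these three items are in place, $\no{\cdot}_g^R$ satisfies all three bullets of Definition \ref{defnnm}, establishing Fact \ref{factnorm}. I do not expect to use the homogeneity axiom anywhere, which is consistent with the intentional omission noted after Definition \ref{defnnm} and with the torsion coefficients that motivate this framework.
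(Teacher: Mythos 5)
Your decomposition into the three axioms is the right structure, and the reversibility and triangle-inequality steps are correct and standard: $T\mapsto -T$ preserves mass because of the reversibility of $\no{\cdot}^R$, and subadditivity of $\ms_g^R$ combined with the usual $\e$-optimization gives the triangle inequality on $\no{\cdot}_g^R$. The paper gives no argument of its own here and simply cites \cite{HFrf} and \cite[Chapter 5]{HF}, so your proposal is supplying precisely the content those citations are meant to carry.

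There is, however, a gap in the justification of positivity in the case $R=\R$. You assert that the continuity of the homology class map under flat convergence "is exactly the content of the deformation theorem and the closure theorem for $R$-flat chains." But the paper itself points out, in the discussion after Fact \ref{fctfcr}, that the analogues of the rectifiability and the corresponding compactness and closure theorems \emph{fail} for $\R$-flat chains (bounded-mass $\R$-flat chains need not be $\R$-rectifiable). So for $R=\R$ the closure theorem you invoke is not available. The conclusion you need, namely that the set of boundaries of bounded mass is flat-closed (equivalently, that the flat-chain homology with $\R$-coefficients agrees with singular $\R$-homology), is nevertheless true; but for $R=\R$ it is established in \cite{HFrf} via the duality between $\R$-flat chains and $\R$-flat cochains and the de Rham theorem, not via compactness. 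A cleaner route that works uniformly: for $R=\R$, if $\om\neq 0$ then there is a smooth closed form $\phi$ with $\langle\om,[\phi]\rangle\neq 0$, so every cycle $T\in\om$ satisfies $\ms_g^{\R}(T)\geq |\langle\om,[\phi]\rangle|/\cms_g\phi>0$; for $R=\Z$ or $\Z/n\Z$, your compactness/closure argument goes through as written. Alternatively, one can note that by \cite{HFrf,WFfc,HF} area-minimizers in $\om$ exist for all three coefficient choices, so $\no{\om}_g^R=0$ forces the minimizer to be the zero chain and hence $\om=0$; this packages the same input more economically.
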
 
	A quick corollary of Fact \ref{factnorm} is that
	\begin{fact}
	For any integer $k\in \Z$, $\om\in H_d(M,R),$ we have
	\begin{align}\label{inns}
		\no{k\om}_g^R\le |k|\no{\om}_g^R.
	\end{align}
	\end{fact}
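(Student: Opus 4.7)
The plan is to derive (\ref{inns}) as a direct consequence of Fact \ref{factnorm}, namely that $\no{\cdot}^R_g$ satisfies reversibility, the triangle inequality, and positivity. Since homogeneity is deliberately not assumed in Definition \ref{defnnm}, the content of this statement is precisely that integer scalar multiplication interacts with the norm as one would hope, even though we cannot appeal to the usual $\no{v\om}=|v|\no{\om}$ axiom.

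First I would reduce to the case $k \ge 0$: for negative $k$, the identity $k\om = -(|k|\om)$ together with the reversibility axiom $\no{-\eta}^R_g = \no{\eta}^R_g$ applied to $\eta = |k|\om$ reduces the claim to the nonnegative case. For $k=0$, positivity gives $\no{0}^R_g = 0$, matching the right-hand side.

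For $k \ge 1$, I would induct on $k$. The base case $k=1$ is trivial. For the inductive step, writing $(k+1)\om = k\om + \om$ as an element of $H_d(M,R)$ and applying the triangle inequality from Fact \ref{factnorm} gives
\begin{align*}
\no{(k+1)\om}^R_g \;\le\; \no{k\om}^R_g + \no{\om}^R_g \;\le\; k\no{\om}^R_g + \no{\om}^R_g \;=\; (k+1)\no{\om}^R_g,
\end{align*}
where the second inequality is the inductive hypothesis. This closes the induction.

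There is essentially no obstacle here; the only conceptual point is that because Definition \ref{defnnm} withholds homogeneity (so that $\no{\cdot}^R$ can measure torsion coefficients sensibly), one must verify sub-homogeneity for integer scalars by hand via iterated triangle inequality rather than reading it off a norm axiom. The inequality in (\ref{inns}) is generally strict, for instance in $\Z/n\Z$-coefficients where $\no{n\om}^{\Z/n\Z}_g = 0$ while $|n|\no{\om}^{\Z/n\Z}_g$ need not vanish; this asymmetry is precisely what makes torsion-aware flat chain theory interesting and is consistent with the sub-additive, non-homogeneous nature of the norm.
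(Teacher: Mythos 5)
Your proof is correct and is exactly the straightforward argument the paper intends when it presents (\ref{inns}) as a quick corollary of Fact~\ref{factnorm}: reduce to $k\ge 0$ via reversibility, handle $k=0$ via positivity, and induct using the triangle inequality. The paper gives no further detail, so your write-up fills in precisely the intended steps.
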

	The inequality (\ref{inns}) can be strict in many situations, as we shall see in the proof of Theorem \ref{thmnc}.
	
	Now let us specialize to $R=\R$. Let us start with the basic facts \cite[Section 3.9 and Section 5.8]{HFrf}
	\begin{fact}\label{fctnrt}We have
	\begin{itemize}
		\item $\nog{\cdot}_g$ is turns the the vector space $H_d(M,\R)$ into a finite dimensional Banach space, 
		\item		$\nog{[\Si]}_g\not=0$  if and only if $[\Si]$ is a non-torsion class.
		\item we have
		\begin{align}\label{eqfed}
			\lim_{\substack{k\to\infty\\ k\in\N}}\frac{\no{k[\Si]}_g^{\Z}}{k}=\no{[\Si]}_g^{\R}.
		\end{align}
	\end{itemize}
	\end{fact} 
	A specific corollary of the first bullet in Fact \ref{fctnrt} is that $\no{\cdot}_g^R$ satisfies homogeneity, i.e., equality in (\ref{inns}) always holds for all  $k\in\R.$  
	By the triangle inequality, a quick corollary of the second bullet is that 
	\begin{fact}\label{fctfc}
	For any torsion class $\tau\in H_d(M,\Z)$, we have
	\begin{align*}
		\nog{\tau+[\Si]}_g=\nog{[\Si]}_g.
	\end{align*}
	\end{fact}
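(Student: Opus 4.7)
The plan is a short two-line computation using the triangle inequality together with the second bullet of Fact \ref{fctnrt}. First I would observe that the contrapositive of that bullet is exactly the statement that $\nog{\tau}_g = 0$ for every torsion class $\tau \in H_d(M,\Z)$, since being a torsion class is precisely the negation of being a free class. Conceptually this also reflects the fact that the universal coefficient map $\iota$ in (\ref{equct}) annihilates torsion (if $v\tau = 0$ then $\tau\otimes 1 = \tau\otimes(v\cdot \tfrac{1}{v}) = (v\tau)\otimes \tfrac{1}{v} = 0$ in $H_d(M,\Z)\otimes \R$), so that $(\tau+[\Si])^{\R} = [\Si]^{\R}$ already at the level of real homology classes.

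With $\nog{\tau}_g = 0$ in hand, the triangle inequality of Fact \ref{factnorm} applied to $\tau+[\Si]$ gives
\begin{align*}
    \nog{\tau+[\Si]}_g \le \nog{\tau}_g + \nog{[\Si]}_g = \nog{[\Si]}_g.
\end{align*}
For the reverse inequality I would write $[\Si] = (\tau+[\Si]) + (-\tau)$ and apply the triangle inequality together with the reversibility axiom from Fact \ref{factnorm}:
\begin{align*}
    \nog{[\Si]}_g \le \nog{\tau+[\Si]}_g + \nog{-\tau}_g = \nog{\tau+[\Si]}_g + \nog{\tau}_g = \nog{\tau+[\Si]}_g.
\end{align*}
Combining these two inequalities finishes the proof.

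There is no real obstacle here; the claim is a one-shot corollary of Fact \ref{fctnrt} and Fact \ref{factnorm}, and the only thing one must be careful about is invoking reversibility (rather than homogeneity, which for real coefficients would of course also work) so that the argument is consistent with the general norm framework of Definition \ref{defnnm}.
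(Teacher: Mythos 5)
Your proof is correct and takes essentially the same approach as the paper, which simply remarks that the fact is ``a quick corollary'' of the triangle inequality and the second bullet of Fact~\ref{fctnrt}. You have merely spelled out the two-sided estimate (including the reversibility step for $-\tau$) that the paper leaves implicit.
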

	
	When $R=\Z/n\Z$, we have some partial control on the homogeneity of $\no{\cdot}_{g}^{\Z/n\Z}$.
	\begin{lem}\label{lemhn}
	If an integer $k\in \left(-\frac{n}{2},\frac{n}{2}\right]$ invertible in $\Z/n\Z$ with its unique inverse in 	$\left(-\frac{n}{2},\frac{n}{2}\right]$ denoted by $l,$ then for any $\om\in H_d(M,\Z/n\Z),$ we have
	\begin{align*}
		|l|\m \no{\om}_g^{\Z/n\Z}\le\no{k\om}_g^{\Z/n\Z}\le |k|\no{\om}_g^{\Z/n\Z},
	\end{align*}
	and thus
	\begin{align}\label{inhn}
		\frac{2}{n}\no{\om}_g^{\Z/n\Z}\le\no{k\om}_g^{\Z/n\Z}\le \frac{n}{2}\no{\om}_g^{\Z/n\Z}.
	\end{align}
	\end{lem}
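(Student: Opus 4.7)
The plan is to derive both inequalities by repeatedly applying the already-established inequality (\ref{inns}), which states $\no{k\om}_g^R\le |k|\no{\om}_g^R$ for all integers $k$, combined with the multiplicative invertibility relation $kl\equiv 1\pmod n$.

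\textbf{Upper bound.} For the right-hand inequality $\no{k\om}_g^{\Z/n\Z}\le |k|\no{\om}_g^{\Z/n\Z}$, I would simply invoke (\ref{inns}) directly with $R=\Z/n\Z$, which itself comes from iterating the triangle inequality and reversibility $|k|$ times (writing $k\om=\pm(\om+\cd+\om)$ with $|k|$ summands). No additional argument is needed here.

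\textbf{Lower bound.} This is the only substantive step. Since $k$ is invertible modulo $n$ with inverse $l$, we have $kl\equiv 1\pmod n$, hence as elements of $H_d(M,\Z/n\Z)$,
\begin{align*}
    l(k\om)=(kl)\om=\om.
\end{align*}
Applying the already-proved upper bound to the class $k\om\in H_d(M,\Z/n\Z)$ with multiplier $l$ yields
\begin{align*}
    \no{\om}_g^{\Z/n\Z}=\no{l(k\om)}_g^{\Z/n\Z}\le |l|\no{k\om}_g^{\Z/n\Z},
\end{align*}
which, after dividing by $|l|>0$, gives exactly $|l|\m\no{\om}_g^{\Z/n\Z}\le\no{k\om}_g^{\Z/n\Z}$.

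\textbf{Conversion to (\ref{inhn}).} To pass from the first pair of inequalities to (\ref{inhn}), I would use the basic bound on the range of $k$ and $l$: since both $k$ and $l$ are chosen as the unique representatives in $\left(-\frac{n}{2},\frac{n}{2}\right]$, we have $|k|\le \frac{n}{2}$ and $|l|\le \frac{n}{2}$, so $|l|\m\ge \frac{2}{n}$. Substituting these two bounds into the first pair of inequalities yields (\ref{inhn}) immediately.

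I do not anticipate any real obstacle; the only subtle point is the very reason the lemma is stated only for invertible $k$: without invertibility the identity $l(k\om)=\om$ fails, and indeed $\no{k\om}_g^{\Z/n\Z}$ can drop to $0$ (e.g.\ $k=n/2$ with $\om$ a $2$-torsion class), so no lower bound of the stated form can hold in general.
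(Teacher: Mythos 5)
Your proof is correct and follows essentially the same route as the paper, which simply says ``Apply (\ref{inns}) to $k\om$ and $l(k\om)$'': the upper bound is (\ref{inns}) with multiplier $k$, and the lower bound comes from applying (\ref{inns}) to $k\om$ with multiplier $l$ together with $l(k\om)=\om$. Your expansion of the terse official proof, including the observation that $|k|,|l|\le n/2$ gives (\ref{inhn}), is exactly what is intended.
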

	\begin{proof}
	Apply (\ref{inns}) to $k\om$ and $l(k\om)$.
	\end{proof}
	We need a basic lemma about general norms.
	\begin{lem}\label{lemnorm}
	Fix a basis $\{v_1,\cd,v_b\}$ of $\R^b$. For any continuous Banach norm $\no{\cdot}$ on $\R^n,$ there exists a $C>0$ that depends continuously on $\no{\cdot},$ so that 
	\begin{align*}
		\sum_{j=1}^b|a_j|\no{v_j}\le C \no{\sum_{j=1}^b a_jv_j},
	\end{align*}with each $a_j\in \R.$
	\end{lem}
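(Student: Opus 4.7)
The plan is to invoke the standard equivalence of norms on a finite-dimensional vector space, applied to the coefficient space $\R^b$. First I would introduce two auxiliary functions on $\R^b$: for $a=(a_1,\ldots,a_b)\in\R^b$, set
\begin{align*}
N(a) := \sum_{j=1}^{b}|a_j|\,\no{v_j},\qquad M(a) := \no{\sum_{j=1}^{b} a_j v_j}.
\end{align*}
Both $N$ and $M$ are norms on $\R^b$. For $N$, positivity uses that each $\no{v_j}>0$, which holds because the $v_j$ are nonzero basis vectors and $\no{\cdot}$ is a Banach norm; for $M$, positivity uses that $\{v_1,\ldots,v_b\}$ is a basis, so only the zero coefficient vector can map to zero under $a\mapsto\sum_j a_j v_j$. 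Reversibility and the triangle inequality for both functions are inherited from $|\cdot|$ and $\no{\cdot}$.

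Next, since $N$ and $M$ are norms on the finite-dimensional space $\R^b$, any two norms on $\R^b$ are equivalent, so the unit sphere $S:=\{a\in\R^b:M(a)=1\}$ is compact in the standard topology of $\R^b$. The function $N$ is continuous, hence attains a finite maximum on $S$; setting $C:=\max_{a\in S}N(a)<\infty$ and using positive homogeneity of both $N$ and $M$ gives $N(a)\le C\,M(a)$ for every $a\in\R^b$, which is exactly the desired inequality.

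For the continuous dependence of $C$ on $\no{\cdot}$, fix an auxiliary Euclidean norm $|\cdot|_0$ on $\R^b$. As $\no{\cdot}$ varies continuously (interpreted as pointwise convergence on $\R^b$, equivalently uniform convergence on $|\cdot|_0$-compact sets, since any Banach norm is automatically locally Lipschitz), the scalars $\no{v_j}$ and the function $M$ vary continuously. A standard compactness argument then shows $C=\sup_{M(a)=1}N(a)$ varies continuously: if $\no{\cdot}_k\to\no{\cdot}$ with optimizers $a_k^\star$ satisfying $M_k(a_k^\star)=1$ and $N_k(a_k^\star)=C_k$, then locally uniform equivalence of $M_k$ with $|\cdot|_0$ forces the $a_k^\star$ into a fixed $|\cdot|_0$-compact set, and extracting a subsequential limit $a^\infty$ yields $M(a^\infty)=1$ and $\lim_k C_k=N(a^\infty)\le C$; approximating a maximizer of $N$ on $S$ by a sequence with $M_k(a_k)=1$ produces the reverse inequality, so $C_k\to C$.

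The hard part will be formalizing this continuous dependence: the only subtle point is ensuring the optimizers remain in a uniformly compact set as $\no{\cdot}$ perturbs, which amounts to showing the equivalence constants between $M$ and the reference $|\cdot|_0$ stay locally uniformly bounded as $\no{\cdot}$ varies. The existence of $C$ itself is an immediate consequence of compactness of the unit sphere and continuity of $N$.
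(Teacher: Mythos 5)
Your argument is correct and follows essentially the same route as the paper's own proof: introduce the auxiliary norm $\sum_j|a_j|\no{v_j}$, use compactness of the $\no{\cdot}$-unit sphere together with continuity to bound the ratio, and appeal to homogeneity. You go further than the paper by spelling out the continuity of $C$ in $\no{\cdot}$ (which the paper asserts but leaves implicit), but the core idea is identical.
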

	\begin{proof}
	For $v=\sum_{j=1}^b a_jv_j$ with each $a_j\in\R$ define
	\begin{align*}
		\no{v}^\infty=\sum_{j=1}^b|a_j|\no{v_j}.
	\end{align*}
	It is straightforward to verify that $\no{\cdot}^\infty$ is a continuous function Banach norm on $\R^n$. We need to prove that $\frac{\no{\cdot}^\infty}{\no{\cdot}}$ is bounded. By homogeneity, it suffices to verify this on the norm unit sphere $\{v\in \R^n|\no{v}=1\}.$ This follows from continuity of $\no{\cdot}^\infty,$ and the the compactness of norm unit sphere.
	\end{proof}
	\begin{lem}\label{lemmsbd}
	There exists a continuous function $\upsilon$ mapping the space of Riemannian metrics	to $\R_{\ge 0},$ such that  \begin{align*}
		\sup_{[\Si]\in H_d(M,\Z)\textnormal{ is free}}		\frac{\no{[\Si]}_g^{\Z}}{\no{[\Si]}_g^{\R}}\le \upsilon(g).
	\end{align*}
	\end{lem}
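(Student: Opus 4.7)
The plan is to leverage the basis decomposition of $H_d(M,\Z)$ from Assumption \ref{decompbs}, use it to derive a uniform positive lower bound on $\nog{[\Si]}_g$ over all free classes via Lemma \ref{lemnorm}, and then compare it to a triangle-inequality upper bound on $\no{[\Si]}^{\Z}_g$. Given any free class $[\Si]$, first I would write it in the chosen basis as $[\Si] = \sum_{i=1}^{b_d} a_i f_i + \sum_{j\in I} b_j v_j$ with $a_i\in \Z$ and with each $b_j$ lifted to $\{0,1,\ldots,p_j^{\nu_j}-1\}$; freeness forces at least one $|a_i| \ge 1$. By Fact \ref{fctfc} the torsion summand contributes nothing to the real norm, so $\nog{[\Si]}_g = \nog{\sum_i a_i f_i}_g$. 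Applying Lemma \ref{lemnorm} to the finite-dimensional Banach space $(H_d(M,\R),\nog{\cdot}_g)$ with basis $\{f_1,\ldots,f_{b_d}\}$ produces a constant $C(g)$, continuous in $g$ by Fact \ref{fctnr}, such that $\sum_i |a_i|\,\nog{f_i}_g \le C(g)\,\nog{[\Si]}_g$. Since some $|a_i|\ge 1$, this immediately gives the uniform lower bound
\[
\nog{[\Si]}_g \;\ge\; \frac{1}{C(g)}\min_{1\le i\le b_d} \nog{f_i}_g \;>\; 0,
\]
where positivity uses that every $f_i$ is a free class (Fact \ref{fctnrt}).

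Next, for the numerator, applying the triangle inequality and \eqref{inns} yields
\[
\no{[\Si]}^{\Z}_g \;\le\; \sum_{i=1}^{b_d} |a_i|\,\no{f_i}^{\Z}_g + \sum_{j\in I} \no{b_j v_j}^{\Z}_g.
\]
The torsion contribution is uniformly bounded by $K(g) := \sum_{j\in I} \max_{0\le k < p_j^{\nu_j}} \no{k v_j}^{\Z}_g$, which is finite and continuous in $g$ by Fact \ref{fctnr}. The free contribution I control by multiplying and dividing by $\nog{f_i}_g$ and applying Lemma \ref{lemnorm} once more:
\[
\sum_i |a_i|\,\no{f_i}^{\Z}_g \;\le\; \Big(\max_i \frac{\no{f_i}^{\Z}_g}{\nog{f_i}_g}\Big) \sum_i |a_i|\,\nog{f_i}_g \;\le\; C(g)\,\Big(\max_i \frac{\no{f_i}^{\Z}_g}{\nog{f_i}_g}\Big) \nog{[\Si]}_g.
\]

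Dividing through by $\nog{[\Si]}_g$ and invoking the lower bound produced in the first step, I obtain
\[
\frac{\no{[\Si]}^{\Z}_g}{\nog{[\Si]}_g} \;\le\; C(g)\,\max_i \frac{\no{f_i}^{\Z}_g}{\nog{f_i}_g} + \frac{C(g)\,K(g)}{\min_i \nog{f_i}_g} \;=:\; \upsilon(g),
\]
which is independent of the free class $[\Si]$. Continuity of $\upsilon$ in $g$ follows from Fact \ref{fctnr} applied to every basis element together with the continuous dependence of $C(g)$ on the underlying norm asserted in Lemma \ref{lemnorm}. The main technical delicacy is ensuring this continuous dependence of $C(g)$ on $g$ is actually available, which reduces to combining the compactness-of-unit-sphere argument in the proof of Lemma \ref{lemnorm} with the continuity of $\nog{\cdot}_g$ in $g$.
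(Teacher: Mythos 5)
Your proof is correct and follows essentially the same route as the paper's: decompose $[\Si]$ into its free and torsion parts in the fixed basis, kill the torsion contribution in the denominator via Fact \ref{fctfc}, apply Lemma \ref{lemnorm} to lower-bound $\nog{[\Si]}_g$ by $\sum_j|\ai_j|\nog{f_j}_g/C(g)$, upper-bound $\no{[\Si]}_g^{\Z}$ by triangle inequality, and then compare term by term, invoking Fact \ref{fctnr} for continuity in $g$. The only cosmetic difference is that the paper packages the final comparison as a single application of the mediant inequality, while you split off the free and torsion contributions explicitly and bound the torsion piece by a constant $K(g)$ in place of the paper's $\tau_b\max_{i\in I}\no{v_i}^{\Z}_g$; both yield the same kind of $[\Si]$-independent bound.
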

	\begin{proof}
	Recall Assumption \ref{decompbs}. Every non-torsion class $[\Si]\in H_d(M,\Z)$ can be written as 
	\begin{align*}
		[\Si]=\sum_{j=1}^{b_d}\ai_jf_j+\sum_{i\in I}\be_iv_i,
	\end{align*}
	with each $\ai_j\in \Z,\be_i\in\Z\cap[0,p_j^{\nu_j}],$ and not all $a_i$ are $0.$ By Lemma \ref{lemnorm} there is a continuous function $C$ from the space of Riemannian metrics to $\R_{> 0}$ so that
	\begin{align*}
		\nog{[\Si]}_g=\no{\sum_{j=1}^{b_d}\ai_jf_j}^{\R}_g\ge C\m \sum_{j=1}^{b_d}|\ai_j|\no{f_j}^{\R}_g,
	\end{align*}where at the first equality we have used Fact \ref{fctfc}.
	By the mediant inequality and $\max_j|\ai_j|\ge 1$, we have
	\begin{align*}
		\frac{\no{[\Si]}_g^{\Z}}{\no{[\Si]}_g^{\R}}\le& C\frac{\sum_{j=1}^{b_d} |\ai_j|\no{f_j}_g^{\Z}+\sum_{i\in I}|\be_i|\no{v_i}^{\Z}_g}{\sum_{j=1}^{b_d}|\ai_j|\nog{f_j}_g}\\
		=&C\frac{\sum_{j=1}^{b_d} |\ai_j|\no{f_j}_g^{\Z}}{\sum_{j=1}^{b_d}|\ai_j|\nog{f_j}_g}+C\frac{
			\sum_{i\in I}|\be_i|\no{v_i}^{\Z}_g}{\sum_{j=1}^{b_d}|\ai_j|\nog{f_j}_g}\\
		\le&C\max_{1\le j\le b_d} \frac{\no{f_j}^{\Z}_g}{\no{f_j}_g^{\R}}+C\tau_b\frac{\max_{i\in I}\no{v_i}^{\Z}_g}{\min_{1\le j\le b_d}\no{f_j}_g^{\R}}.
	\end{align*}By Fact \ref{fctnr}, we are done.
	\end{proof}
		\subsection{Calibrations}\label{hscal}
	The standard way to prove $\Z$-area-minimizing and $\R$-area-minimizing is via calibrations. The notion of calibrations started with the classical paper of Harvey-Lawson \cite{HL}.
	
	Recall that the comass of a  $d$-dimensional differential form  $\phi$ is the maximum of $\phi$ evaluated on simple unit $d$-vectors in the tangent space to $M$ among all points \cite[Section 1.8]{HF}. In other words
	\begin{align}\label{defncms}
		\cms_g\phi=\max_{p\in M}\max_{\substack{P\s T_pM\\ \dim P=d}}\phi(P).
	\end{align}
	Here we use the symbol $P$ to denote both a $d$-dimensional plane $P$ and the unit simple $d$-vector representing $P.$
	
	For instance, if $\phi$ is a simple form, then the comass of $\phi$ is equal to the maximum Riemannian length of $\phi$ over all points. 
	
	Now we are ready to give the definition of calibrations.
	\begin{defn}\label{defncal}(Definition of calibrations)
		\begin{itemize}
			\item 	We say a closed $d$-dimensional Hausdorff measurable $d$-form $\phi$ on a (possibly open) ambient manifold is a calibration if its comass is at most $1.$ 
			\item 
			We say a $d$-dimensional integral current $T$ is calibrated by $\phi,$ if $d$-dimensional Hausdorff measure a.e.~where $\phi$ restricted to the tangent space of $T$ equals the volume form of $T.$
		\end{itemize}
	\end{defn}
	In general, if the calibration form $\phi$ has sufficient regularity, then any integral current $T$ calibrated by $\phi$ will be area-minimizing. For smooth forms, Harvey-Lawson \cite[Chapter II, Theorem 4.2]{HL} proved that
	\begin{fact}\label{fcthl}
		If a $d$-dimensional integral current $T$ on a Riemannian manifold is calibrated by a degree $d$ smooth calibration form $\phi$, then $T$ is both $\Z$-area-minimizing and $\R$-area-minimizing, and any integral current homologous to $T$ and having the same area as $T$ must also be calibrated by $\phi.$
	\end{fact}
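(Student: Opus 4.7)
The plan is to run the classical Stokes-plus-comass argument, which bundles all three conclusions into a single chain of (in)equalities. For $\Z$-area-minimization, I take any integral current $T'$ homologous to $T$, so that $T - T' = \pd S$ for some integral $(d+1)$-current $S$. Pairing with $\phi$ and applying Stokes' theorem for integral currents gives
\begin{align*}
T(\phi) - T'(\phi) = (\pd S)(\phi) = S(d\phi) = 0,
\end{align*}
since $d\phi = 0$. Invoking Fact \ref{fctcf}, the calibration hypothesis identifies $\phi$ with the volume form on the oriented tangent planes of $T$ at $\hm^d$-almost every point of $\supp T$, so $T(\phi) = \ms_g(T)$. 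The comass bound $\cms_g \phi \le 1$ yields the pointwise inequality $\phi(\eta) \le 1$ on every simple unit $d$-vector, whence $T'(\phi) \le \ms_g(T')$. Chaining these three relations gives $\ms_g(T) \le \ms_g(T')$.

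For $\R$-area-minimization I would repeat the same argument with $T'$ an arbitrary real flat chain in $[T]^{\R}$: one still has $T - T' = \pd S$ at the level of real flat chains, the pairing of a flat chain with a smooth form is continuous under flat convergence, and the comass bound continues to force $T'(\phi) \le \ms_g^{\R}(T')$. Chaining yields $\ms_g^{\R}(T) \le \ms_g^{\R}(T')$.

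For the equality/uniqueness clause, suppose $T'$ is integral, $[T'] = [T]$, and $\ms_g(T') = \ms_g(T)$. The chain of inequalities above collapses to equality throughout. Writing $T'(\phi) = \int_{\supp T'} \ta_{T'} \phi(\eta) \, d\hm^d$ and $\ms_g(T') = \int_{\supp T'} \ta_{T'} \, d\hm^d$, the identity $T'(\phi) = \ms_g(T')$ combined with the pointwise bound $\phi(\eta) \le 1$ forces $\phi(\eta) = 1$ at $\hm^d$-almost every point of $\supp T'$, which is precisely the statement that $T'$ is calibrated by $\phi$.

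The only delicate point I anticipate is the Stokes step in the real setting: unlike integral currents, a general real flat chain need not be rectifiable, so both $T'(\phi)$ and the identity $(\pd S)(\phi) = S(d\phi)$ must be interpreted through the distributional framework for flat chains (Federer \cite{HF} 4.1.7--4.1.24). Granted this framework, the entire argument is a one-line Stokes' theorem calculation, and its essential content is already contained in \cite{HL}.
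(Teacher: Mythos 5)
Your proof is correct and is precisely the classical Stokes-plus-comass argument of Harvey--Lawson, which the paper does not reproduce but simply cites as \cite[Chapter II, Theorem 4.2]{HL}. The delicate point you flag for the real case is genuine but handled correctly: a smooth form of comass $\le 1$ defines a bounded flat cochain, so $T'(\phi)\le\ms_g^{\R}(T')$ holds for all real flat chains $T'$ by density of polyhedral chains, and Stokes' theorem $(\pd S)(\phi)=S(d\phi)$ is built into the duality pairing in Federer's framework.
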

	The widest possible notion of calibrations is found in $\R$-flat cochains \cite{HFrf}, which are defined as bounded continuous linear functionals on $\R$-flat chains. By \cite[Section 4.1.19]{FF}, $\R$-flat cochains are equivalent to measurable forms with weak exterior derivatives that both have finite mass.
	\begin{defn}\label{defncalr}
		We say an $\R$-flat cochain $\ai,$ is a calibration on $(M,g),$ if 
		\begin{itemize}
			\item $d\ai=0,$
			\item $\ai(T)\le \ms^{\R}_g(T),$ for all $\R$-flat chains $T.$
		\end{itemize}
		We say an $\R$-flat chain $T$ is calibrated by $\ai,$ if
		$$\ai(T)=\ms^{\R}_g(T).$$	
	\end{defn}
	By definition of calibrations, it is clear that any classical calibration form serves as a calibration $\R$-flat cochain via the canonical action of differential forms on $\R$-flat chains. Fact \ref{fcthl} holds when $T$ is an $\R$-flat chain and $\phi$ is a calibration $\R$-flat cochain.
	
	By \cite[Section 4.12]{HFrf}, we have
	\begin{fact}\label{fctcalcc}
		Every element of $\mn_g^{\R}\left([\Si]\right)$ is calibrated by a calibration $\R$-flat cochain.
	\end{fact}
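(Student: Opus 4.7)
The plan is a direct application of the Hahn--Banach separation theorem, mirroring Federer's approach in \cite{HFrf}. Fix $T_0 \in \mn_g^{\R}([\Si])$ and set $m := \ms^{\R}_g(T_0) = \no{[\Si]}^{\R}_g$. Consider the open mass ball $C := \{S : \ms^{\R}_g(S) < m\}$ and the affine subspace $\mathcal{A}$ consisting of finite-mass $\R$-flat chains representing $[\Si]^{\R}$. Both sets are convex; $C$ is open and balanced (since $\ms^{\R}_g$ is $\R$-homogeneous); and the $\R$-area-minimizing property of $T_0$ ensures $\mathcal{A} \cap C = \es$ while $T_0 \in \mathcal{A} \cap \overline{C}$.

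Applying Hahn--Banach separation yields a nonzero continuous linear functional $\ell$ and a constant $c$ with $\ell \le c$ on $C$ and $\ell \ge c$ on $\mathcal{A}$. Since $C = -C$, this strengthens to $|\ell(S)| \le c$ whenever $\ms^{\R}_g(S) < m$, and $\R$-homogeneity upgrades this to the mass bound $|\ell(S)| \le \frac{c}{m}\,\ms^{\R}_g(S)$ for every finite-mass chain $S$. Since $T_0 \in \overline{C} \cap \mathcal{A}$ forces $\ell(T_0) = c$, and since $\ell$ is bounded below on the affine subspace $\mathcal{A}$, it must be constant on $\mathcal{A}$. Equivalently, $\ell$ vanishes on $\pd S$ for every $(d{+}1)$-dimensional $\R$-flat chain $S$, i.e., $d\ell = 0$ in the cochain sense.

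After rescaling so that $\ell(T_0) = m$, the bounds become $|\ell(S)| \le \ms^{\R}_g(S)$ and $\ell(T_0) = \ms^{\R}_g(T_0)$, which is precisely the calibration identity. It remains to check that $\ell$ is an $\R$-flat cochain in the sense of \cite[Section 4.1.19]{FF}, i.e., bounded in the flat norm $\mathcal{F}^{\R}$. This is automatic once closedness and the mass bound are in hand: for any decomposition $T = A + \pd B$,
\begin{align*}
|\ell(T)| = |\ell(A)| \le \ms^{\R}_g(A) \le \ms^{\R}_g(A) + \ms^{\R}_g(B),
\end{align*}
so infimizing gives $|\ell(T)| \le \mathcal{F}^{\R}(T)$, allowing a unique extension of $\ell$ from finite-mass chains to a bounded linear functional on $(\mathcal{F}_g(M,\R), \mathcal{F}^{\R})$.

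The principal technical hurdle is choosing the ambient topological vector space carefully: $\ms^{\R}_g$ is only lower semicontinuous, not a complete norm on all of $\mathcal{F}_g(M,\R)$, so the separation must be performed on the Banach subspace of finite-mass chains equipped with $\ms^{\R}_g$. Only after extracting closedness of $\ell$ can one propagate the functional back to the full flat-norm closure. A secondary subtlety is that the separation theorem yields a continuous \emph{linear} functional a priori, and one must verify (as above, via the closedness) that it lies in the narrower class of $\R$-flat cochains characterized by Whitney--Federer duality, rather than just in the abstract dual of the mass-Banach space.
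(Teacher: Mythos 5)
The paper does not prove this Fact; it simply cites Federer, \cite[Section 4.12]{HFrf}, where the statement is established. Your sketch is a faithful reconstruction of Federer's argument, and it is essentially correct: you identify the right pair of convex sets (the open mass ball of radius $m=\no{[\Si]}^{\R}_g$ and the affine coset of finite-mass chains representing $[\Si]^{\R}$), separate them by Hahn--Banach inside the Banach space of finite-mass $\R$-flat chains with the mass norm, and then exploit the fact that $\mathcal{A}-T_0$ is a genuine linear subspace to conclude that $\ell$ must vanish on it (a linear functional bounded on one side of a subspace is zero there). The symmetry $C=-C$ together with positive $\R$-homogeneity of $\ms^{\R}_g$ correctly upgrades the one-sided separation to the comass bound $|\ell|\le\ms^{\R}_g$, and the passage from the mass-dual to a flat cochain via the decomposition $T=A+\pd B$, $\ell(\pd B)=0$ is the right bridge to Whitney--Federer duality.

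One small point to tighten: when you conclude $\ell(\pd B)=0$ for all $(d{+}1)$-dimensional $B$, observe that in the decomposition $T=A+\pd B$ with $T,A$ of finite mass, the boundary $\pd B=T-A$ automatically has finite mass, so it really does lie in the domain where $\ell$ was constructed; the extension to general flat chains is then by density once the flat-norm bound is in hand. You flag the completeness issue for the mass-norm space of finite-mass chains; this does hold (mass is lower semicontinuous under flat convergence, and a mass-Cauchy sequence is flat-Cauchy), so the Banach space framework is sound. None of this changes the verdict: your proposal is correct and matches the source's argument, which the paper references but does not reproduce.
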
	
	By Fact \ref{fcthl} and \ref{fctcalcc}, we have
	\begin{fact}\label{fctcal}
		A $\Z$-area-minimizing integral current representing $[\Si]$ is calibrated by an $\R$-flat cochain if and only if $\no{[\Si]}_g^{\Z}=\no{[\Si]}_g^{\R}$.
	\end{fact}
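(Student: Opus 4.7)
The plan is to combine Facts \ref{fcthl} and \ref{fctcalcc} with the observation that for any integral current $T$ the mass identity $\ms^{\R}_g(T^{\R})=\ms^{\Z}_g(T)$ holds. This identity is a direct consequence of the norm conventions in Definition \ref{defnnorm}, which make the canonical map $\Z\hookrightarrow\R$ an isometric embedding on coefficients, so that the multiplicity-weighted area of a $\Z$-rectifiable current equals that of its $\R$-reduction. Since Fact \ref{fctncmp} already supplies $\no{[\Si]}^{\R}_g\le \no{[\Si]}^{\Z}_g$, the content of the equivalence is that the existence of an $\R$-flat cochain calibrating a $\Z$-area-minimizer is exactly what upgrades this inequality to an equality.

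For the forward direction, I would assume that a $\Z$-area-minimizing $T\in[\Si]$ is calibrated by some $\R$-flat cochain $\alpha$. The extension of Fact \ref{fcthl} recorded immediately after it (Harvey--Lawson for $\R$-flat cochains) upgrades $T$, viewed as the $\R$-flat chain $T^{\R}$, to being $\R$-area-minimizing in $[\Si]^{\R}$, so $\ms^{\R}_g(T^{\R})=\no{[\Si]}^{\R}_g$. Combined with $\ms^{\Z}_g(T)=\no{[\Si]}^{\Z}_g$ and the mass identity above, the two norms must coincide.

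For the backward direction, starting from $\no{[\Si]}^{\Z}_g=\no{[\Si]}^{\R}_g$ and any $\Z$-area-minimizer $T\in[\Si]$, the mass identity gives
\[
\ms^{\R}_g(T^{\R})=\ms^{\Z}_g(T)=\no{[\Si]}^{\Z}_g=\no{[\Si]}^{\R}_g,
\]
so $T^{\R}\in[\Si]^{\R}$ realizes the $\R$-area. Fact \ref{fctcalcc} then produces an $\R$-flat cochain $\alpha$ with $\alpha(T^{\R})=\ms^{\R}_g(T^{\R})$, and identifying $T$ with $T^{\R}$ under the natural inclusion of integral currents into $\R$-flat chains, this $\alpha$ calibrates $T$.

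There is no serious obstacle here; the argument is essentially bookkeeping once the tools are in place. The only point requiring genuine care is the mass identity $\ms^{\R}_g(T^{\R})=\ms^{\Z}_g(T)$ on integral $T$, which I would justify by unfolding Fact \ref{fctrc} together with the definition of $R$-mass via multiplicities weighted by $\no{\cdot}^R$ and the equality $\no{k}^{\Z}=|k|=\no{k}^{\R}$ for $k\in\Z$ from Definition \ref{defnnorm}.
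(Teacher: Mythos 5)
Your proof is correct and follows essentially the same route the paper implicitly takes (the paper simply states the fact as a consequence of Facts \ref{fcthl} and \ref{fctcalcc} without spelling out the details). The one ingredient you make explicit that the paper leaves silent---the mass identity $\ms^{\R}_g(T^{\R})=\ms^{\Z}_g(T)$ for integral $T$, justified by $\no{k}^{\Z}=\no{k}^{\R}$ for $k\in\Z$---is exactly the bookkeeping needed to turn the two cited facts into the stated equivalence.
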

	\subsection{Subanalytic geometry}In this subsection, we will recall several basic facts about compact $(d-1)$-dimensional subanalytic sets and $\ran$-definable sets in o-minimal geometry. 
	
	For readers not familiar with these concepts, it suffices to treat them as black boxes. Essentially only two properties are needed: existence of finite triangulation \cite[Theorem 4.4]{MComin} and uniform bound on intersections \cite[Exercise 3.13]{MComin}. Great introductions include \cite{MComin,VDDom}, where the following is taken from.
	
	From our perspective, the simplest examples of subanalytic sets to keep in mind are semialgebraic sets, i.e., finite unions and intersections of sets cut out locally by algebraic equations and inequalities. The class of semi-algebraic sets is closed under projections onto Euclidean subspaces, which on the propositional calculus side gives us quantifier eliminations (Tarski's theorem) and yields a very tame and well-behaved class of sets. Heuristically, for a semi-algebraic set, we have stratification into pieces of different dimensions and the largest possible dimension of these pieces is the dimension of the semi-algebraic set. For a $(d-1)$-dimensional semi-algebraic set in $\R^{d+c}$, we have uniform control on the number of intersections it can have with almost every $(c+1)$-dimensional affine planes and this is the most crucial property to us.

However, the sets we want to deal with have to non-algebraic compact analytic sets, e.g., branched minimal surfaces in spheres. Thus, we come to semianalytic sets, i.e., finite unions and intersections of sets locally cut out by analytic equations and inequalities. However, semianalytic sets do not form a very well-behaved class, i.e., no longer closed under projections, so we enlarge the class to subanalytic sets \cite{BMsub} to include projections of semianalytic sets. 

Unfortunately, the class of subanalytic sets are a bit too large, as we might have infinite intersections like zero set of $\sin$, i.e., the intersection of the graph of the function $\sin$ with the real line.
	
	Thus, on our ends, we need another notion that captures simultaneously the finite intersection properties and the analyticity appeared in minimal submanifolds. The right category to consider is $\ran$-definable sets in o-minimal geometry. Roughly speaking, instead of consider any analytic function, we only consider those restricted to a compact subset, e.g., a cube. Then regain uniform control on intersections.
	\begin{fact}\label{fctranl}\cite[Definition on p.506 and discussion right after it]{VDDMom}
		An $\ran$-definable set of $\R$ is a finite union of intervals and points.
	\end{fact}
$\ran$-definable sets are also called globally subanalytic, i.e., subanalytic in the projection completion \cite[2.5 Example (4)]{VDDMom},
	\begin{fact}\label{fctsubran}
		Compact subanalytic sets in Euclidean spaces are $\ran$-definable.
	\end{fact}
Recall Assumption \ref{assumpmb}. We need the following fact.\begin{lem}\label{fctsub}
	If $W$ is a $(d-1)$-dimensional compact subanalytic set of $\R^{d+c}$ and $V$ is a $(c+1)$-dimensional subspace of $\R^{d+c}$, with $c,d\ge 1,$ then for Haar measure almost every $g\in O(d+c)$, $W\cap g.V$ consists of at most $\mu_W$ points. Furthermore, affine translates $W+p$ of $W$ satisfies
	\begin{align*}
		\mu_{W+p}=W.
	\end{align*} 
\end{lem}	
Lemma \ref{fctsub} follows from the following fact
\begin{fact}\label{fctodc}
	The set of points in $g\in O(d+c)$ satisfying $\dim g.V\cap W\ge 1$ is $\ran$-definable and has dimension most $\dim(O(d+c))-1$
\end{fact}
\begin{proof}
The case of $d=1$ is trivial as $\dim W=0$. From now on we assume that $d\ge 2.$	By \cite[Exercise 3.13]{MComin} (which itself is just a successive application of \cite[Theorem 3.12]{MComin}), for every $g\in O(d+c),$ every affine translation $p+W,$ the intersection $g.V\cap (p+W)$ has at most $\mu_W$ connected components, which are $\ran$-definable sets. By Fact \ref{fctodc}, Haar almost every $g\in O(d+c),$ the intersection $g.V\cap W$ consists only of $0$-dimensional $\ran$-definable sets, i.e., discrete points. We are done.
\end{proof}
To prove Fact \ref{fctodc}, first we need an auxiliary lemma about a special $\ran$-definable set of $O(d+c).$
\begin{fact}\label{fctovv}
	Let $V$ be a $(c+1)$-dimensional subspace of $\R^{d+c}$ and $v$ a unit length vector in $V.$ Then the subset $O(V,v)$ of $O(d+c)$ defined by
	\begin{align*}
		O(V,v)=\{g|g\in O(d+c),gv\in V\}
	\end{align*}is a smooth $\ran$-definable set of dimension
	\begin{align*}
		\dim O(V,v)=\dim(O(d+c))-(d-1).
	\end{align*}
\end{fact}
\begin{proof}
	Choose a coordinate $(x_1,\cdots,x_{d+c})$ system on $\R^{d+c}$ so that $V$ is the coordinate plane $x_1\cdots x_{c+1}$ and $v$ is the coordinate vector $\pd_{x_1}$. Identify $\R^{(d+c)^2}$ with the space of matrices via entries.
	
	Consider the real algebraic map
	\begin{align*}
		F:\R^{(d+c)^2}\to \R^{\frac{(d+c)(d+c+1)}{2}}\times\R^{d-1},
	\end{align*}
	defined by
	\begin{align*}
		F(g)=(gg^t,g_{c+2,1},\cdots,g_{c+d,1}),
	\end{align*}	where $\R^{\frac{(d+c)(d+c+1)}{2}}$ is the the linear space of symmetric matrices, $t$ denotes transpose and $g_{i,j}$ means the $ij$-th entry of  $g.$ 
	
	Then $O(V,v)$ is the real algebraic subvariety of $O(d+c)$ defined by
	\begin{align*}
O(V,v)=F\m(I,0).
	\end{align*}
	Let us prove that $dF$ has rank $\frac{(d+c)(d+c+1)}{2}+(d-1)$  when $g\in O(V,v).$ Then by the implicit function theorem \cite[Theorem 7.9]{JL}, we deduce that $O(V,v)$ is a smooth submanifold of dimension
	\begin{align*}
		(d+c)^2-\frac{(d+c)(d+c+1)}{2}-(d-1)=\frac{(d+c)(d+c-1)}{2}-(d-1)=\dim(O(d+c))-(d-1).
	\end{align*}And we are done.

First, we calculate the differential of $F$ when $g$ is invertible.
	\begin{align*}
		dF_g(h)=&(hg^t+gh^t,h_{c+2,1},\cdots,h_{c+d,1})\\=&\left(g(g\m h+h^t(g^t)\m)g^t,h_{c+2,1},\cdots,h_{c+d,1}\right)\\=&\left(g(g\m h+(g\m h)^t)g^t,h_{c+2,1},\cdots,h_{c+d,1}\right).
	\end{align*}
	For any fixed $g$ with $\det g\not=0,$ do a linear change of variables as follows: left multiplication by $g$ on $\R^{(d+c)^2}$ and left multiplication by $g\m$ composed with right multiplication by $(g^t)\m$ on $\R^{\frac{(d+c)(d+c+1)}{2}}$,  we can rewrite
	\begin{align}\label{defndfn}
		dF_g(h)=(h+h^t,g_{c+2}.h^1,\cdots,g_{c+d}.h^1),
	\end{align}where $g_i.h^j$ denotes the inner product of the $i$-th row of $g$ with the $j$-th column of $h.$
	
Now let us specialize to $g\in O(V,v)=F\m(I,0)$.	Then $g_{c+2},\cdots,g_{c+d}$ are orthonormal frames of a vector subspace dimension $(d-1)$ and their first entries are all zero, i.e.,
\begin{align*}
	g_{c+2,1}=\cdots=g_{c+d,1}=0.
\end{align*}
Complete $g_{c+2},\cdots,g_{c+d}$ to an orthnormal frame of $\R^{d+c}:$
\begin{align*}
	w_1=(1,0,\cdots,0),w_2,\cdots,w_{c+1},g_{c+2},\cdots,g_{c+d}.
\end{align*}
Define $\kappa:\R^{d+c}\to\R^{(d+c)^2}$ by setting
\begin{align*}
	\kappa(w)=[w^t,0,\cdots,0]-[w^t,0,\cdots,0]^t.
\end{align*}
In other words, $\kappa(w)$ is the $(d+c)\times (d+c)$ matrix formed by regarding $-w$ as the first row vector and $w$ as the first column vector and add them to the zero matrix. By definition, 
\begin{align*}
	\kappa(w)^t=-\kappa(w).
\end{align*}We claim that 
\begin{fact}
$dF$ in (\ref{defndfn}) sends the span of the set of symmetric matrices and the set $\ka(\R^{d+c})$ onto $\R^{\frac{(d+c)(d+c+1)}{2}}$.
\end{fact}
To prove the above fact, let $e_i^j$ denote the $(d+c)\times(d+c)$ matrix with all entries zero outside of the $ij$-th entry, which is set to $1$. Then direct calculation shows that and we are done:
\begin{align*}
	dF(e_i^j+e_j^i)=&(2e_i^j+2e_j^i,0,\cdots,0)\textnormal{ for }i\ge 2,\\
	dF\left(e_1^j+e_j^1-\sum_{l=c+2}^{c+d}g_{l,j}\ka(g_l)\right)=&(2e_1^j+2e_j^1,0,\cdots,0)\textnormal{ for }1\le j\le c+d,\\
	dF\left(\ka(g_{j,1})\right)=&(0_{d+c})\times (0,\cdots,1,0,\cdots)\textnormal{ the entry }1\textnormal{ is at position }j-(c+1),
\end{align*}
where $0_{d+c}$ denotes the $(d+c)\times(d+c)$ $0$ matrix.
\end{proof}
Now we are ready to prove Fact \ref{fctodc}
\subsubsection{Proof of Fact \ref{fctodc}}We will use several times without mentioning again the citation that for $\ran$-definable sets $A,B$ \cite[Proposition 3.17 (3)]{MComin}, 
\begin{align}\label{eqdimu}
	\dim (A\cup B)\le \max\{\dim A,\dim B\}.
\end{align}
Le us consider the coincidence set. Define
\begin{align*}
	K=\{(g,x)|x\in g.V\cap W\}\s \R^{(d+c)^2}\times \R^{d+c}.
\end{align*}
Here we embed $O(d+c)$ into the set of $(d+c)\times (d+c) $ matrices, identified as Euclidean space $\R^{(d+c)^2}$ via entries. The condition of $x\in g.V\cap W$ is equivalent to $x\in W$ and $g\m x\in V.$ As $O(d+c)$ is a real algebraic variety, this implies that $K$ is the intersection of the $\ran$-definable set $O(d+c)\times W$ and $\{(g,x)|g\m x\in V\}.$ Thus, $K$ is also $\ran$-definable.

Roughly speaking, we will first prove that $K$ has dimension at most $\dim(O(d+c))$ and then use \cite[Theorem 3.18]{MComin} to deduce that the set of points $g\in O(d+c)$ satisfying $\dim g.V\cap W=l$ is $\ran$-definable and has dimension most $\dim(O(d+c))-l$.

Let us partition $K$ into a disjoint union,
\begin{align*}
	K=K_0\cup K_{\not0},
\end{align*}
by setting
\begin{align*}
	K_0=&K\cap\left( O(d+c)\times\{0\}\right),\\ K_{\not0}=&K\cap \left(O(d+c)\times W\setminus\{0\}\right).
\end{align*}
Both $K_0$ and $K_{\not0}$ are $\ran$-definable sets by construction.

If $0\in W$, then $K_0= O(d+c)\times\{0\}$. If $0\not\in W$, then $K_0=\es.$ Anyway, we always have
\begin{align*}
	\dim K_0\le\dim(O(d+c)).
\end{align*}
On the other hand, consider the projection $\pi_{\R^{d+c}}$ of $K_{\not 0}$ to the $\R^{d+c}$ factor. For any $x\in \pi_{\R^{d+c}}(K_{\not=0}),$ we have
\begin{align*}
\pi_{\R^{d+c}}\m (x)\cap K_{\not=0}=\{g|g\in O(d+c),x\in g.V\}.
\end{align*}
Let us determine the dimension of $\pi_{\R^{d+c}}\m (x).$ Since $x\in \pi_{\R^{d+c}}(K_{\not=0}),$ there exists a $g_x$ so that $$v_x=g_x.x\in V.$$ Thus, 
\begin{align*}
\pi_{\R^{d+c}}\m (x)\cap K_{\not=0}=\{g|g\in O(d+c),g.\frac{v_x}{|v_x|}\in V\}.g_x\m.	
\end{align*}
Apply Fact \ref{fctovv} to $\{g|g\in O(d+c),g.\frac{v_x}{|v_x|}\in V\}$, we deduce that
\begin{align*}
	\dim \pi_{\R^{d+c}}\m (x)\cap K_{\not=0}=\dim(O(d+c))-(d-1).
\end{align*}
In other words, all slices of $K_{\not0}$ by $\pi_{\R^{d+c}}$ has exactly the same dimension $O(d+c)-(d-1)$. Thus, by \cite[Theorem 3.18, $X_d=\pi_{\R^{d+c}}(K_{\not 0}),d=\dim(O(d+c))-(d-1),m=d+c,n=(d+c)^2$]{MComin}, we have
\begin{align*}&\dim(K_{\not0})\\=&
\dim\left(K_{\not0}\cap \left(\R^{(d+c)^2}\times\pi_{\R^{d+c}}(K_{\not 0})\right)\right)\\=&\dim(O(d+c))-(d-1)+\dim \pi_{\R^{d+c}}(K_{\not 0})\\\le& \dim(O(d+c))-(d-1)+\dim W\\
=&\dim(O(d+c)).
\end{align*}
Thus, we have
\begin{align*}
	\dim K\le \max\{\dim K_0,\dim{K_{\not0}}\}\le \dim(O(d+c))
\end{align*}
For $0\le l\le \min\{\dim P,\dim W\},$ define\begin{align*}
	S_{\dim g.V\cap W= l}=&\{g|g\in O(d+c),\dim g.V\cap W=l\}=\{g|g\in O(d+c),\dim\pi_{\R^{d+c}}(K)=l\},\\
	S_{\dim g.V\cap W\ge 1}=&\cup_l S_{\dim g.V\cap W= l}.
\end{align*}
In other words, $S_{\dim g.V\cap W= l}$ is the set of elements in $O(d+c)$, where $g.V\cap W$ is of dimension $l,$ or equivalently, $\pi_{\R^{d+c}}(K)$ has dimension $l.$ By \cite[Theorem 3.18]{MComin}, $S_{\dim g.V\cap W= l}$ is $\ran$-definable for all $l.$

By \cite[Theorem 3.18, $A=S_{\dim g.V\cap W= l},d=l,m=(d+c)^2,n=d+c$]{MComin}, for $0\le l\le \min\{\dim P,\dim W\}$, we have
\begin{align*}
	\dim S_{\dim g.V\cap W= l}+l=\dim(K\cap (S_{\dim g.V\cap W= l}\times\R^{d+c}))\le\dim K\le\dim(O(d+c)).
\end{align*}
In other words,
\begin{align*}
	\dim S_{\dim g.V\cap W= l}\le \dim(O(d+c))-1.
\end{align*}
This implies that
\begin{align*}
	\dim S_{\dim g.V\cap W\ge 1}\le O(d+c)-1.
\end{align*}
We are done.
			\subsection{Constancy theorem and Homology of $R$-flat chains in $W$}
	We will use the following constancy theorem several times
	\begin{fact}\label{fctct}
		Let $N$ be a $d$-dimensional open submanifold in a complete $(d+c)$-dimensional manifold $M$. Then an $d$-dimensional $R$-flat chain $T$ such that $\supp \pd T\cap N=\es$ restricted to $N$ equals $\ai N$ for some $\ai\in R.$
	\end{fact}
	\begin{proof}
		Apply \cite[6.1 Theorem]{MSfc}
	\end{proof}
	Recall Assumption \ref{assumpmb}. Using the above Fact \ref{fctct}, let us first prove that the $d$-th homology group of $R$-flat chains supported in $W$ is isomorphic the $d$-dimensional simplicial homology of $W$.
	\begin{lem}\label{fcthomr}
		The $d$-dimensional $R$-homology of $W$, $H_d(W,R)$, satisfies the following properties
		\begin{enumerate}
			\item $H_{d-1}(W,R)$ is finitely generated abelian group and is free $R=\Z$ or $\R$,
			\item every element of $H_{d-1}(W,R)$ can be represented by a unique $(d-1)$-dimensional simplicial $R$-cycle and vice versa,
			\item every $(d-1)$-dimensional $R$-flat chain cycle supported in $W$ can be represented by a simplicial cycle, 
			\item the $(d-1)$-dimensional homology of $R$-flat chains supported in $W$ is isomorphic to $H_{d-1}(W,R).$ 
			\item every element of $H_{d-1}(W,R)$ can be represented by a unique $(d-1)$-dimensional $R$-flat chain cycle and vice versa.
		\end{enumerate}
		\end{lem}
		\begin{proof}
			First, $W$ admits a finite triangulation (\cite[7.4 Theorem]{BPhs} for subanalytic $W$ and \cite{SCtr} for smooth $W$). Next, there is no $d$-dimensional simplices, so there is non-nontrivial exact $(d-1)$-dimensional chains. Thus, any homology class of $H_{d-1}(W,R)$ has a unique representative by a simplicial cycle. This means that the group $H_{d-1}(W,R)$ is a subgroup of the finitely generated abelian group of $R$-simplicial $d$-dimensional chains in $W$. By \cite[Proposition 4.52]{AKba} $H_{d-1}(W,R)$ is finitely generated. When $R=\Z$ or $\R$, the group of $R$-simplicial chains is free, so its subgroup $H_{d-1}(W,R)$ is also free \cite[Theorem 4.55]{AKba}. We are done with the first bullet and the second bullet.
			
			For the third bullet, as $\pd T=0$, by Fact \ref{fctct}, every $R$-flat cycle is represented by an simplicial $R$-chain and then apply the second bullet. 
			
			The fourth bullet and the last bullet follows from the third bullet and the second bullet.	
		\end{proof}
	\subsection{Plateau problem across different norms}
	The discussion in Section \ref{ssnorm} applies to Plateau problems in Euclidean space as well. Recall Definition \ref{defnpf}. The results Section \ref{ssnorm} holds over verbatim with homology replaced by relative homology. Let us reiterate the details.
	
Again, let us first enlarge the definition of $\no{\cdot}^R.$ 
	\begin{defn}\label{defnrareaallb}
Use $\mathcal{C}(\R^n,R)$ to denote the subspace of $\R$-flat chains on $\R^n$ that are cycles, i.e., \begin{align*}
	\mathcal{C}(\R^n,R)=\{T|T\in\mathcal{F}(\R^n,R),\pd T=0\}.
\end{align*}		For any $R$-flat chain $\beta\in \mathcal{C}(\R^n,R)$ define	\begin{align}\label{defnrallb}
			\no{\be}^R_\de=\inf_{\substack{T\in\mathcal{C}(\R^n,R)\\ \pd T=\be}}\ms^R_\de(T).
		\end{align}
For any subset $S$ of $\mathcal{C}_d(\R^{d+c},R)$, define the set of $R$-area-minimizing representatives $\mn^R_\de(S)$ of $S$ to be
\begin{align*}
	\mn^R_\de\left(S\right)=\left\{T\bigg|T\in\mathcal{F}(\R^n,R),\be\in S,\ms^R_\de(T)=\inf_{\be\in S}\no{\be}^R_\de\right\}.
	\end{align*} 	\end{defn}	
	Definition \ref{defnpf} and Definition \ref{defnrareaallb}  can be reconciled as
	\begin{align*}
		\no{[\Si]}^R_\de=&\no{\pd[\Si]^R}^R_\de.
	\end{align*}
	Using \cite{HFrf} and \cite[Chapter 5]{HF}, we have
	\begin{fact}\label{factnormb}
		$\no{\cdot}^R_\de$ is a norm in the sense of Definition \ref{defnnm} on $\mathcal{C}(\R^n,R)$ i.e., satisfying reversibility, triangle inequality and positivity.
	\end{fact} 	
		A quick corollary of Fact \ref{factnormb} is that
	\begin{fact}
		For any integer $k\in \Z$, $\be\in\mathcal{C}(\R^n,R),$ we have
		\begin{align}\label{innsb}
			\no{k\be}^R_\de\le |k|\no{\be}^R_\de.
		\end{align}
	\end{fact}
	The inequality (\ref{innsb}) can be strict in many situations, as proven by Morgan \cite{FMmc} and White \cite{BWmc}.
	
	Now let us specialize to $R=\R$ and start with the basic facts \cite[Section 3.9 and Section 5.8]{HFrf}.	\begin{fact}\label{fctnrtb}We have
		\begin{itemize}
					\item $\nog{\cdot}_\de$ turns the vector space $H_d(\R^{d+c},W,\R)$ into a finite dimensional Banach space, 
			\item 	$\no{k\be}^\R_\de= |k|\no{\be}^\R_\de$ for any $k\in\R$,
			\item and
			\begin{align}\label{eqfedb}
				\lim_{\substack{k\to\infty\\ k\in\N}}\frac{\no{k\be}^{\Z}_\de}{k}=\no{\be}^{\R}_\de.
			\end{align}
		\end{itemize}
	\end{fact} 
Now let us state the analogue of Lemma \ref{lemmsbd}. 
	\begin{lem}\label{lemmsbdb}
We have  \begin{align*}
		\sup_{[\Si]\in H_d(\R^{d+c},W,\Z)}		\frac{\no{[\Si]}_\de^{\Z}}{\no{[\Si]}_\de^{\R}}\le \upsilon(\de).
	\end{align*}
\end{lem}	\begin{proof}
By the first bullet of Lemma \ref{fcthomr}, $H_d(\R^{d+c},W,\Z)$ is free. The same proof of Lemma \ref{lemmsbd} works by replace homology of $M$ with relative homology of $(\R^{d+c},W)$. 	\end{proof}
	\subsection{Zhang's constructions}We need a way to construct metrics so that we can make a smoothly embedded representative of an integral homology class area-minimizing.
	\begin{lem}\label{lemzhang}
	Assume $N$ is a $d$-dimensional connected embedded compact closed submanifold of $M$. For any real number $\lam>0,$  there exists a smooth tubular neighborhood $U(N)$ of $N,$ a smooth Riemannian metric $h$ on $M$, and a smooth deformation retract $\pi_N$ of $U(N)$ onto $N$, such that
	\begin{itemize}
		\item any stationary varifold on $(M,h)$ whose support is not contained in $U(N)$ has area larger than $\lam \ms_h^{\Z}(N),$
		\item $\pi_N\du \dvol_{N}$ is a calibration form on $(U(N),h),$
		\item $N$ is the only integral current calibrated by $\pi_N\du \dvol_{N}$ in $(U(N),h)$,
		\item if $N$ represents a non-zero $\R$-homology class, then $\pi_N\du \dvol_{N}$ can be extended to a smooth calibration form on $(M,h),$
		\item $N$ is $\Z/n\Z$-area-minimizing in $U(N)$ for $n\in \Z_{\ge2}$.
	\end{itemize}
	\end{lem}
	\begin{proof}
	The lemma combines several results of Yongsheng Zhang's thesis \cite{YZt} and published in \cite{YZj,YZa}. Take a tubular neighborhood $U(N)$ of $N$ inside $M.$
The first bullet is \cite[Lemma 3.1]{YZj} applied to $U(N)$ and $M$. The second to the fourth bullet come from \cite[Sections 3.3, 3.4]{YZa}. The last bullet follows from \cite[Lemma 2.12]{ZLa}. For an alternative proof the last bullet, by \cite[Fact 7.0.6]{ZLns} $\pi\du_N\dvol_N$ is a calibration form on $U(N)$ implies that $\pi$ is a $d$-dimensional area-non-increasing map from $U(N)$ to $N$, i.e., \begin{align}\label{eqani}
		\ms^R(\pi\pf T)\le \ms^R(T),
	\end{align} for $R=\Z,$ or $\Z/n\Z$ with $n\in\Z_{\ge2 }$ and $T$ any $R$-flat chains supported in $U(N).$ For any $R$-flat chain $N'$ homologous to $N$ in $U(N)$, we necessarily have $\pi\pf N'=N$ since $\pi$ is a deformation retract. By (\ref{eqani}), we are done.
	\end{proof}

	\section{A priori density upper bounds under Assumption \ref{assumpm}}\label{secmon}
	In this section and the next section, we will carry out the first step in Section \ref{secsof}. bounds. Let us first prove a monotonicity formula, from which the desired density bounds follow directly.\subsection{Monotonicity formula across metrics in the compact setting}\label{ssmon}
In this subsection, we prove a monotonicity formula for stationary varifolds that holds across different metrics. Recall Assumption \ref{assumpm}. 
\begin{lem}\label{lemmon}
	There exist an open set $\Om_g$ containing $g$ in the space of smooth Riemannian metrics, positive real numbers $C_g,c_g,r_g>0$ depending on $g$,  such that, for any metric $h\in \Om_g$, any point $p\in M$ and  all radii $r\in(0,r_g],$ we have	\begin{itemize}
		\item for any $d$-dimensional integral stationary varifold $V$ on $M$ in the metric $h$, the function,
		\begin{align*}
			\exp(c_gr)V(\br(p,h))r^{-d},
		\end{align*} is monotonically increasing in $r$, where $V(\cdot)$ denotes the mass measure of $V$, and $\br(p,h)$ is the radius $r$ geodesic ball in metric $h$  centered at $p,$
		\item and the density $\ta_V({p,h})$ of $V$ at $p$ in $h$ is well-defined and satisfies
		\begin{align*}
			\ta_V({p,h})\le C_g V(M).
		\end{align*}
	\end{itemize}
\end{lem}
\begin{proof}
	The argument is the Riemannian adaptation of the first two sections of \cite{Dar}. First by \cite[Section 8 Theorem]{PEcir}, the injectivity radius $\inj_g(M)$ of $M$ in $g$ depends continuously on the metric $g$. Thus, there exists an open neighborhood $\Om_g$ containing $g$ in the space of Riemannian metrics so that 
	\begin{align*}
		r_g=\frac{1}{2}\inf_{h\in \Om_g}\inj_{h}(M)>0.
	\end{align*}	
	For any metric $h\in\Om_g,$ and any point $p\in M$, adopt a normal coordinate $(x_1,\cd,x_{d+c})$ in $\mathbf{B}_{r_g}(p,h)$. Let $\mathbf{r}_{p,h}$ denote the distance function to $p$ on $(M,h)$. By \cite[Chapter 2 ]{AG} we have 
	\begin{align}\label{rreq}
		\mathbf{r}_{p,h}\na \mathbf{r}_{p,h}=\sum_j x_j\pd_j.
	\end{align}
	A straightforward calculation using Taylor expansion shows that for any unit length tangent vector $u$ in $\mathbf{B}_{r_g}(p,h)$, we have
	\begin{align}\label{ineqr}
		|\ri{\na_u(\mathbf{r}_{p,h}\na \mathbf{r}_{p,h}),u}-1|\le c_{p,h},
	\end{align}for some $c_{p,h}>0$ which depends only on the $C^3$ norm of the ambient metric $h$, i.e.,
	\begin{align}\label{cnorm}
		c_{p,h}=O(\no{h}_{C^3}).
	\end{align}
	By compactness of $M$ and shrinking $\Om_g$ if necessary, we can set
	\begin{align*}
		c_g=\sup_{p\in M,h\in\Om_g}c_{p,h}<\infty.
	\end{align*}	
	Arguing as in the first two sections of \cite{Dar}, we deduce from (\ref{ineqr}) that $\exp(c_gr)V(\br(p,h))r^{-d}$ is monotonically increasing in $r$.
	
	The second bullet follows from the first bullet and \begin{align}
		\ta_V({p,h})=\lim_{r\to 0}\frac{V(\br(p,h))}{r^d}\le \exp(c_gr_g)V(\mathbf{B}_{r_g}(p,h))r_g^{-d}\le C_gV(M).
	\end{align}   
\end{proof}
\subsection{Density upper bound}
As a direct corollary, we deduce that
\begin{lem}\label{lemdst}
	Using the same notation as Lemma \ref{lemmon}, for $R=\Z$ or $R=\Z/n\Z$ with $n\ge 2$ and all $p\in M,h\in\Om_g$ and any $T\in\mn_h^{R}\left([\Si]\right),$ we have
	\begin{align*}
		\ta_T(p,h)\le C_{[\Si]},
	\end{align*}with $C_{[\Si]}>0$ depending only on $[\Si]$ and $\Om_g,$ not on $R.$
\end{lem}
\begin{proof}
	Use Lemma \ref{lemmon} and Fact \ref{fctncmp}.
\end{proof}
\section{A priori density bounds under Assumption \ref{assumpmb}}\label{secmonb}
In this section, we will continue carrying out the first step in Section \ref{secsof}. This this section will be devoted to the proof of the following lemma under Assumption \ref{assumpmb}.\begin{lem}\label{lemmondstb}
Assume Assumption \ref{assumpmb}.	Let $T\in\mn^R_\de([\Si])$ for $R=\Z$ or $R=\Z/n\Z$ with $n\in\Z_{\ge 2}.$ There exists $C_{[\Si]}>0$ depending only on $[\Si],$ not on $R,$ so that for any point $p\in\R^{d+c}$, then $\theta_T(p),$ the density of the $R$-mass measure of $T$ at $p,$ is at most $\theta(B),$ i.e.,
	\begin{align}\label{equfdst}
		\theta_T(p)\le C_{[\Si]}.
	\end{align} 
\end{lem}
Recall Lemma \ref{fcthomr}. It is convenient to fix a notation on the unique representative of $\pd[\Si]\in H_{d-1}(W,\Z).$
\begin{defn}
	Use $\pd\Si$ to denote the unique $\Z$-flat cycle representing the class  $\pd[\Si]\in H_{d-1}(W,\Z).$
\end{defn}
The case of smooth $W$ and subanalytic $W$ will be dealt with different methods. 
\subsection{Smooth $W$}
Let us start with the smooth case.
\begin{fact}\label{fctsmtw}
Assume that $c,d\in\Z_{\ge1},$ and $R=\Z$ or $R=\Z/n\Z$ with $n\ge 2$. 
If $W$ is a closed smooth $(d-1)$-dimensional submanifold of $\R^{d+c}$ and $B$ is a smooth $(d-1)$-dimensional integral cycle with $\supp B\s W,$ and $T$ is an $R$-area-minimizing flat chain with boundary $$\pd T=B,$$ then for any point $p\in\R^{d+c}$, we have $\theta_T(p)\le C_{B},$ with $C_{B}$ a positive real number depending only on $B.$
\end{fact}
If $W$ is smooth, then Lemma \ref{lemmondstb} follows from Fact \ref{fctsmtw} by setting $B=\pd\Si$. We want to emphasize that Fact \ref{fctsmtw} includes the case of $d=1$.
\begin{proof}
	Let $T\in\mn^R_\de([B]).$ We have
	\begin{align}\label{equmb}
		\ms^R_\de(T)=\no{[B]}^R\le\no{[B]}^{\Z}.
	\end{align} We will use both Allard's interior and Allard's boundary monotonicity formula for varifolds stationary outside $W$. The goal is to turn the uniform mass bound (\ref{equmb}) into the uniform density bound (\ref{equfdst})
	
First let us note that the mass measure $V_T$ of $T$ is a stationary integral varifold in $W\cp,$ since any diffeomorphism of $\R^{d+c}$ which is identity on $W$ must increase the mass. By Allard's boundary monotonicity formula \cite[(2) Theorem of Section 3.4]{WAb}, there exists $s_{[B]}>0,c_{[B]}>0$ depending only on $B$, so that for any $0<r\le s$, and any $p\in W$,
	\begin{align}\label{bdmon}
		\exp(c_{[B]}r)V_T(\mathbf{B}_r(p))r^{-d}
	\end{align}
	is monotonically increasing. We want to remark that though Allard commented \cite[Section 3.3]{WAb} that he assumes the dimension of the varifold is at least $2$, but the proof of \cite[(2) Theorem of Section 3.4]{WAb} did not use that assumption. Also, Allard did not assume that the boundary is connected. Thus, Allard's boundary monotonicity formula does apply to all cases we consider in this proof.	
	
	This implies that 
	\begin{align}\label{monpb}
		\ta_T(p)\le \exp(c_{[B]}s_{[B]})s_{[B]}^{-d}\ms^R(T)\le\exp(c_{[B]}s_{[B]})s_{[B]}^{-d}\no{[B]}^{\Z},
	\end{align}
	for any point $p\in  W.$

	On the other hand, for points $p$ at least $\frac{1}{2}s_{[B]}$ away from $ W$, by Allard's interior monotonicity formula in \cite[(2) Corollary of Section 5.1]{WA}, we have
	\begin{align}\label{monpi}
		\ta_T(p)\le \left(\frac{1}{2}s_{[B]}\right)^{-d}\ms^R(T)=2^ds_{[B]}^{-d}\no{[B]}^{\Z}.
	\end{align}
	For $p$ of distance at most $\frac{1}{2}s_{[B]}$ from $ W,$ let $q$ be a point on $ W$ minimizing $|p-q|.$ Then $\mathbf{B}_{|p-q|}(p)\s \mathbf{B}_{2|p-q|}(q),$ by Allard's interior monotonicity formula in \cite[(2) Corollary of Section 5.1]{WA}, we have
	\begin{align}
		\ta_T(p)\le& |p-q|^{-d}V_T(\mathbf{B}_{|p-q|}(p))\\\le& |p-q|^{-d}V_T(\mathbf{B}_{2|p-q|}(q))\label{monpib1}.
	\end{align}On the other hand, by Allard's boundary monotonicity formula \cite[(2) Theorem of Section 3.4]{WAb}, we have
	\begin{align}
		&|p-q|^{-d}V_T(\mathbf{B}_{2|p-q|}(q))\\=&2^d\exp(c_{[B]}(-2|p-q|))\exp(c_{[B]}(2|p-q|))(2|p-q|)^{-d}V_T(\mathbf{B}_{2|p-q|}(q))\\\le& 2^d\exp(c_{[B]}(s_{[B]}-2|p-q|))s_{[B]}^{-d}V(\mathbf{B}_{s_{[B]}}(q))\\\le&2^d\exp(c_{[B]}s_{[B]})s_{[B]}^{-d}\no{[B]}^{\Z}.\label{monpib2}
	\end{align}
	Bringing (\ref{monpib1}) into (\ref{monpib2}), we deduce that
	\begin{align}\label{monpib}
		\ta_T(p)\le 2^d\exp(c_{[B]}s_{[B]})s_{[B]}^{-d}\no{[B]}^{\Z}.
	\end{align}
	Combining all three cases, (\ref{monpb}) (\ref{monpi}) and (\ref{monpib}), we are done by setting $$C_B=2^d\exp(c_{[B]}s_{[B]})s_{[B]}^{-d}\no{[B]}^{\Z}.$$
\end{proof}	
\subsection{Subanalytic $W$}\label{secsubw}
For the case of subanalytic $W$, our goal is to prove the following lemma
\begin{fact}\label{fctsubw}Assume that $c,d\in\Z_{\ge1},$ and $R=\Z$ or $R=\Z/n\Z$ with $n\ge 2$. 
	If $W$ is a compact $(d-1)$-dimensional subanalytic set of $\R^{d+c}$ and $B$ is a $(d-1)$-dimensional integral cycle with $\supp B\s W,$ and $T$ is an $R$-area-minimizing flat chain with boundary $$\pd T=B,$$ then for any point $p\in\R^{d+c}$, we have $\theta_T(p)\le C_{B},$ with $C_{B}$ a positive real number depending only on $B.$
\end{fact}we need two ingredients
\begin{enumerate}
	\item use Gromov-White extended monotonicity formula to obtain a priori density upper bound by the mass of spherical projection of the boundary
	\item use Federer's spherical integral geometry formula to bound the mass of spherical projection of the boundary
\end{enumerate}
Let us start with the first ingredient.
\subsubsection{Gromov-White exterior density bound}
\begin{defn}
	For any point $p\in\R^{d+c}$, define $\pi_p$ to be the orthogonal projection of $\R^{d+c}\setminus p$ to the unit sphere $\mathbf{S}_1(p)$ centered at $p$, i.e.,
	\begin{align*}
		\pi_p(x)=p+\frac{1}{|x-p|}(x-p).
	\end{align*} 
\end{defn}
\begin{fact}\label{fctdsttb}
	For any point $p\in \R^{d+c},$ we have
	\begin{align*}
		\theta_T(p)\le \frac{\ms^R\left(\pi_p\left( B^R\right)\right)}{\hm^{d-1}(\mathbf{S}_1(0))}.
	\end{align*}
\end{fact}
Here $\pi_p\left( B^R\right)$ is the pushforward of $R$-flat chain $ B^R$ by $\pi_p$.
\begin{proof}
	Apply Gromov's \cite[Section 8.1-8.2]{MGfrm} extended monotonicity formula proved in \cite[Section 9.2, 9.3]{EWW}. We deduce that \cite[Theorem 1.3]{EWW}
	\begin{align*}
		\theta_T(p)\le \theta_{\operatorname{Cone}_p B^R}(p)
	\end{align*}Here $\operatorname{Cone}_p B^R$ denotes the cone over $\pd \Si$ centered at $p.$ Geometrically $\operatorname{Cone}_p B^R$ is formed by unions of rays starting from $p$ and passing through points in $\supp B^R$, counted with multiplicities in $R$ induced by $\pd \Si$. In other words, $\operatorname{Cone}_p B^R$ defined is by the pushforward of $ B^R\times[0,\infty)\s \R^{d+c}\times \R$ to $\R^{d+c}$ under the map
	\begin{align*}
		(x,y)\mapsto p+(x-p)y.
	\end{align*}
	By definition, $\operatorname{Cone}_p B^R$ is homogeneous and the slice of $\operatorname{Cone}_p B^R$ by the unit sphere centered at $p$ is $\pi_p\left( B^R\right)$, so we have
	\begin{align*}
		\theta_{\operatorname{Cone}_p B^R}(p)=\frac{\ms^R\left(\pi_p\left( B^R\right)\right)}{\hm^{d-1}(\mathbf{S}_1(0))}.
	\end{align*}
	We are done
\end{proof}
\subsubsection{Federer's integral geometry bound}
We will use integral geometry to bound $\ms^R\left(\pi_p\left( B^R\right)\right).$ Let us first define the density upper bound $|B^R|$
\begin{defn}\label{fctgrom}
	Use $\left|B^R\right|$ to denote the maximum density of $ B^R,$ i.e.,
	\begin{align*}
		\left|B^R\right|=\sup_{p\in\R^{d+c}}\theta_{ B^R}(p).
	\end{align*}
	When we are in the second case of Assumption \ref{assumpmb}, use $\left|[\Si]^R\right|$ to denote the maximum density of $\pd \Si^R,$ i.e.,
	\begin{align*}
		\left|[\Si]^R\right|=\left|\pd[\Si]^R\right|.
	\end{align*}
\end{defn}	
\begin{fact}
	$\left|B^R\right|$ is finite and depends only on $B^R.$\end{fact}When  we are in the second case of Assumption \ref{assumpmb}, we deduce that $\left|[\Si]^R\right|$ is finite and depends only on $\Si^R.$

\begin{proof}
	Apply \cite[Proposition 3.0.6]{GVvwl}, we deduce that $\sup_{p\in\R^{d+c}}\theta_W(p)$ is finite. The proposed fact then follows from Lemma \ref{fcthomr}.
\end{proof}	
\begin{lem}
	For any point $p\in \R^{d+c}$, we have 
	\begin{align}
		\label{eqmsr1}		\ms^R_\de\left(\pi_p\left( B^R\right)\right)=&\hm^{d-1}(\mathbf{S}^{d-1})\frac{1}{2}\int_{O(d+c)}\int_{\pi_p(W)\cap g.S^c}\theta_{\pi_p\left( B^R\right)}d\mathcal{H}^0 dg\\\le& \hm^{d-1}(\mathbf{S}^{d-1})\frac{1}{2}\mu_W\left|B^R\right|.\label{eqmsr2}
	\end{align}
\end{lem}	
The above lemma dates back to Gromov \cite[Section 8.1 Example (2)]{MGfrm}.
\begin{proof}
	Let us first prove (\ref{eqmsr1}).	Recall Fact \ref{fctvols}.	Fix a totally geodesic $c$-dimensional sphere $S^c$ in the unit sphere $\mathbf{S}^{d+c-1}(p)$ centered at $p$.	Apply the spherical integral geometry formula \cite[3.2.48 Theorem]{HF} with $A= \pi_p(W)$, $\ai=\theta_{\pi_p\left( B^R\right)}$, $B=S^c$, $\beta=1$, we deduce that
	\begin{align}
		\label{eqintg}\int_{O(d+c)}\int_{\pi_p(W)\cap g.S^c}\theta_{\pi_p\left( B^R\right)}(q)d\mathcal{H}^0(q) dg=\frac{\Gamma\left(\frac{d}{2}\right)\Ga\left(\frac{c+1}{2}\right)}{2\Ga\left(\frac{1}{2}\right)^{d+c+1}}\int_{\pi_p(W)}\theta_{\pi_p\left( B^R\right)}(q)d\mathcal{H}^{d-1}(q)\int_{S^c}d\mathcal{H}^{c}.
	\end{align}
	By definition of mass, we have	\begin{align}\label{eqintm}
		\ms^R_\de\left( B^R\right)=\int_{\pi_p(W)}\theta_{\pi_p\left( B^R\right)}(q)d\mathcal{H}^{d-1}(q).
	\end{align}
	On the other hand, the other factors on the right hand side of (\ref{eqintg}) evaluates to 	\begin{align}
		&\frac{\Gamma\left(\frac{d}{2}\right)\Ga\left(\frac{c+1}{2}\right)}{2\Ga\left(\frac{1}{2}\right)^{d+c+1}}(c+1)\frac{\Ga\left(\frac{1}{2}\right)^{c+1}}{\Ga\left(\frac{c+1}{2}+1\right)}
		=\Gamma\left(\frac{d}{2}\right)\frac{\Ga\left(\frac{1}{2}\right)^{c+1}}{\Ga\left(\frac{1}{2}\right)^{d+c+1}}\frac{c+1}{2}\frac{\Ga\left(\frac{c+1}{2}\right)}{\Ga\left(\frac{c+1}{2}+1\right)}\\
		=&\frac{\Gamma\left(\frac{d}{2}\right)}{\Ga\left(\frac{1}{2}\right)^{d}}=\frac{\Ga\left(\frac{(d-1)+1}{2}+1\right)/\frac{((d-1)+1)}{2}}{\Ga\left(\frac{1}{2}\right)^{(d-1)+1}}\\=&\frac{2}{\hm^{d-1}(\mathbf{S}^{d-1})}.\label{eqsvol}
	\end{align}
	Thus, bringing (\ref{eqsvol}) and (\ref{eqintm}) to (\ref{eqintg}), we deduce that (\ref{eqmsr1}) holds:
	\begin{align}\label{eqmst}
		\ms^R_\de(\pi_p\left( B^R\right))
		=&\hm^{d-1}(\mathbf{S}^{d-1})\frac{1}{2}\int_{O(d+c)}\int_{\pi_p(W)\cap g.S^c}\theta_{\pi_p\left( B^R\right)}(q)d\mathcal{H}^0(q) dg
	\end{align}	
	Now we go on to prove (\ref{eqmsr2}). By the area formula applied to mass, \cite[27.2 Remarks (3)]{LS}, we have
	\begin{align*}\label{eqdsts}
		\theta_{\pi_p\left( B^R\right)}(q)\le |\pi_p\m(q)\cap W^+|\left|[\Si]^R\right|,
	\end{align*}
	where $\pi_p\m(q)\cap W^+$ means the intersection of $\pi_p\m(q)$ with points in $W$ so that the Jacobian of $\pi_p$ is nonzero, i.e., $d\pi_p$ being full rank $(d-1)$. We deduce that 
	\begin{equation}\label{eqpti}
		\int_{\pi_p(W)\cap g.S^c}\theta_{\pi_p\left( B^R\right)}(q)d\mathcal{H}^0(q)\le\left|[\Si]^R\right|\int_{\pi_p(W)\cap g.S^c}|\pi_p\m(q)\cap W^+|d\mathcal{H}^0(q).	
	\end{equation}
	Let $P(g.S^c)$ denote the unique affine $(c+1)$-dimensional plane passing through $p$ and $g.S^c.$ We have
	\begin{align*}
		\pi_p(P(g.S^c)\cap W)=\pi_p(P(g.S^c))\cap \pi_p(W)=\pi_p(W)\cap g.S^c.
	\end{align*}
	This implies that
	\begin{align}\label{eqcnb}
		\int_{\pi_p(W)\cap g.S^c}
		|\pi_p\m(q)\cap W^+|d\mathcal{H}^0(q)=\#|	P(g.S^c)\cap W^+|
	\end{align}
	Bringing (\ref{eqcnb}) into (\ref{eqpti}), we deduce that 
	\begin{align}\label{eqdsf}
		\int_{\pi_p(W)\cap g.S^c}\theta_{\pi_p\left( B^R\right)}(q)d\mathcal{H}^0(q)\le\left|[\Si]^R\right|\#|	P(g.S^c)\cap W^+|
	\end{align}		
	Apply Lemma \ref{fctsub} by translating $p$ to the origin, we deduce that for Haar almost every $g\in O(d+c)$, we have
	\begin{align}\label{eqsubf}
		\#|	P(g.S^c)\cap W^+|\le\#|	g.P(S^c)\cap W|\le \mu_W.
	\end{align}
	Combining (\ref{eqsubf}) (\ref{eqdsf}) and (\ref{eqmst}), we deduce (\ref{eqmsr2}).
\end{proof}	
\subsubsection{Wrapping up the proof}
Combining Fact \ref{fctgrom} and Fact \ref{fctdsttb}, we deduce the following fact
\begin{fact}\label{fctsubts}
	For any point $p\in \R^{d+c}$, we have 
	\begin{align*}
		\theta_T(p)	\le\frac{1}{2}\mu_W\left|B^R\right|.
	\end{align*}
	When we are in the second case of Assumption \ref{assumpmb}, for any $T\in\mn^R_\de([\Si]^R)$ we have
	\begin{align*}
		\theta_T(p)	\le\frac{1}{2}\mu_W\left|[\Si]^R\right|.
	\end{align*}
\end{fact}Fact \ref{fctsubw} and thus Lemma \ref{lemmondstb} with $W$ subanalytic in Assumption \ref{assumpmb} follow directly. 
\subsection{$W$ a pair of planes}
We need a refinement of Lemma \ref{lemmondstb}  when we are dealing with a pair of planes. Recall Assumption \ref{assumpplane}.
\begin{defn}
	Define 
	\begin{align*}
		W_{P,Q}=\pd(P\res \mathbf{B}_1(0)), W_Q=\pd(Q\res \mathbf{B}_1(0))
	\end{align*} Set $ B_{P,Q}=W_P+W_Q$.
\end{defn}
We will also abuse notations and use $W_{P},W_{Q},W_{P,Q}$ to denote their supports, respectively, i.e., the intersections of $P,Q$ with the unit sphere $\mathbf{S}_1(0)$ and the union of the two intersections. 
\begin{fact}\label{fctplane}
	For $p\in \R^{d+c},$ and $T\in\mn^R_\de([\Si_{P,Q}])$ we have
	\begin{align*}
		\theta_T(p)\le 2
	\end{align*}and strict inequality holds for $p\not=0.$
\end{fact}
\begin{proof}
	By definition, we have $$\left|[\Si_{P,Q}]^R\right|=1.$$ Recall Fact \ref{fctsub} with $p$ translated to the origin. If an affine $(c+1)$-dimensional plane $V$ intersects $W_{P,Q}$ at finitely many points, then the cardinality $\#|V\cap W_{P,Q}|$ is at most $4$, i.e., $V$ intersecting $P,Q$ both at a line. Thus, we have $$\mu_{W_{P,Q}}=4.$$
	
	By Fact \ref{fctsubts}, we deduce that	\begin{align}\label{eqtt2}
		\theta_T(p)\le 2.\end{align} 
	
	To prove that strict inequality holds in (\ref{eqtt2}) for $p\not=0$, we prove the contrapositive.
	
	In other words, we will prove that equality in (\ref{eqtt2}) implies $p=0$.
	
	We need to recall the proof of Fact \ref{fctsubts}. Recall equality (\ref{eqmst}) Equality in (\ref{eqtt2}) holds implies that for Haar a.e.~ $g\in O(d+c)$, we have
	\begin{align}
		\label{eqdstp}	\int_{\pi_p(W_P)\cap g.S^c}\theta_{\pi_p\left(\pd \Si^R_{P,Q}\right)}d\mathcal{H}^0=2,\\
		\label{eqdstq}	\int_{\pi_p(W_Q)\cap g.S^c}\theta_{\pi_p\left(\pd \Si^R_{P,Q}\right)}d\mathcal{H}^0=2.
	\end{align}
	If $p\not\in P$, then take a flat coordinate system $(x_1,\cdots,x_{d+c})$ so that $P$ is spanned by $x_1,\cdots,x_d$-axes vectors and $p=(p_1,\cdots,p_{d+c})$ with $p_{d+c}>0.$ Consider the affine $(c+1)$-dimensional plane $P'_p$ parameterized by
	\begin{align*}
		P'_p=\{(p_1+x_1,\cdots,p_{c+1}+x_{c+1},p_{c+2},\cdots,x_{d+c})|(x_d,\cdots,p_{d+c})\in\R^{c+1}\}.
	\end{align*}
	Then $P'_p\cap P=\es.$ Thus, \begin{align*}
		\pi_p(P'_p)\cap \pi_p(W_P)=\es
	\end{align*} and there exists an open set $U(\pi_p(P'_p))$ containing $\pi_p(P'_p)$ so that
	\begin{align*}
		U(\pi_p(P'_p))\cap \pi_p(W_P)=\es.
	\end{align*}  However, $\pi_p(P'_p)$ is a totally geodesic $c$-dimensional sphere in $\mathbf{S}_1(p)$, so there exists $g_P$ so that  $g_P(S^c)=\pi_p(P'_p).$ For a neighborhood $U(g_P)$ of $g_P$ in $O(d+c)$ we have $h(\pi_p(P'_p))\s 	U(\pi_p(P'_p)).$ In other words, for $h\in U(g_P)$, we have $\pi_P(W_P)\cap h(S^c)=\es.$ 
	
	Thus, if (\ref{eqdstp}) holds for Haar a.e.~ $g\in O(d+c)$,  we must have $p\in V.$ The same argument works for $Q$ and we deduce that if (\ref{eqdstq}) holds then $p\in Q.$ We are done.
\end{proof}
\section{Regularity improvement and lifts to $\Z$}\label{secreg}
In this section we will carry out the second step outlined in Section \ref{secsof}.

First, let us unify Assumption \ref{assumpm} and Assumption \ref{assumpmb}.
\begin{assump}\label{assumpmc}
Consider the quadruples of configures $(M^{d+c},W,g,\Om_g)$ with $d,c\in\Z_{\ge 1}$. Assume that one of the followings holds
\begin{enumerate}
	\item (Assumption \ref{assumpm}) $M$ is a $(d+c)$-dimensional compact closed smooth manifold, $W$ is the empty set, $[\Si]\in H_d(M,W,\Z)$ is a $d$-dimensional integral homology class, and $g$ is a smooth metric, $\Om_g$ defined in Lemma \ref{lemmon},
\item (Assumption \ref{assumpmb}) $M=\R^{d+c}$ with $d\ge 2,$ $W$ is a $(d-1)$-dimensional compact closed smooth submanifold or a $(d-1)$-dimensional compact subanalytic  set, $[\Si]\in H_d(M,W,\Z)$  is a $d$-dimensional relative integral homology class, $g=\de$ is the standard flat metric and $\Om_g=\{g\}$.
\end{enumerate}
\end{assump}
Under Assumption \ref{assumpmc}, we will prove in this section that
\begin{fact}\label{fctfrmn}
Assume that $n\ge 3C_{[\Si]}$. For any metric $h\in \Om_g$, let $$T\in\mn^{\Z/n\Z}_h([\Si]).$$	Then any $\Z$-rectifiable current $S$ that is Federer's ``representative" modulo $n$ of $T$ satisfies 
\begin{align*}
	[\pd S]&\in H_{d-1}(W,\Z),\\\dim_{\mathcal{H}}\sing S\setminus W&\le d-2.
\end{align*}
In other words, $S$ is a relative cycle in $H_d(M,W,\Z)$ and the singular set of $S$ outside of $W $ is of Hausdorff dimension at most $(d-2).$
\end{fact}
Note that the above fact implies Corollary \ref{coras} for $M$ compact and $W=\es.$
\subsection{Proof of Fact \ref{fctfrmn}}\label{subsectionfed}Recall Lemma \ref{lemdst} and Lemma \ref{lemmondstb}. For any point $p\in M,$ by $n\ge 3C_{[\Si]}$, we have
\begin{align}\label{indst}
	\ta_T(p,h)<\frac{n}{2}.
\end{align}For any $\Z$-rectifiable current $S$ which is a Federer's ``representative" modulo $n$ of $T$, by Fact \ref{fctfcr}, $\pd S$ is a $\Z$-rectifiable current. By Lemma \ref{lemdv}, we have\begin{align}\label{eqsing}
	\spt \pd S\s\sing T.
\end{align}
By Lemma \ref{lemsingt} and Fact \ref{fctbd} applied to (\ref{eqsing}), we deduce that $\pd S$ is supported on $W.$ By Lemma \ref{fcthomr}, we are done.\section{Asymptotic analysis using Federer's theory of real flat chains}\label{secfed}In this section we will carry out the third step outlined in Section \ref{secsof}. Again assume Assumption \ref{assumpmc}. Recall the convention Definition \ref{defntsw}. We will prove the following fact
\begin{fact}\label{fctm}
Using the same notation as in Fact \ref{fctfrmn}, there exists a positive real number $N_{[\Si],g}$, so that for $n\ge N_{[\Si],g},$  we have $[S]\in[\Si]+nG_d(M,W,\Z)$. 	Furthermore, if $\tau_d|n,$ then $nG_d(M,W,\Z)=0$ and we have $[S]=[\Si].$
\end{fact}
	\subsection{Proof of Fact \ref{fctm}}\label{subsectiona} 
Recall Fact \ref{fctfrmn}. Since $S$ is a relative cycle, it is an integral current and belongs to a relative homology class $$[S]\in H_d(M,W,\Z).$$

Since $S$ is Federer's ``representative" modulo $n$ of $T$, we deduce that 
\begin{align}\label{eqsip}
	[S]^{\Z/n\Z}=[\Si]^{\Z/n\Z}.
\end{align}	
Let us first show that
\begin{fact}\label{fctsm}
	$S\in\mn_h^\Z([S])$ and $S\in \mn_h^{\Z}([\Si]+nH_d(M,W,\Z)).$
\end{fact}
\begin{proof}Let $Z$ be another integral current representing $[S].$ 	
	
	 By definition of Federer's ``representative" modulo $n$ (Fact \ref{fctrmn}), we have
	\begin{align}\label{eqn1}
		\ms_h^{\Z}(S)=\ms_h^{\Z/n\Z}(T).
	\end{align}On the other hand, by area-minimality of $T$ and that the canonical homomorphism $\Z\to \Z/n\Z$ is norm non-increasing, we deduce that
	\begin{align}\label{eqn2}
		\ms_h^{\Z/n\Z}(T)\le\ms_h^{\Z/n\Z}(Z^{\Z/n\Z})\le \ms_h^{\Z}(Z). 
	\end{align}
	Combining (\ref{eqn1}) and (\ref{eqn2}), we deduce that $S$ is $\Z$-area-minimizing in $[S].$ 
	
	By Fact \ref{fctrmn}, we have
	\begin{align}\label{eqssp}
		\no{[S]}_h^{\Z}=		\ms_h^{\Z}(S)=\ms_h^{\Z/n\Z}(T)=\no{[\Si]}_h^{\Z/n\Z}\le \inf_{[\sigma]\in [\Si]+nH(M,\Z)}\no{[\si]}_h^{\Z}.
	\end{align}
	As $[S]\in [\Si]+nH(M,W,\Z)$. We are done.
\end{proof}
Now, let us separate into two cases. First, when the $d$-th betti number of $M$ is zero, i.e., $b_d=0$, then $H_d(M,W,\Z)=G_d(M,W,\Z)$ and Fact \ref{fctsm} implies Fact \ref{fctm}.

From no on we assume that the $d$-th betti number $b_d$ is positive. Then we can use the asymptotic theory in developed in Section \ref{ssnorm} using Federer's theory for real flat chains.	Expand $[\Si'],[\Si]$ with respect to the basis in Assumption \ref{decompbs}, we have
\begin{align}\label{eqsp}
	[S]=&\sum_{j=1}^{b_d}\ga_jf_j+\sum_{i\in I}\eta_iv_i,\\
\label{eqsi}
	[\Si]=&\sum_{j=1}^{b_d}\ai_jf_j+\sum_{i\in I}\be_iv_i,
\end{align}
with each $\ai_j,\ga_j\in \Z,\be_i,\eta_i\in\Z\cap[0,p_j^{\nu_j}].$ 

From $[\Si]^{\Z/n\Z}=[S]^{\Z/n\Z},$ and Lemma \ref{lemnt}, we deduce that for $1\le j\le b_d,i\in I$ we have
\begin{align*}
	\ga_j=&\ai_j+k_jn,\\
	\eta_i=&\be_i+l_in,
\end{align*}with $k_j,l_i\in\Z.$
By Fact \ref{fctncmp}, Fact \ref{fctncmpb}, Fact \ref{fctfc}, second bullet of Lemma \ref{fcthomr}, Lemma \ref{lemnorm}, we have
\begin{align*}
	&\no{[S]}_h^{\Z}\ge\no{[S]}_h^{\R}
	=\no{\sum_{j=1}^{b_d}\ga_jf_j}^{\R}_h\\
	=&\no{\sum_{j=1}^{b_d}(\ai_j+k_jn)f_j}^{\R}_h\\
	\ge&(C(h))\m\sum_{j=1}^{b_d}\nog{(\ai_j+k_jn)f_j}_h\\
	\ge&(C(h))\m \sum_{j=1}^{b_d}\big(|k_j|n-|\ai_j|\big)\nog{f_j}_h.
\end{align*}where $C(h)$ is a continuous function from the space of Riemannian metrics to $\R_{>0}$. By (\ref{eqssp}), we deduce that
\begin{align}\label{eqsin}
	\no{[S]}_h^{\Z}\ge (C(h))\m \sum_{j=1}^{b_d}\big(|k_j|n-|\ai_j|\big)\nog{f_j}_h.
\end{align}
By shrinking $\Om_g$ if necessary to a smaller open set $\Om_{[\Si],g}$, we can assume that
\begin{align*}
	\inf_{h\in \Om_g}C(h)\m>&0,\\\inf_{h\in\Om_g}\min_{1\le j\le b_d}\nog{f_j}_h>&0,\\
	\sup_{h\in \Om_g}\no{[\Si]}^{\Z}_h<&\infty.
\end{align*}
\begin{defn}\label{defnnsg}
	Set 
	\begin{align*}
		N_{[\Si],g}=3\max\left\{C_{[\Si]},\frac{	\sup_{h\in \Om_g}\no{[\Si]}^{\Z}_h}{\left(\inf_{h\in \Om_g}(C(h))\m\right)\left(\inf_{h\in\Om_g}\min_{1\le j\le b_d}\nog{f_j}_h\right)}+\max_{1\le j\le b_d}|\ai_j|\right\}.
	\end{align*}
\end{defn}
Apply Definition \ref{defnnsg} for $n\ge N_{[\Si],g}$. If for some $j$ we have $k_j\not=0,$ then
\begin{align*}
	&(C(h))\m \sum_{l=1}^{b_d}\big(|k_l|n-|\ai_l|\big)\nog{f_l}_h\\	\ge &3\frac{(C(h))\m\nog{f_j}_h	}{\left(\inf_{h\in \Om_g}(C(h))\m\right)\left(\inf_{h\in\Om_g}\min_{1\le l\le b_d}\nog{f_l}_h\right)}\sup_{h\in \Om_g}\no{[\Si]}^{\Z}_h\\&+2(C(h))\m\nog{f_j}_h	\max_{1\le l\le b_d}|\ai_l|\\
	\ge&3 \no{[\Si]}^{\Z}_h,
\end{align*} which is impossible by (\ref{eqsin}). We deduce that $\sum_{j=1}^{b_d}|k_j|=0.$ We are done.
\section{Proof of Theorems \ref{thmv}, \ref{thmvg}, \ref{thmvb}, \ref{thmcpm}, and Corollary \ref{coras}}\label{secpm}
	\subsection{Proof of Theorems \ref{thmv}, \ref{thmvg}, \ref{thmvb}}
Let us first prove Theorem \ref{thmv} and \ref{thmvb}.	The first bullet of Theorem \ref{thmv} follows from Fact \ref{fctm}. To prove the second bullet of Theorem \ref{thmv} from Fact \ref{fctm}, it suffices to show that
\begin{fact}\label{lemssp}
	Using the same notation as in Fact \ref{fctrmn},  $S$ as a Federer's ``representative" modulo $n$ is unique. In other words, if $S'$ is another Federer's ``representative" modulo $n$ of $T$, then we have	$S=S'$.
\end{fact}
	\begin{proof}		
First,  $S^{\Z/n\Z}=(S')^{\Z/n\Z}=T$ implies $(S'-S)^{\Z/n\Z}=0$.  As $S$ and $S'$ has the same support and they both have boundaries supported in $W$, they must have the same regular set $$\reg S=\reg S'=\reg S\cap \reg S'.$$ In each connected component of $\reg S\cap \reg S'$, $S'-S$ equals a smooth submanifold counted with multiplicity divisible by $n.$  by (\ref{indst}), $S$ and $S'$ has density strictly less than $\frac{n}{2}$ Hausdorff $d$-dimensional a.e.~. Thus, $S-S'$ has density strictly less than $n$ Hausdorff $d$-dimensional a.e.~. 	This implies that $S$ and $S'$ coincides on their regular set. Since $\sing S\setminus W=\sing S'\setminus W=\sing $ has Hausdorff dimension at most $(d-2)$, we deduce that $S=S'.$\end{proof}
Theorem \ref{thmvg} follows directly from Fact \ref{fctsm}, Fact \ref{fctm} and Lemma \ref{lemssp}. 
	\subsection{Proof of Theorem \ref{thmcpm}}Let us show how Theorem  \ref{thmcpm} reduces to Lemma \ref{lemcpm}. 
	
	The argument in	\cite[Section 5]{GLac} shows that it suffices to consider pairs of planes that only intersect at the origin. For $d\le c,$ \cite[Lemma 7.5]{HL} shows that a pair of $d$-dimensional planes intersecting in $\R^{d+c}$ only at the origin must be contained in a Euclidean subspace of dimension $2d$.
	
	Thus, we only need to prove Lemma \ref{lemcpm}.
	\subsubsection{Proof of Lemma \ref{lemcpm}}\label{secpfred}
By Fact \ref{fctncmpb}, necessity is clear. We need to deal with sufficiency. In the rest of this subsection, we will prove that if $P+Q$ is $\Z$-area-minimizing, then $P^{\Z/n\Z}+Q^{\Z/n\Z}$ is $\Z/n\Z$-area-minimizing  with $n\ge 4.$

Again we use the three steps outlined in Section \ref{secsof}.
\subsubsection{First step and second step: density bound and lift to integral currents}
Let $T\in\mn_\de^{\Z/n\Z}(W_P+W_Q)$. By Fact \ref{fctplane}, we deduce that $\theta_p<2\le \frac{n}{2}$ for any point $p\in\R^{d+c}$ unless $p=0.$ Apply Lemma \ref{lemsingt}, we deduce that $$\dim\sing T\cap W\cp=\max\{d-2,\dim\{p\}\}=d-2.$$ By Fact \ref{fctfcr} and (\ref{eqsing}), we deduce that $\pd S$ restricted to $\sing T$ is a $(d-1)$-dimensional $\Z$-flat chain supported on a set of Hausdorff dimension at most $(d-2)$. Thus, we deduce that $\pd S$ must be supported on $W.$ We are done. In other words,
\begin{fact}\label{fctfrmnp}
	Any $\Z$-rectifiable current $S$ that is Federer's ``representative" modulo $n$ of $T$ determines a homology class $\pd S\in H_{d-1}(W_{P,Q},\Z)$ provided $n\ge 4$.
\end{fact}
\subsubsection{Third step: determining the homology class}
For this step we use a direct calibration bound instead of relying on Federer's asymptotics. By Lemma \ref{fcthomr}, we deduce that
\begin{align*}
	\pd S=aW_P+bW_Q\textnormal{ where }a,b\equiv 1\mod n.
\end{align*}By \cite[Remarks right after 2.2 Theorem]{FMcalv}, we have
\begin{align*}
	\no{P\res\mathbf{B}_1^{d}(0)}^{\Z/n\Z}_\de=\no{Q\res\mathbf{B}_1^{d}(0)}^{\Z/n\Z}_\de=\mathcal{H}^d(\mathbf{B}_1^{d}),
\end{align*}
and the unique $\Z/n\Z$-area-minimizing flat chain spanned by $W_P$ and $W_Q$ are $P\res \mathbf{B}_1(0)$ and $Q\res \mathbf{B}_1(0)$.

Our goal is to prove that $a=b=1.$ First, we have
\begin{align}\label{eqbbdu}
	\ms^\Z_\de(S)=\ms^{\Z/n\Z}_\de(T)\le\no{P\res\mathbf{B}_1^{d}(0)}^{\Z/n\Z}_\de+\no{Q\res\mathbf{B}_1^{d}(0)}^{\Z/n\Z}_\de=2\mathcal{H}^d(\mathbf{B}_1^{d}).
\end{align}
If $a,b$ are of the same sign, then by \cite{GLac,DN}, there is a constant $d$-form calibrating both $P$ and $Q.$ Thus, we have
\begin{align*}
	\ms^\Z(S)=&\no{aP\res\mathbf{B}_1^{d}(0)+bQ\res\mathbf{B}_1^{d}(0)}^{\Z}_\de\\=&\no{aP\res\mathbf{B}_1^{d}(0)+bQ\res\mathbf{B}_1^{d}(0)}^{\R}_\de\\=&|a|\no{P\res\mathbf{B}_1^{d}(0)}^{\R}_\de+|b|\no{P\res\mathbf{B}_1^{d}(0)}^\R_\de\\=&(|a|+|b|)\mathcal{H}^d(\mathbf{B}_1^{d}).
\end{align*}
By (\ref{eqbbdu}) we deduce that
\begin{align*}
	|a|+|b|\le 2.
\end{align*}As $n\ge 4$ and $a,b\equiv 1\mod n$ we deduce that $a=b=1$ when $a,b$ are of the same sign.

To finish the proof, we need to show that 
\begin{fact}\label{fctabimp}
 $a,b$ cannot be of of opposite signs.
\end{fact}Let us suppose the opposite. Assume that $a>0,b<0$. Set
\begin{align*}
	a=1+na',b=1-nb',
\end{align*}with $a'\in\Z_{\ge0},b'\in\Z_{\ge 1}$. Let $P^\ast$ and $Q^\ast$ denotes the unit simple $d$-form dual to the planes $P$ and $Q$, respectively. By \cite[Section II Proposition 7.10]{HL}
\begin{fact}\label{fctpqcal}
	The forms $P^\ast$ and $Q^\ast$ are calibration forms calibrating only $P$ and $Q$, respectively.
\end{fact}
We have
\begin{align}
		\ms^{\Z}(S)\ge S\left(P^\ast\right)\label{eqsp0}.\end{align}
By Fact \ref{fctpqcal},	equality in (\ref{eqsp0}) holds if $P\du$ calibrates $S.$

On the other hand, as $\pd S=aW_p+bW_q,$ by Stokes theorem, we deduce that
\begin{align}
	S\left( P^\ast\right)=a(P\res \mathbf{B}_1(0))\left( P^\ast\right)+b(Q\res \mathbf{B}_1(0))\left( P^\ast\right).
\end{align}
Using $a>0,b<0$ and $(Q\res \mathbf{B}_1(0))\left( P^\ast\right)\le\ms_\de^{\Z}(Q\res\mathbf{B}_1(0))$, we deduce that
\begin{align}
	a(P\res \mathbf{B}_1(0))\left( P^\ast\right)+b(Q\res \mathbf{B}_1(0))\left( P^\ast\right)\ge ((1+na')-(nb'-1))\mathcal{H}^d(\mathbf{B}_1^{d}).
	\label{eqsp},
\end{align} where equality holds if and only if $P^\ast$ calibrates $Q$, which is impossible. Bringing the chain of inequalities from (\ref{eqsp}) back to (\ref{eqsp0}) we deduce that
\begin{align}
\label{ineq1}	\ms^{\Z}_\de(S)>((1+na')-(nb'-1))\mathcal{H}^d(\mathbf{B}_1^{d}).
\end{align}
Similarly we have the chain of inequalities
	\begin{align*}	\ms^{\Z}(S)\ge S\left(Q^\ast\right)=a(P\res \mathbf{B}_1(0))\left(Q^\ast\right)+b(Q\res \mathbf{B}_1(0))\left(Q^\ast\right)\ge ((nb'-1)-(1+na'))\mathcal{H}^d(\mathbf{B}_1^{d}),
	\end{align*}where equalities can hold only if $Q\du$ calibrates $P,$ which is impossible. Thus, we have
	\begin{align}\label{ineq2}
		\ms^{\Z}(S)> ((nb'-1)-(1+na'))\mathcal{H}^d(\mathbf{B}_1^{d}).
	\end{align}
Bringing (\ref{ineq1}), (\ref{ineq2}) and (\ref{eqbbdu}) together, we deduce that
\begin{align}\label{eqab}
	2> |(1+na')-(nb'-1)|=|2+n(a'-b')|.
\end{align}
Since $n\ge 4,$ inequality (\ref{eqab}) can never hold, a contradiction. Thus, we cannot have $a>0,b<0$. Assume $a<0,b>0$, we can repeat the argument by exchanging the notations $P$ and $Q$ and also arrive at a contradiction. This finishes the proof of Fact \ref{fctabimp} and we are done.	
\section{The case of $d=1$ and alternative proof of Theorem \ref{thmcm}}\label{secd=1}
Theorem \ref{thmcm} follows directly from from Theorem \ref{thmvb} for $d\ge 2.$ In this section, we will prove the remaining $d=1$ case of Theorem \ref{thmcm}. Actually we will provide an alternative proof of Theorem \ref{thmcm} that circumvents Federer's theory of real flat chains.

Let us recall Assumption \ref{assumpmb}. Let $B$ be an integral current cycle supported in $W$. Define
\begin{defn}
	Let $T_n$ be a $\Z/n\Z$-area-minimizing flat chain with $\pd T=B^{\Z/n\Z}.$
\end{defn}We need to prove that 
\begin{fact}
There exists $N_B$ depending only on $B$ so that	$T_n$ is an integral current with $\pd T_n=B$ for $n\ge N_B.$
\end{fact}  Recall Fact \ref{fctsmtw} and Fact \ref{fctsubw}. For any point $p\in\R^{d+c},$ we have
\begin{align*}
	\theta_{T_n}(p)\le C_{B}.
\end{align*}
From now on assume $n\ge 3C_B$, By Lemma \ref{lemsingt} and Fact \ref{fctbd}, we deduce that $T_n$ admits a Federer's ``representative" modulo $n$, $S_n,$ so that
\begin{align*}
	\supp\pd S_n\subset W.
\end{align*}
By Lemma \ref{fcthomr} and Lemma \ref{lemnt}, $\pd S_n$ can be represented by a finite simplicial chain so that
\begin{align*}
\pd	S_n=B+nV
\end{align*} for some $(d-1)$-dimensional integral cycle $V$ with $[V]\in H_{d-1}(W,\Z).$

Let $p$ be a regular point of $V$ which must also be a regular point of $B.$ In a neighborhood $U(p)$ of $p,$ we have
\begin{align*}
	(B+nV)\res U(p)=|\theta_B(p)\pm n\theta_V(p)|W\res U(p),
\end{align*} where the sign depends on whether $B,V$ are oppositely oriented or not.

Apply the boundary case of Almgren's stratification of $\Z$-area-minimizing currents \cite{BWst}, we deduce that for Hausdorff $(d-1)$-dimensional almost every $q\in\supp \pd S\cap U(p),$ there is a tangent cone to
$B$ that splits as a product
\begin{align*}
	C\times \R^{d-1},
\end{align*}
where $C$ is a $1$-dimensional cone $\Z$-area-minimizing cone with
\begin{align}\label{eqbdc}
	\pd C=|\theta_B(p)\pm n\theta_V(p)|\{0\}
\end{align}

Set $$N_B=3C_{B}+|B|,$$ where $|B|$ is the maximum density of $B$ defined in Definition \ref{fctgrom}. If $n\ge N_B,$ then
\begin{align*}
|\theta_B(p)\pm n\theta_V(p)|\ge n\theta_n(V)-\theta_B(p)\ge 3C_{B}.
\end{align*}
Since $C$ is a union of rays starting from the origin and with orientations, $C$ must consists of at least $|\theta_B(p)\pm n\theta_V(p)|$ rays, multiplicity counted, in order to have (\ref{eqbdc}). This implies that $C$ has density at least $\frac{3}{2}C_B$. By \cite[Section 3.3]{NWrs}, $C$ and $C\times \R^{d-1}$ have the same desnity, which contradicts Fact \ref{fctsmtw} and Fact \ref{fctsubw}. Thus, $V=0$ and we are done.
\section{Proof of  Theorem \ref{thmvn} and Theorem \ref{thmnc}}\label{secpfs}
	In this section, we will prove Theorem \ref{thmvn} and Theorem \ref{thmnc}. Let us start with Theorem \ref{thmnc}, which is easier.
	\subsection{Proof of Theorem \ref{thmnc}}
	Let us start with the following fact
	\begin{fact}
		A $\Z$-area-minimizing current in a non-zero torsion homology class can never be calibrated. 
	\end{fact}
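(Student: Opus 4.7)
The plan is to exploit the mismatch between the $\Z$-norm and $\R$-norm of a torsion class and then invoke Fact \ref{fctcal}. The key observation is that the real norm is insensitive to torsion, while any nonzero $\Z$-area-minimizer must have strictly positive mass, so the two norms cannot coincide.

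First, I would establish that for a non-trivial torsion class $[\Si]$ one has $\nog{[\Si]}_g = 0$. This is immediate from the second bullet of Fact \ref{fctnrt} (the real norm is nonzero precisely on free classes), or alternatively, from the homogeneity of $\no{\cdot}^{\R}_g$ on $H_d(M,\R)$: if $v[\Si] = 0$ for some positive integer $v$, then
\begin{align*}
v\,\nog{[\Si]}_g = \nog{v[\Si]}_g = \nog{0}_g = 0.
\end{align*}
Alternatively, one may apply Fact \ref{fctfc} with the free summand taken to be zero.

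Next, let $T \in \mn_g^{\Z}\left([\Si]\right)$ be a $\Z$-area-minimizing representative. Because $[\Si]$ is a non-trivial homology class, $T$ is not the zero current, and therefore by the positivity axiom in Definition \ref{defnnm} applied to the mass norm, $\no{[\Si]}^{\Z}_g = \ms^{\Z}_g(T) > 0$. Combining this with the previous step yields the strict inequality
\begin{align*}
\no{[\Si]}^{\Z}_g > 0 = \no{[\Si]}^{\R}_g.
\end{align*}
By Fact \ref{fctcal}, a $\Z$-area-minimizing integral current representing $[\Si]$ is calibrated by an $\R$-flat cochain if and only if the $\Z$-norm and $\R$-norm of $[\Si]$ coincide. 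Since this equality fails, $T$ cannot be calibrated by any $\R$-flat cochain. As every smooth calibration form defines, via the canonical pairing with $\R$-flat chains, a calibration $\R$-flat cochain in the sense of Definition \ref{defncalr}, the same conclusion holds a fortiori for calibrations by smooth forms in the classical Harvey-Lawson sense.

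There is essentially no technical obstacle: the argument is a one-line application of Fact \ref{fctcal} once the vanishing of $\nog{[\Si]}_g$ on torsion is in hand. The only minor subtlety is the degenerate case $[\Si] = 0$, for which the statement is vacuous (the zero current is trivially calibrated by any closed form); I would interpret "torsion homology class" throughout as implicitly meaning non-trivial torsion class, which is clearly the intended reading given the corollary it is meant to support.
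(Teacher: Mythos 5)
Your proof is correct and rests on the same underlying insight as the paper's: a torsion class vanishes in real homology, so no closed $\R$-flat cochain can pair nontrivially with any of its representatives. The execution differs slightly, however. The paper's argument is a single sentence: a calibration is in particular a closed $\R$-flat cochain, and any closed $\R$-flat cochain evaluates to $0$ on a torsion cycle, whereas a calibrated current $T$ would require $\ai(T)=\ms_g^{\R}(T)>0$. You instead route through Fact \ref{fctcal}, reducing the question to showing $\no{[\Si]}_g^{\Z}\ne\no{[\Si]}_g^{\R}$, which you verify via the second bullet of Fact \ref{fctnrt} and positivity of the $\Z$-homology norm. This works, but it quietly invokes more machinery than the paper does — Fact \ref{fctcal} itself rests on Facts \ref{fcthl} and \ref{fctcalcc}, i.e., on the existence of calibrating $\R$-flat cochains for $\R$-minimizers — whereas the paper's argument needs only the defining properties of a closed cochain and the vanishing of $[\Si]^{\R}$. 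Your route does have the small pedagogical merit of making explicit the quantitative gap $\no{[\Si]}_g^{\Z}>0=\no{[\Si]}_g^{\R}$, which connects nicely to the Lavrentiev-gap discussion later in the paper. One minor phrasing point: when you write that ``by the positivity axiom in Definition \ref{defnnm} applied to the mass norm, $\no{[\Si]}^{\Z}_g=\ms^{\Z}_g(T)>0$,'' what you are really using is that the minimizer $T$ exists and is nonzero (so that the infimum is attained at a positive value); the clean reference for the positivity of $\no{\cdot}_g^{\Z}$ on nonzero classes is Fact \ref{factnorm}, not Definition \ref{defnnm} directly. This is cosmetic and does not affect correctness.
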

	This follows from the definition of $\R$-homology, as any closed $\R$-flat cochain $\ai$ must evaluate to $0$ on a torsion class.
	
	Thus, to prove Theorem \ref{thmnc}, we only need to consider non-torsion classes. Theorem \ref{thmnc} follows directly from the following lemma.
	\begin{lem}\label{lemcal}
		If $d\ge 1,c\ge 2,$ $[\Si]$ is a non-torsion class and $M$ is an orientable, then for any $\lam>0$, there exists a non-empty open set $\Om'_{[\Si],\lam}$ in the space of Riemannian metrics, depending on $[\Si]$ and $\lam,$ such that for any metric $h\in \Om'_{[\Si],\lam},$ we have
		\begin{align}
			\no{[\Si]}_h^{\Z}>\lam\mu_{[\Si]}\nog{[\Si]}_h.\label{inzr}
		\end{align} 
	\end{lem}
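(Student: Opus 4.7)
My plan is to combine Yongsheng Zhang's construction (Lemma \ref{lemzhang}) applied to a smoothly embedded connected representative of the minimal class $\mu_{[\Si]}[\Si]$ with a homology-class obstruction coming from the very definition of the representable multiplicity $\mu_{[\Si]}$. Since the hypotheses of Lemma \ref{lemthom} hold, there is a smoothly embedded connected closed submanifold $N\subset M$ with $[N]=\mu_{[\Si]}[\Si]$. Fix a parameter $\Lambda>\lam$. Apply Lemma \ref{lemzhang} to $N$ with constant $\Lambda$ in place of its $\lam$, producing a metric $h$, a tubular neighborhood $U(N)$, and a retract $\pi_N:U(N)\to N$ with the listed properties. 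Because $[\Si]$ is a free class, $\mu_{[\Si]}[\Si]$ is a nonzero $\R$-homology class, so by the fourth bullet of Lemma \ref{lemzhang}, $\pi_N^{*}\dvol_N$ extends to a global smooth calibration on $(M,h)$; in particular $N$ is calibrated and hence $\Z$- and $\R$-area-minimizing in its own class. Using Fact \ref{fctcal} and homogeneity of $\nog{\cdot}_h$, this gives
\[
\ms_h^{\Z}(N)=\no{\mu_{[\Si]}[\Si]}_h^{\R}=\mu_{[\Si]}\nog{[\Si]}_h.
\]

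Now let $T\in \mn_h^{\Z}\left([\Si]\right)$. I claim $\spt T\not\subset U(N)$. If instead $\spt T\subset U(N)$, then since $\pi_N$ is a deformation retract, $\pi_{N\ast}T$ is an integral cycle in $N$ homologous to $T$ inside $M$. But $N$ is a connected closed oriented $d$-manifold, so $H_d(N,\Z)=\Z\cdot[N]$, and the constancy theorem forces $\pi_{N\ast}T=k[N]$ for some $k\in\Z$. Pushing to $H_d(M,\Z)$ yields $[\Si]=k\mu_{[\Si]}[\Si]$, i.e.\ $(k\mu_{[\Si]}-1)[\Si]=0$. Since $[\Si]$ is free, $k\mu_{[\Si]}=1$, contradicting $\mu_{[\Si]}\ge 2$ from Definition \ref{defnhom}. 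Therefore the support of $T$ is not contained in $U(N)$, and the first bullet of Lemma \ref{lemzhang} gives
\[
\no{[\Si]}_h^{\Z}=\ms_h^{\Z}(T)>\Lambda\,\ms_h^{\Z}(N)=\Lambda\mu_{[\Si]}\nog{[\Si]}_h>\lam\mu_{[\Si]}\nog{[\Si]}_h,
\]
verifying (\ref{inzr}) for this particular metric $h$.

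Finally, openness is automatic from Fact \ref{fctnr}: both $\no{[\Si]}_h^{\Z}$ and $\nog{[\Si]}_h$ depend continuously on the ambient metric, so the strict inequality just established persists on a nonempty open set $\Om'_{[\Si],\lam}$ of Riemannian metrics around $h$, which depends only on $[\Si]$ and $\lam$. The main technical hazard is actually quite mild here: one must make sure that the choice $[N]=\mu_{[\Si]}[\Si]$ gives a class that is simultaneously (i) nonzero in $\R$-homology, so Zhang's global extension of the calibration is available, and (ii) a genuine proper multiple of $[\Si]$ with multiplier $\ge 2$, so the constancy-theorem contradiction fires. Both points are secured by the hypotheses $c\ge2$, $M$ orientable, and $[\Si]$ free, which feed directly into Lemma \ref{lemthom} and Definition \ref{defnhom}.
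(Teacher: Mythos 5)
Your proof is correct and takes essentially the same route as the paper: apply Zhang's construction (Lemma \ref{lemzhang}) to a smooth connected representative $N$ of $\mu_{[\Si]}[\Si]$, use the globally extended calibration to get $\ms_h^{\Z}(N)=\no{\mu_{[\Si]}[\Si]}_h^{\R}=\mu_{[\Si]}\nog{[\Si]}_h$, argue via the deformation retract and freeness of $[\Si]$ that any $\Z$-minimizer of $[\Si]$ must have support escaping $U(N)$, then invoke the first bullet of Lemma \ref{lemzhang} and continuity (Fact \ref{fctnr}) for the open set. Your constancy-theorem/pushforward phrasing of the homology obstruction and the extra parameter $\Lambda>\lam$ are harmless cosmetic variants of the paper's terser statement that $H_d(U(N),\Z)=\Z[N]=\Z\mu_{[\Si]}[\Si]$, and Zhang's first bullet already supplies a strict inequality with $\lam$ itself.
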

	We emphasize that by Definition \ref{defnhom}, we have $\mu_{[\Si]}\ge 2.$
	\begin{proof}
		By Fact \ref{fctnr}, it suffices to show the existence of one metric $h$ in which inequality (\ref{inzr}) holds.
		
		Let $N$ be a smoothly embedded connected submanifold representing $\mu_{[\Si]}[\Si].$ Apply Lemma \ref{lemzhang} to $N$ to obtain a metric $h$ on $M,$ tubular neighborhood $U(N)$, and the deformation retract $\pi_N.$
		
		By the last bullet of Lemma \ref{lemzhang} and Fact \ref{fcthl}, we have
		\begin{align}\label{eqzrn}
			\no{\mu_{[\Si]}[\Si]}_h^{\Z}=\no{\mu_{[\Si]}[\Si]}_h^{\R}=\ms_h^{\Z}(N).
		\end{align}
		By the first bullet of Fact \ref{fctnrt}, we have
		\begin{align}\label{eqsmn1}
			\no{[\Si]}_h^{\R}=\frac{1}{\mu_{[\Si]}}\ms_h^{\Z}(N).
		\end{align}
		Let $T\in\mn_h^{\Z}\left([\Si]\right).$ Since $\pi$ is a deformation retract of $U(N)$ onto $N,$ by homotopy invariance of homology \cite{AH}, we deduce that $H_d(U(N),\Z)=\Z[N]=\Z[\mu_{[\Si]}\Si].$ Thus, any integral current whose support is contained in $U(N)$ is homologous to some integer multiple of $\mu_{[\Si]}[\Si].$ Since $[\Si]$ is a non-torsion class, we deduce that
		\begin{fact}
			The support of $T$ cannot be contained in $U(N)$.
		\end{fact}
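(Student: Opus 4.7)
The plan is to argue by contradiction. Suppose $\spt T \s U(N)$. I want to show that $T$ must be $\Z$-homologous in $M$ to an integer multiple of $N$, and then extract a contradiction from the definition of $\mu_{[\Si]}$ together with the hypothesis that $[\Si]$ is a free class.

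Since $\pi_N\colon U(N)\to N$ is a smooth deformation retract provided by Lemma \ref{lemzhang}, and $\supp T$ is a compact subset of $U(N)$, the pushforward $(\pi_N)_\ast T$ is a well-defined $d$-dimensional integral current on $M$ with $\spt (\pi_N)_\ast T\s N$. Because $N$ is a connected, closed, oriented $d$-manifold (orientedness being forced by $N$ representing the integral class $\mu_{[\Si]}[\Si]$), the constancy theorem (\cite[Section 4.9]{FMgmt}, already cited in the excerpt) yields
\begin{align*}
(\pi_N)_\ast T = k[N]
\end{align*}
for some $k\in\Z$. Next, the smooth homotopy $h_t\colon U(N)\to U(N)$ between $\operatorname{id}_{U(N)}$ and $\pi_N$ (viewed as a self-map through $N\s U(N)$) produces, via the standard homotopy/cone construction for integral currents, a $(d+1)$-dimensional integral current $H$ on $M$ with $\pd H = T - (\pi_N)_\ast T$. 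Consequently $T$ and $k[N]$ are homologous in $M$, which gives
\begin{align*}
[\Si] = k\,\mu_{[\Si]}\,[\Si]\quad\text{in } H_d(M,\Z).
\end{align*}

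Since $[\Si]$ is a free class, this equality forces $(1-k\mu_{[\Si]})=0$, i.e.\ $k\mu_{[\Si]}=1$; but $\mu_{[\Si]}\ge 2$ by Definition \ref{defnhom}, a contradiction.

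The one place requiring care is the homotopy-on-currents step, since a priori the track of the homotopy need not sit inside $U(N)$; this is handled by choosing $U(N)$ tubular so that $h_t$ can be taken to stay inside $U(N)$ for all $t$, which is automatic for the linear deformation retract along the normal fibers used by Zhang. If any technicality there is distracting, an equivalent route is to apply the Federer--Fleming isomorphism between integral-current homology and singular homology on the smooth open manifold $U(N)$, combined with $H_d(U(N),\Z)\is H_d(N,\Z)=\Z[N]$, to reach the same conclusion $[T]\in \mu_{[\Si]}\Z\cdot[\Si]$ directly.
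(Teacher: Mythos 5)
Your argument is correct and is essentially the paper's: any integral cycle supported in $U(N)$ is homologous in $M$ to an integer multiple of $[N]=\mu_{[\Si]}[\Si]$, which cannot equal $[\Si]$ because $[\Si]$ is free and $\mu_{[\Si]}\ge 2$. The paper obtains this from homotopy invariance of $H_d(U(N),\Z)=\Z[N]$ (your suggested ``alternative route''), while you make it explicit via pushforward by $\pi_N$, the constancy theorem on $N$, and the homotopy formula for currents.
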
 
		By the first bullet of Lemma \ref{lemzhang}, we deduce that
		\begin{align}\label{insmn}
			\no{[\Si]}_h^{\Z}>\lam\ms_h^{\Z}(N).
		\end{align}
		Combining (\ref{eqsmn1}) and (\ref{insmn}). We are done
	\end{proof}
	\subsection{Proof of Theorem \ref{thmvn}}
	The proof of Theorem \ref{thmvn} is reliant on the proof of Lemma \ref{lemcal}. 
	
	By the last bullet of Lemma \ref{lemzhang}, $N$ is $\Z/n\Z$-area-minimizing in $U(N)$. By the first bullet of Lemma \ref{lemzhang}, we deduce that 
	\begin{align*}
		\no{\mu_{[\Si]}[\Si]}_{g}^{\Z/n\Z}=\ms_h^{\Z}(N).
	\end{align*}
	If $n\ge 2$ and $\operatorname{gcd}\left(n,\mu_{[\Si]}\right)=1,$ then $\mu_{[\Si]}$ is invertible in $\Z/n\Z.$ By Lemma \ref{lemhn}, we have
	\begin{align}\label{insn}
		\frac{2}{n}\no{[\Si]}_{g}^{\Z/n\Z}\le	\no{\mu_{[\Si]}[\Si]}_{g}^{\Z/n\Z}=\ms_h^{\Z}(N).
	\end{align}
	Combining (\ref{insn}) and (\ref{insmn}), we have
	\begin{align*}
		\no{[\Si]}_h^{\Z}>\frac{2\lam}{n}\no{[\Si]}_h^{\Z/n\Z}.
	\end{align*}
	By Fact \ref{fctnr}, we are done by setting $\lam=\frac{n\rh}{2}$ and $\Om'_{[\Si],n,\rh}=\Om'_{[\Si],\lam}$ using the same notations as in Lemma \ref{lemcal}.
		\section{Proof of Theorem \ref{thmkun}}\label{secpfp}	Let us first prove that,
	\begin{lem}\label{lemfaa}
		For any metric $h$ on $M,$ there exists a non-empty open set $\Om_{[\Si],h}\ni h$ in the space of Riemannian metrics on $M$, and an integer $\nu_{[\Si],h}$ such that for any integer $\nu\ge \nu_{[\Si],h}$ and any metric ${\hat{h}}\in \Om_{[\Si],h}$, we have
		\begin{align}
			1\le\frac{\no{\nu[\Si]}^{\Z}_{{\hat{h}}}}{\no{\nu[\Si]}^{\R}_{{\hat{h}}}}\le\frac{3}{2}.
		\end{align} 
	\end{lem}
	\begin{proof}
		By (\ref{eqfed}), there exists an integer $ \nu_{[\Si],h}$ such that for any $\nu\ge \nu_{[\Si],h}$, we have 
		\begin{align}\label{msbdg}
			1\le\frac{\no{\nu[\Si]}^{\Z}_h}{\no{\nu[\Si]}^{\R}_h}\le\frac{9}{8}.
		\end{align} There also exists an open set $\Om_{[\Si],h}\ni h$ in the space of Riemannian metrics on $M$, such that for any metric ${\hat{h}}\ge \Om_{[\Si],h}$, we  have \begin{align}
			\left(\frac{8}{9}\right)^{\frac{2}{d}}{\hat{h}}\le h\le\left(\frac{9}{8}\right)^{\frac{2}{d}}{\hat{h}},
		\end{align} as quadratic forms, which by definition of mass \cite[p.358]{HF}, implies that
		\begin{align}\label{mtbdh}
			\frac{8}{9}\ms_{{\hat{h}}}^{R}\le \ms_h^{R}\le	\frac{9}{8}\ms_{{\hat{h}}}^{R},
		\end{align}for $R=\Z$ or $\R$. 
		Combining (\ref{mtbdh}) and (\ref{msbdg}), by Fact \ref{fctmca}, we have
		\begin{align*}
			\frac{\no{\nu[\Si]}^{\Z}_{{\hat{h}}}}{\no{\nu[\Si]}^{\R}_{{\hat{h}}}}\le \frac{\frac{9}{8}\no{\nu[\Si]}^{\Z}_h}{\frac{8}{9}\no{\nu[\Si]}^{\R}_h}
			\le\left(\frac{9}{8}\right)^{3}.
		\end{align*} Since $\left(\frac{9}{8}\right)^3<\frac{3}{2}$, we are done.
	\end{proof}
	Let us first find the non-empty open set of metrics on $M'$.	Apply Lemma \ref{lemmsbd}. There exists a non-empty open set $\Om_{[\Si],g',\ka}'\ni g'$ in the space of Riemannian metrics on $M'$, such that for any $h'\in \Om_{[\Si],g',\ka}'$, and any non-torsion class $[\si']$ of any dimension $0\le m\le d'+c'$ on $M',$ we have,
	\begin{align*}
		\no{[\si']}_{h'}^{\R}\le \no{[\si']}_{h'}^{\Z}\le v({h'})	\no{[\si']}_{h'}^{\R},	
	\end{align*}for some real number $v({h'})>0$ depending continuously on $h'$.	
	
	By shrinking $\Om'_{[\Si],g',\ka}$ if necessary, we can assume that 
	\begin{align*}
		\sup_{h'\in \Om'_{[\Si],g',\ka}}v(h')=v_{g'}<\infty,
	\end{align*}
	and thus for all $h'\in\Om'_{[\Si],g',\ka},$ we have
		\begin{align}\label{eqrv}
		\no{[\si']}_{h'}^{\R}\le \no{[\si']}_{h'}^{\Z}\le v_{g
		}\no{[\si']}_{h'}^{\R},	
	\end{align}
	
	Next let us find the non-empty open set of metrics on $M$.	Apply Lemma \ref{lemcal}. For any $\lam>0$, there exists a non-empty open set $\Om_{[\Si],\lam}$ in the space of Riemannian metrics on $M$ such that for any metric $\check{h}$ in $ \Om_{[\Si],\lam}$, we have $			\no{[\Si]}_{\check{h}}^{\Z}>\lam\mu_{[\Si]}\nog{[\Si]}_{\check{h}}.$ 
	
	Fix one metric $\check{h}$ in $\Om_{[\Si],\lam}$. Apply Lemma \ref{lemfaa} to obtain a non-empty open subset $\Om_{[\Si],\lam,\check{h}}\ni \check{h}$ contained in $ \Om_{[\Si],\lam}$, and an integer $\nu_{[\Si],\check{h}}$, such that for any metric $h\in\Om_{[\Si],\lam,\check{h}}$, and any integer $\nu\ge \nu_{[\Si],\check{h}}$, we always have,
	\begin{align}\label{eqn3}
		\no{\nu[\Si]}_h^{\R}\le \no{\nu[\Si]}^{\Z}_h\le \frac{3}{2}\no{\nu[\Si]}_h^{\R},
	\end{align}
	and 
	\begin{align}\label{eqlm}
		\no{[\Si]}^{\Z}_h> \lam\mu_{[\Si]}\no{[\Si]}_h^{\R}.
	\end{align}
	
	For any integer $\nu\ge \nu_{[\Si],\check{h}}$, if $[\si']=\nu[\si'']\in \nu H_m(M',\Z)$ is a non-torsion class. 
	In these cases, using (\ref{ineqprod}), (\ref{eqrv}), and (\ref{eqn3}), we have
	\begin{align}\label{eqf1}
		&\no{[\Si]\times [\si']}^{\Z}_h=\no{[\Si]\times \nu[\si'']}_h^{\Z}\\=&\no{\nu[\Si]\times [\si'']}_h^{\Z}\\\le&\no{\nu[\Si]}_h^{\Z}\times \no{ [\si'']}_h^{\Z}\\\le&\frac{3}{2}\no{\nu[\Si]}_h^{\R}\times v_{g'}\no{ [\si'']}_h^{\R}\\
		=&\frac{3}{2}v_{g'}\nu \no{[\Si]}_h^{\R}\no{ [\si'']}_h^{\R}.
	\end{align}
	On the other hand, using (\ref{eqlm}), we have
	\begin{align}\label{eqf2}
		&	\no{[\Si]}^{\Z}_h\times \no{[\si']}^{\Z}_h\\>& \lam\mu_{[\Si]}\no{[\Si]}_h^{\R}\times \no{[\si']}^{\R}_h\\
		=&	\lam\mu_{[\Si]}\nu\no{[\Si]}_h^{\R} \no{[\si'']}^{\R}_h.
	\end{align}
	Combining the series of inequalities starting from (\ref{eqf1}) and (\ref{eqf2}), we have
	\begin{align*}
		\no{[\Si]}^{\Z}_h\times \no{[\si']}^{\Z}_h>	\lam\frac{2\mu_{[\Si]}}{3v_{g'}}\no{[\Si]\times [\si']}^{\Z}_h.
	\end{align*}	Recall from Definition \ref{defnhom} that $\mu_{[\Si]}$ is a topological invariant independent of metrics, while $v_{g'}$ defined in (\ref{eqrv}) depends only on $g'.$ Thus by setting $\lam=\frac{3\nu_{g'}}{2\mu_{[\Si]}}\ka$ and $\Omega_{[\Si],g',\ka}=\Omega_{[\Si],\lam}$,	we are done.
\section{Remarks about the main theorems}\label{secrem}
	\subsection{Other geometric variational problems}
	Many geometric variational problems can also be formulated on chain spaces over different coefficients, e.g., Almgren-Pitts min-max \cite{JPmm,MNtsc}, mean curvature flow \cite{CMm1,BWcfm2}, Song's spherical Plateau problem \cite{ASsp}, minimizers of elliptic \cite{SSA} and other general functionals, etc. Motivated by these instances, we propose the following analogue of the Hasse principle:
	\begin{conj}\label{conj}(The Hasse principle for variational problems)
		For a variational problem that can be formulated on chain spaces over $\R,\Z$ and $\Z/n\Z$ for $n\ge 2,$ the solutions in $\Z$-chains can be reconstructed from the corresponding solutions in $\R$-chains and $\Z/n\Z$-chains for all $n \geq 2$.
	\end{conj}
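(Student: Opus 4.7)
The plan is to abstract the strategy of Theorem \ref{thmv} into a short list of structural ingredients that the variational problem must satisfy, and then verify those ingredients case by case. Unwinding the proof, Theorem \ref{thmv} decomposes into four essentially formal steps: (i) a metric-uniform density upper bound for $\Z/n\Z$-critical points representing $[\Si]^{\Z/n\Z}$ that is independent of $n$, as in Lemma \ref{lemdst}; (ii) a regularity dichotomy forcing the codimension-one Almgren stratum of any $\Z/n\Z$-critical point to carry density at least $n/2$, hence to be empty once $n$ exceeds the density bound from (i), as in Lemma \ref{lemsingt}; (iii) a Federer-type ``representative modulo $n$'' construction producing a $\Z$-chain $S$ with $S^{\Z/n\Z}=T$ and $\ms^{\Z}_g(S)=\ms^{\Z/n\Z}_g(T)$, after which (ii) upgrades $S$ to an integral cycle; and (iv) a norm comparison between integral and real norms, as in Lemma \ref{lemmsbd}, pinning down the integral homology class of $S$ provided $\tau_d\mid n$ and $n$ is large.

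For a general geometric variational problem with energy $\mathbf{E}$ defined on chains over $\R,\Z,\Z/n\Z$, I would first isolate analogues of these four ingredients. Ingredient (i) typically follows from a Huisken--Almgren-type monotonicity identity, available once $\mathbf{E}$ admits a scale-invariant first variation and its critical points have upper semi-continuous density. Ingredient (ii) requires an Almgren-style stratification of $\mathbf{E}$-critical $\Z/n\Z$-chains together with a classification of one-dimensional tangent cones; for the area functional this reduces to the observation that any $\Z/n\Z$-length-minimizing cone is a bouquet of $kn$ rays. Ingredient (iii) should follow from a rectifiability theorem for $\mathbf{E}$-critical $\Z/n\Z$-chains with multiplicity in a symmetric residue system together with a combinatorial density-lift argument paralleling Fact \ref{fctrmn}. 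Ingredient (iv) is formal once $\mathbf{E}$ depends continuously on the ambient geometry, since the target group remains $H_d(M,\R)$ and Lemma \ref{lemnorm} applies verbatim.

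The main obstacle I foresee is ingredient (ii). For area-minimizers it rests on the sharp result \cite{DHMS} ruling out codimension-one singularities together with the explicit classification of one-dimensional $\Z/n\Z$-minimizing cones. Analogues of both are absent outside the area setting: tangent-flow classifications for a $\Z/n\Z$-coefficient mean curvature flow are not established, and the $\Z/n\Z$-coefficient formulation of Almgren--Pitts min-max is not even written down. A secondary obstacle is that uniqueness of the Federer lift uses the constancy theorem on the regular set, which needs pointwise regularity of $\mathbf{E}$-critical chains at codimension zero to guarantee locally constant multiplicity; without it one can only obtain \emph{some} integral solution that $\Z/n\Z$-reduces to $T$, rather than a canonical one.

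Accordingly, my proposal is not to attempt Conjecture \ref{conj} in full generality at once, but to establish it case by case, beginning with uniformly elliptic integrands in the sense of \cite{SSA}, where codimension-one regularity is closest to the area case, then stable Almgren--Pitts min-max, and finally mean curvature flow via a parabolic rescaling of the elliptic argument. In each case the decisive technical input will be a $\Z/n\Z$-coefficient codimension-one regularity theorem, which I expect to require genuinely new ideas in the parabolic and min-max settings.
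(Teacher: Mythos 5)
The statement you were asked to prove is Conjecture~\ref{conj}, which the paper explicitly leaves open: there is no ``paper's own proof'' to compare against, only a proof of the special case for area-minimizing submanifolds in homology (Theorem~\ref{thmv}). You correctly recognize this and respond with a research program rather than a purported proof, which is the appropriate posture. Your abstraction of the Theorem~\ref{thmv} argument into four ingredients is accurate: a metric-uniform, $n$-independent density bound for $\Z/n\Z$-minimizers (Lemma~\ref{lemdst}); a regularity dichotomy forcing the codimension-one Almgren stratum to carry density $\ge n/2$ and hence to be empty once $n$ exceeds the bound (Lemma~\ref{lemsingt}); Federer's representative modulo $n$ with mass preservation, upgraded to a cycle by (i)--(ii) (Fact~\ref{fctrmn} and Subsection~\ref{subsectionfed}); and a real-homology norm comparison to pin down the integral class. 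Note, though, that the comparison used in Subsection~\ref{subsectiona} is Lemma~\ref{lemnorm}, not Lemma~\ref{lemmsbd}, which only appears in a later corollary.

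Your identification of ingredient (ii) as the main bottleneck is right: the $\Z/n\Z$-coefficient regularity inputs (the emptiness of the top singular stratum and the one-dimensional cone classification) are specific to the area functional and genuinely unavailable for mean curvature flow, min-max, or the spherical Plateau problem. One place you underestimate the difficulty is ingredient (iv). You describe it as ``formal,'' but the norm-comparison step uses in an essential way that the problem is indexed by homology classes of a fixed compact manifold, that $H_d(M,\Z)$ is a finitely generated abelian group with a preferred basis (Assumption~\ref{decompbs}), and that $\no{\cdot}_g^{\R}$ governs the asymptotics of $\no{\cdot}_g^{\Z}$ in the sense of (\ref{eqfed}). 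Almgren--Pitts min-max and mean curvature flow do not produce a family of ``solutions'' parametrized by an abelian group in this way, so there is no analogue of the class-pinning step until one first finds the right combinatorial structure on the solution set. This is as much a formulational gap as a regularity gap, and is part of why the conjecture is stated only heuristically in the paper.
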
 
	For instance, an obvious direction is that of classical Plateau's problems on Euclidean spaces. 
	
	It is worth emphasizing that in number theory, a great deal of research revolves around obstructions to the Hasse principle. For example, the Tate–Shafarevich group is heuristically interpreted as the obstruction to the Hasse principle \cite[Sections 7, 8]{BMlgn}, and it plays a central role in many of the field’s deepest conjectures, such as the Birch and Swinnerton–Dyer conjecture. This naturally leads us to ask:
	\begin{conj}
		What are the obstructions to the Hasse principle for variational problems?
	\end{conj}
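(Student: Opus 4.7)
The plan is to mirror the arithmetic analogue. In number theory, obstructions to the Hasse principle are organized by cohomological data (Brauer--Manin, Tate--Shafarevich) classifying adelic solutions that fail to descend to a rational one. I propose encoding the reduction maps $\mn_g^R$ as a diagram of moduli spaces indexed by coefficient rings $R$, and measuring obstructions by a suitable derived invariant of this diagram. The proof of Theorem \ref{thmv} supplies the template: it works precisely because the monotonicity formula (Lemma \ref{lemmon}) forces density bounds, which in turn force Federer's mod $n$ representative to be an integral cycle (Fact \ref{fctfrmn}), after which the universal coefficient theorem pins down the integral homology class once $\tau_d \mid n$ (Lemma \ref{lemnt}). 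Obstructions should appear precisely when one of these two mechanisms breaks down, and the examples behind Theorem \ref{thmvn} should model the simplest nontrivial classes.

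Concretely, I would proceed in four steps. First, formalize the Hasse principle categorically: for each variational problem $\mathcal{E}$ definable on $R$-chains, define an obstruction space $\operatorname{Sha}(\mathcal{E},[\Si])$ as the cokernel, in an appropriate enriched sense, of the reduction map from $\mathcal{E}$-critical integral cycles in $[\Si]$ to the inverse limit, over all admissible $R$, of $\mathcal{E}$-critical $R$-cycles in $[\Si]^R$. Second, isolate \emph{local} obstructions, analogous to local solvability in a single completion: these arise when an $R$-critical cycle exhibits singularity types, tangent cone densities, or topological features (branching, non-orientability, $Y$-type junctions) that no $\Z$-critical cycle can carry. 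Third, isolate \emph{global} obstructions, analogous to Brauer--Manin: situations where every finite family of $R$-critical cycles admits compatible lifts, yet no single integral lift realizes them simultaneously. Fourth, test on benchmark cases: classical Plateau problems in Euclidean space, Almgren--Pitts widths with varying coefficient groups, branched holomorphic chains, Song's spherical Plateau problem over different fundamental groups, and elliptic integrands where the analogue of Lemma \ref{lemzhang} fails.

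The principal obstacle is that most geometric variational problems lack the clean functorial structure that area-minimization enjoys. For min-max widths the critical values depend on coefficients through Lusternik--Schnirelmann-type invariants rather than through chain-level reductions; for mean curvature flow, multiplicity behavior at singular times obstructs any naive reduction morphism; for elliptic integrands, even existence theory across all coefficients is not uniformly available. A natural first benchmark is therefore to establish or refute Conjecture \ref{conj} for classical Plateau problems, where all reduction maps are well defined and the only possible obstructions come from the structure of singular sets. Even in this restricted setting, defining $\operatorname{Sha}(\mathcal{E},[\Si])$ so that it vanishes in the regime where Theorem \ref{thmv} applies while remaining nontrivial in the regime witnessed by Theorem \ref{thmvn} will force the theory to be simultaneously sensitive to density bounds and to the arithmetic relationship between $\tau_d$ and $\mu_{[\Si]}$. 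Finding the right cohomological receptacle that records both pieces of data, and that specializes back to the Tate--Shafarevich group under a suitable dictionary, is in my view the central difficulty.
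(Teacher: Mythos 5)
The statement you are addressing is not a theorem: it is an open question posed in the discussion section (``What are the obstructions to the Hasse principle for variational problems?''), and the paper offers no proof of it, nor could it --- the question is deliberately left open as an analogue of the role the Tate--Shafarevich group plays in number theory. Consequently there is nothing in the paper to compare your argument against, and your text is not a proof of anything: it is a research programme. That is not a criticism of its content, but it means the central deliverable --- an actual definition of an obstruction invariant together with a vanishing theorem in the regime of Theorem \ref{thmv} and a non-vanishing computation in the regime of Theorem \ref{thmvn} --- is entirely absent. Your ``$\operatorname{Sha}(\mathcal{E},[\Si])$ as a cokernel in an appropriate enriched sense'' is a placeholder: the relevant maps go between sets of minimizers (which carry no group structure), the inverse limit over coefficient rings you invoke is not compatible with the norms $\no{\cdot}^{\Z/n\Z}$ of Definition \ref{defnnorm} (the paper itself points this out in its discussion of Ostrowski's theorem, where it explains why $\Z_p$ cannot be used naively), and no candidate receptacle is actually constructed.

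Two concrete points where your sketch conflicts with the paper's own results. First, you say obstructions should appear ``precisely when one of these two mechanisms breaks down'' (density bounds and the $\tau_d\mid n$ divisibility); but Theorem \ref{thmvn} exhibits a failure of the naive local-to-global equality $\no{[\Si]}^{\Z}_h=\no{[\Si]}^{\Z/n\Z}_h$ for metrics where both mechanisms are intact for other values of $n$ --- the obstruction there is governed by the representable multiplicity $\mu_{[\Si]}$ and the condition $\gcd(n,\mu_{[\Si]})=1$, an invariant your framework does not mention. Any proposed obstruction theory must see $\mu_{[\Si]}$, and also the failure of homogeneity of $\no{\cdot}^{\Z}_g$ discussed in Section \ref{secdrh}, since that is exactly why real homology is indispensable. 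Second, your claim that for classical Plateau problems ``the only possible obstructions come from the structure of singular sets'' is unsupported: the counterexamples of Theorem \ref{thmvn} are built from Zhang's metrics around smooth embedded representatives of $\mu_{[\Si]}[\Si]$ (Lemma \ref{lemzhang}), so the obstruction is global-topological, not a singularity phenomenon. If you want to turn this into mathematics, the honest first step is to fix one benchmark (say the classical Plateau problem), write down the reduction maps explicitly, and compute what is lost --- not to posit a derived invariant whose existence is the whole difficulty.
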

	\subsection{Why real homology is needed in the compact setting?}\label{secdrh}
	In the statement of Theorem \ref{thmv}, there is no appearance of $H_d(M,\R).$ The reader might wonder why we still say the theorem manifests the Hasse principle, which needs both real solutions and solutions modulo prime powers. 
	
	Information from $H_d(M,\R)$ in Section \ref{subsectiona} is used to give lower bounds on the growth of $\no{\cdot}_g^{\Z}$ on $H_d(M,\Z).$
	Heuristically speaking, the Banach space $H_d(M,\R)$ with norm $\no{\cdot}_g^{\R}$ represents the asymptotic behavior of $H_d(M,\Z)$ with norm $\no{\cdot}_g^{\Z}$ at infinity. 
	
	Recall Fact (\ref{eqfed}):
	\begin{align*}
		\lim_{\substack{k\to\infty\\ k\in\N}}\frac{\no{k[\Si]}_g^{\Z}}{k}=\no{[\Si]}_g^{\R}.
	\end{align*}In sharp contrast, by (\ref{eqzrn}) and (\ref{insmn}), we have constructed metrics $h$ with $\no{[\Si]}_h^{\Z}>\lam\no{\mu_{[\Si]}[\Si]}_h^{\Z}$ for any $\lam>1.$ The only condition we used about $\mu_{[\Si]}$ is that $\mu_{[\Si]}[\Si]$ admits a smooth representative. By using Lemma \ref{lemthom}, our proof of Lemma \ref{lemcal} can be modified to show that
	\begin{cor}\label{corbad}
		If $d\ge 1,c\ge 2,$ $[\Si]$ is a non-torsion class and $M$ is an orientable, and $K$ is the infinite subset of $\Z_{\ge 2}$ defined in Lemma \ref{lemthom}, then for any two integer $k,l\in K$ with $k<l$ and any real numbers $\lam>1$,  there exist a non-empty open set $\Om'_{[\Si],k,l,\lam}$ in the space of Riemannian metrics, depending on $[\Si]$, $k,$ $l$, and $\lam,$ such that for any metric $h\in \Om'_{[\Si],k,l,\lam},$ we have
		\begin{align}
			\frac{
				\no{k[\Si]}_h^{\Z}}{k}>\lam\frac{\no{l[\Si]}_h^{\Z}}{l}.
		\end{align} 
	\end{cor}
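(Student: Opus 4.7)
The plan is to adapt the proof of Lemma \ref{lemcal}, now applying Zhang's construction to a smoothly embedded representative of $l[\Si]$ supplied by Lemma \ref{lemthom}. Since $l\in K$, there exists a smoothly embedded connected compact submanifold $N_l\s M$ representing $l[\Si]$. I would apply Lemma \ref{lemzhang} to $N_l$ with a free parameter $\lam'>1$ to be fixed below, producing a metric $h$ on $M$, a tubular neighborhood $U(N_l)$, and a deformation retract $\pi_{N_l}\colon U(N_l)\to N_l$. By the second through fourth bullets of Lemma \ref{lemzhang} combined with Fact \ref{fcthl}, the submanifold $N_l$ is $\Z$-area-minimizing in $l[\Si]$, yielding $\no{l[\Si]}_h^{\Z}=\ms_h^{\Z}(N_l)$.

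Next, pick any $T\in\mn_h^{\Z}(k[\Si])$. The key claim is that $\spt T$ cannot be contained in $U(N_l)$: by homotopy invariance of homology, $H_d(U(N_l),\Z)=\Z[N_l]=\Z\cdot l[\Si]$, so if $\spt T$ lay in $U(N_l)$ then $k[\Si]=m\cdot l[\Si]$ in $H_d(M,\Z)$ for some $m\in\Z$. Because $[\Si]$ is a free class, this forces $k=ml$, impossible since $0<k<l$. Hence by the first bullet of Lemma \ref{lemzhang},
\begin{align*}
    \no{k[\Si]}_h^{\Z}=\ms_h^{\Z}(T)>\lam'\ms_h^{\Z}(N_l)=\lam'\no{l[\Si]}_h^{\Z}.
\end{align*}

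Dividing by $k$ and rewriting the right-hand side as $\frac{\lam' l}{k}\cdot\frac{\no{l[\Si]}_h^{\Z}}{l}$, the desired strict inequality at the metric $h$ follows provided $\lam'>k\lam/l$, which is compatible with the constraint $\lam'>1$ from Lemma \ref{lemzhang}; for instance any $\lam'>\max(1,k\lam/l)$ works. Finally, by the continuity of the mass norm in the metric (Fact \ref{fctnr}), the strict inequality persists on an open neighborhood of $h$, which I would declare to be $\Om'_{[\Si],k,l,\lam}$. The main obstacle is the homological step ruling out $\spt T\subset U(N_l)$: it relies essentially on $[\Si]$ being free, so that the equation $k[\Si]=ml[\Si]$ admits no integer solution when $0<k<l$. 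This is exactly why the hypothesis that $[\Si]$ is a free class is indispensable; for torsion classes the analogous equation could hold and the argument would collapse.
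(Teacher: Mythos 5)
Your proposal is correct and is precisely the modification of Lemma \ref{lemcal} that the paper indicates (the paper gives no explicit proof, only the remark that the proof of Lemma \ref{lemcal} can be adapted via Lemma \ref{lemthom}): apply Zhang's construction to a smooth representative $N_l$ of $l[\Si]$, use the fourth bullet plus Fact \ref{fcthl} to identify $\no{l[\Si]}_h^{\Z}=\ms_h^{\Z}(N_l)$, rule out $\spt T\subset U(N_l)$ for $T\in\mn_h^{\Z}(k[\Si])$ by noting $k[\Si]\notin\Z\cdot l[\Si]$ when $[\Si]$ is free and $0<k<l$, invoke the first bullet, tune $\lam'>\max(1,k\lam/l)$, and open up the inequality via Fact \ref{fctnr}.
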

	In other words, Federer's asymptotic (\ref{eqfed}) is sharp and cannot be improved to a finitary estimate on the growth of $\no{k[\Si]}_h^{\Z}$ for all $k\in\N$. An alternative way to say this is that the homogeneity of the norm $\no{\cdot}_h^{\Z}$ fails tremendously. One should also compare with the results in \cite{FMmc,RY,BWmc}, where the authors are considering the failure of homogeneity of $\no{\cdot}_h^{\Z}$ of Plateau problems in Euclidean space. 
	
	In summary, the failure of homogeneity of $\no{\cdot}_{g}^{\Z}$ means we cannot control the growth of $\no{\cdot}_{g}^{\Z}$ on $H_d(M,\Z)$ without information from $H_d(M,\R)$. 
	\subsection{Why real homology can be circumvented in the Euclidean setting?}
	In Section \ref{secd=1}, we provide an alternative proof of Theorem \ref{thmcm} that did not use infomation from real homology groups $H_d(\R^{d+c},W,\R)$.
	
The author's interpretation is that analogous ill-behaved phenomenons like Corollary \ref{corbad}	cannot happen in the setting of relative homology classes  $[\Si]\in H_d(\R^{d+c},W,\R).$ See related conjectures in \cite{RY}. \subsection{More on when $\tau_d$ does not divide $n$}
	The reader might wonder what more we can say beyond Theorem \ref{thmvg} when $\tau_d\not|n$. Recall Fact \ref{fctfrmn}. Without the condition of $\tau_d|n$, we can no longer make sure that Federer's ``representative" modulo $n$ of elements in $\mn_g^{\Z/n\Z}\left([\Si]\right)$ still lies in $[\Si]$ in integral homology, thus resulting in the weaker Theorem \ref{thmvg}. Indeed, we can construct examples to show that this is not true when $\tau_d>1$ and $d<c$ using a stronger version of Lemma \ref{lemzhang}. However, for general $n$ we can instead focus on the $\Z/n\Z$ homology and say something more than Corollary \ref{coras}.
	\begin{cor}We have
		\begin{align}\label{eqliminf}
			\liminf_{\substack{n\to\infty\\ n\in\N}}\frac{\#\{\om|\textnormal{all $\Z/n\Z$-area-minimizing representatives of }\om\in  H_d(M,\Z/n\Z)\textnormal{ are $\Z$-cycles}\}}{\#\{\om|\om\in H_d(M,\Z/n\Z)\textnormal{ admitting $\Z$-representatives}\}}>0.
		\end{align}
	\end{cor}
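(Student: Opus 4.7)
The plan is to decouple the regularity step of Section \ref{subsectionfed} from the divisibility hypothesis $\tau_d\mid n$ (which is used only in Section \ref{subsectiona} to identify the integral homology class of $S$) and combine what remains with a lattice point count on $H_d(M,\Z)\otimes\Z/n\Z$. First I would establish the following density criterion: if $\om\in H_d(M,\Z/n\Z)$ satisfies $\no{\om}_g^{\Z/n\Z}<n/(2C_g)$, with $C_g$ the constant from Lemma \ref{lemmon}, then every $T\in\mn_g^{\Z/n\Z}(\om)$ admits only Federer "representatives" modulo $n$ that are $\Z$-cycles. Indeed, the monotonicity formula of Lemma \ref{lemmon}, applied to the stationary varifold underlying $T$ with total mass $\no{\om}_g^{\Z/n\Z}$, forces $\ta_T(p,g)<n/2$ everywhere. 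The stratification reasoning of Lemma \ref{lemsingt} then shows $\sing T$ has Hausdorff dimension at most $d-2$: the $d$-th Almgren stratum is empty by \cite[Theorem 1.6]{DHMS}, and no point can lie in the $(d-1)$-th stratum, since its tangent cone would split off a one-dimensional $\Z/n\Z$-length-minimizing cone requiring $\ta_T\ge n/2$. Lemma \ref{lemdv} then places $\spt\pd S$ inside $\sing T$, and the same flat chain support argument used at the end of Section \ref{subsectionfed} yields $\pd S=0$.

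Second, I would carry out the lattice point count. The denominator equals $|\iota(H_d(M,\Z)\otimes\Z/n\Z)|=n^{b_d}\prod_{i\in I}\gcd(p_i^{\nu_i},n)$ by (\ref{equct}) and injectivity of $\iota$. For the numerator, using Assumption \ref{decompbs}, each such $\om$ admits a canonical lift $[\Si]=\sum_{j=1}^{b_d}\ai_jf_j+\sum_{i\in I}\be_iv_i$ with $\ai_j\in(-n/2,n/2]\cap\Z$ and $\be_i\in[0,\gcd(p_i^{\nu_i},n))\cap\Z$. By Fact \ref{fctncmp}, Fact \ref{factnorm}, and (\ref{inns}),
\begin{align*}
\no{\om}_g^{\Z/n\Z}\le\no{[\Si]}_g^\Z\le\sum_{j=1}^{b_d}|\ai_j|\no{f_j}_g^\Z+B_1,
\end{align*}
where $B_1:=\sum_{i\in I}p_i^{\nu_i}\no{v_i}_g^\Z$ depends only on $M$ and $g$. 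Fixing $\de>0$ with $\de b_d\max_j\no{f_j}_g^\Z<1/(4C_g)$ and $n$ large enough that $B_1<n/(4C_g)$, every $\om$ whose lift satisfies $|\ai_j|\le\de n$ for all $j$ then lies in the good set. The number of such lifts is at least $(2\lfloor\de n\rfloor+1)^{b_d}\prod_i\gcd(p_i^{\nu_i},n)$, so the ratio in (\ref{eqliminf}) is bounded below by $(2\de)^{b_d}+o(1)$, which is positive.

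The main obstacle is confirming that the key regularity conclusion $\pd S=0$ really rests on nothing more than the pointwise density bound $\ta_T<n/2$, with no hidden reliance on $\tau_d\mid n$. A careful reread of Lemmas \ref{lemdv} and \ref{lemsingt} confirms that these arguments are purely local and density-driven. Fact \ref{fctfce} (e.g., $\mathbb{RP}^2\subset S^4$ modulo $2$) simultaneously explains why the count must be restricted to classes admitting $\Z$-representatives: unorientable mod-$n$ cycles need not lift to $\Z$-cycles at all and would only inflate the denominator. Once the decoupling is made explicit, the remainder is a brief comparison of lattice cubes.
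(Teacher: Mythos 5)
Your proof is correct and follows essentially the same two-stage architecture as the paper's argument: first observe that the regularity analysis of Section \ref{subsectionfed} (density bound from Lemma \ref{lemmon}, stratification via Lemma \ref{lemsingt}, then Lemma \ref{lemdv} and the flat-support argument) only requires the pointwise density bound $\ta_T<n/2$ and is independent of the hypothesis $\tau_d\mid n$; second, convert the threshold $C_g\no{\om}_g^{\Z/n\Z}<n/2$ into a count of lattice points in $H_d(M,\Z)\otimes\Z/n\Z$ and compare against the total count $n^{b_d}\prod_{i\in I}\gcd(p_i^{\nu_i},n)$.

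The one genuine difference is in how you bound $\no{[\Si]}_g^{\Z}$ in terms of the coefficients $|\ai_j|$. The paper routes through the $\R$-norm, invoking Lemma \ref{lemmsbd} (the uniform comparison $\no{\cdot}_g^\Z\le\upsilon(g)\no{\cdot}_g^\R$ on free classes) and Fact \ref{fctfc} (torsion classes have vanishing $\R$-norm), obtaining $\no{[\Si]}_g^\Z\le\upsilon(g)b_d\max_j|\ai_j|\max_j\nog{f_j}_g$. You instead bound directly via the triangle inequality and (\ref{inns}), and absorb the torsion contribution into a fixed constant $B_1=\sum_{i\in I}p_i^{\nu_i}\no{v_i}_g^\Z$. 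Your version is slightly more elementary, avoids Lemma \ref{lemmsbd} entirely, and has a small technical advantage: Lemma \ref{lemmsbd} as stated only applies to free classes (for a torsion class $\nog{[\Si]}_g=0$ but $\no{[\Si]}_g^\Z>0$), so the paper's chain of inequalities is really only valid when some $a_j\ne0$; your additive $B_1$ handles the torsion contribution uniformly. Both yield the same conclusion since the asymptotic count is dominated by the free coordinates, but your bookkeeping is cleaner and your final ratio bound $(2\de)^{b_d}+o(1)$ is more explicit than the paper's. No gaps.
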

	\begin{proof}
		The denominator of (\ref{eqliminf}) equals 
		\begin{align}\label{eqden}
			\# \left(H_d(M,\Z)\right)^{\Z/n\Z}=n^{b_d}\prod_{i\in I}\operatorname{gcd}(p_i^{\nu_i},n).
		\end{align}
		Using the notation of Lemma \ref{lemmon}, Subsection \ref{subsectionfed} and Fact \ref{fctsm} show that whenever
		\begin{align*}
			C_g\no{[\Si]}_g^{\Z/n\Z}<\frac{n}{2},
		\end{align*}
		we have
		\begin{align}\label{eqmnc}
			\mn_g^{\Z/n\Z}\left([\Si]\right)\s\bigcup_{[\Pi]\in H_d(M,\Z)}\left(\mn_{g}^{\Z}\left([\Pi]\right)\right)^{\Z/n\Z}.
		\end{align}Expand $[\Si]$ with respect to the basis in Assumption \ref{decompbs}, we have
		\begin{align}
			[\Si]=\sum_{j=1}^{b_d}a_jf_j+\sum_{i\in I}b_iv_i,
		\end{align}for $\bigcup_{j=1}^{b_d}\{a_j\}\s \N,$ and $\bigcup_{i\in I}\{b_i\}\s \Z\cap (-\frac{n}{2},\frac{n}{2}].$ Apply Lemma \ref{lemmsbd} and Fact \ref{fctfc}, we obtain
		\begin{align*}
			\no{[\Si]}_g^{\Z}\le \upsilon(g)\nog{[\Si]}_g\le\upsilon(g)b_d\max_{1\le j\le b_d}|a_j|\max_{1\le j\le b_d}\nog{f_j}_g.
		\end{align*}
		This implies whenever
		\begin{align*}
			\max_{1\le j\le b_d}|a_j|<\frac{n}{2C_g\upsilon(g)b_d\max_{1\le j\le b_d}\nog{f_j}_g},
		\end{align*}
		equation (\ref{eqmnc}) holds. This implies that the numerator of (\ref{eqliminf}) is at least
		\begin{align}\label{innnom}
			\left(\frac{n}{2C_g\upsilon(g)b_d\max_{1\le j\le b_d}\nog{f_j}_g}\right)^{b_d}.
		\end{align}
		Combining (\ref{eqden}) and (\ref{innnom}), we are done.
	\end{proof}
	\begin{conj}
		We conjecture that the value of the left-hand side of (\ref{eqliminf}) is $1.$
	\end{conj}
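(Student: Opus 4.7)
The plan is to strengthen the counting argument behind the corollary immediately preceding the conjecture. For $\tau_d\mid n$ the denominator in (\ref{eqliminf}) equals $n^{b_d}\tau_d$, while that argument exhibits only $\Theta(n^{b_d})$ good $\om$'s. To push the $\liminf$ all the way to $1$ it suffices to show that the number of \emph{bad} $\om\in H_d(M,\Z)\otimes\Z/n\Z$, i.e., those admitting some $T\in\mn_g^{\Z/n\Z}(\om)$ whose Federer ``representative'' is not an integral cycle, is $o(n^{b_d}\tau_d)$ as $n\to\infty$ with $\tau_d\mid n$.

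First I would refine the criterion used in Subsection \ref{subsectionfed}. By Lemma \ref{lemdv} and Lemma \ref{lemsingt}, $\om$ is bad if and only if some $T\in\mn_g^{\Z/n\Z}(\om)$ has a point $p$ with $\ta_T(p,g)=\frac{n}{2}$. By Lemma \ref{lemmon} each such point forces $V(B_{r_g}(p,g))\ge\frac{n}{2}r_g^d\exp(-c_gr_g)$. Combined with the global estimate $\no{\om}_g^{\Z/n\Z}\le\upsilon(g)\nog{[\Si]_\om}_g=O(n)$ obtained by applying Fact \ref{fctncmp} and Lemma \ref{lemmsbd} to the canonical lift $[\Si]_\om=\sum a_jf_j+\sum b_iv_i$ with $a_j\in(-\frac{n}{2},\frac{n}{2}]$, a ball-packing argument shows that the set $P_T$ of density-$\frac{n}{2}$ points of any bad $T$ has uniformly bounded cardinality independent of $n$.

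Second I would attempt a transversality-cum-rigidity argument to bound the bad set. At every $p\in P_T$ the tangent cone of $T$ must be, by the stratum analysis in the proof of Lemma \ref{lemsingt}, an $\R^{d-1}$-invariant cone assembled out of $\frac{n}{2}$ half-planes meeting along a $(d-1)$-axis; such configurations are geometrically rigid. Using Zhang-type local calibrations as in Lemma \ref{lemzhang}, plus the calibration-based stability of $\Z$-area-minimizers from Subsection \ref{ssreg}, one hopes to show that a generic perturbation of $\om$ within $H_d(M,\Z)\otimes\Z/n\Z$ by any class with sufficiently small lift coefficients strictly lowers the $\Z/n\Z$-area near $p$, hence destroys the density-$\frac{n}{2}$ singularity by producing a cheaper competitor. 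If so, the bad set would lie inside a union of affine slabs of thickness $o(n)$ in lift-coefficient space, giving cardinality $o(n^{b_d}\tau_d)$.

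The hard part will be quantifying the perturbation step. Turning the heuristic ``generic perturbation kills density-$\frac{n}{2}$ singularities'' into a uniform-in-$n$ thickness bound for the bad slabs is, to my knowledge, beyond the reach of the regularity theory cited in the introduction, which treats each fixed $n$ with no uniform control. A possible workaround is a local Almgren-style comparison: replace the putative density-$\frac{n}{2}$ tangent cone by a smaller-mass integral competitor built from Federer's ``representative'' modulo $n$, and translate the resulting strict mass inequality into a codimension-$1$ linear constraint on the lift coefficients of $\om$. Making such a constraint effective and uniform in $n$ is the genuine obstacle and likely requires new quantitative rigidity for $\Z/n\Z$-area-minimizers.
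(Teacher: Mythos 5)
This statement is a \emph{conjecture} in the paper, not a theorem; the author explicitly poses it as an open problem immediately after proving the weaker positivity of the liminf, so there is no ``paper's own proof'' to compare against. You yourself flag this in your last paragraph, and your self-assessment is correct: what you have written is an attack plan with a genuine, unclosed gap, not a proof.

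Beyond the gap you acknowledge, there are a few technical points worth flagging. First, the denominator in (\ref{eqliminf}) is $n^{b_d}\prod_{i\in I}\gcd(p_i^{\nu_i},n)$, which equals $n^{b_d}\tau_d$ only when $\tau_d\mid n$; the conjecture concerns the liminf over \emph{all} $n$, so the target bound must be $o\bigl(n^{b_d}\prod_i\gcd(p_i^{\nu_i},n)\bigr)$ along every sequence, not merely $o(n^{b_d}\tau_d)$ along the subsequence $\tau_d\mid n$. Second, your characterization of ``bad'' via a density-$\frac{n}{2}$ point blends two conditions from the paper: Lemma \ref{lemdv} excludes $\spt\pd S$ from the regular set where $\ta_T<\frac{n}{2}$, but it is the combination with Lemma \ref{lemsingt} (emptiness of the $(d-1)$-stratum, itself established via the density bound $\ta_T<\frac{n}{2}$ applied to $\Z/n\Z$-length-minimizing tangent cones) that actually forces $\pd S=0$; stating the criterion precisely matters once you try to turn it into a counting argument. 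Third, the ball-packing bound on $\#P_T$ needs a separation argument for density-$\frac{n}{2}$ points; overlapping monotonicity balls do not automatically add.

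The decisive obstruction, however, is exactly the one you identify: there is no result in \cite{DHMS,HF,BWst} or elsewhere in the cited literature that converts ``density-$\frac{n}{2}$ tangent cones are rigid'' into a \emph{uniform-in-$n$} constraint confining bad classes to a set of lift-coefficient space of vanishing relative measure. The Almgren-stratification and regularity theory used in Subsection \ref{subsectionfed} is qualitative in $n$; making it quantitative in the way your ``affine slabs of thickness $o(n)$'' picture requires is precisely the kind of new quantitative rigidity you mention, and without it the argument does not close. As written, the proposal establishes nothing beyond what the paper's preceding corollary already gives, namely that the liminf is positive.
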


	\subsection{Lavrentiev gaps}
	Another way to think of our main theorems and corollaries from the variational perspective is to view them as the manifestation of Lavrentiev gap phenomena among the norms $\no{\cdot}_g^{R}$ with $R=\Z,\R,\Z/n\Z$ for $n\in \Z_{\ge 2}.$ 
	\begin{defn}\label{defnlavgap}
		Define 
		\begin{align}\label{lavgap}
			\lag([\Si],R)=\{g|\no{[\Si]}_{g}^{\Z}>\no{[\Si]}_g^{R}\}.
		\end{align}
	\end{defn}
	By Fact \ref{fctnr}, we have
	\begin{fact}
		$\lag([\Si],R)$ is always a (possibly empty) open set.  
	\end{fact}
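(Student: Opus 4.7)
The plan is to reduce the openness of $\lag([\Si], R)$ to a one-line application of Fact \ref{fctnr}. Specifically, I would consider the function
\begin{align*}
    \Phi_{[\Si],R} : g \mapsto \no{[\Si]}_g^{\Z} - \no{[\Si]}_g^{R}
\end{align*}
defined on the space of smooth Riemannian metrics on $M$, and observe that
\begin{align*}
    \lag([\Si], R) = \Phi_{[\Si],R}^{-1}\bigl((0, \infty)\bigr).
\end{align*}
So the content reduces entirely to showing $\Phi_{[\Si],R}$ is continuous.

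For continuity, I would simply invoke Fact \ref{fctnr} twice: once with coefficient ring $\Z$ and once with the given $R$. Fact \ref{fctnr} tells us that both $g \mapsto \no{[\Si]}_g^{\Z}$ and $g \mapsto \no{[\Si]}_g^{R}$ are continuous maps from the space of Riemannian metrics into $\R_{\ge 0}$. Their difference $\Phi_{[\Si],R}$ is therefore continuous, and the preimage of the open set $(0, \infty) \subset \R$ under a continuous map is open. This gives the desired conclusion.

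There is essentially no obstacle here; the fact is a formal corollary of continuity of the mass norms in the metric. The only thing worth noting, for transparency of presentation, is that the result is stated without any claim of non-emptiness: for many pairs $([\Si], R)$ the set $\lag([\Si], R)$ may well be empty (for instance whenever equality always holds in (\ref{eqinnorm})), which is why the statement carefully allows it to be empty. Theorem \ref{thmvn} and Lemma \ref{lemcal} already supply the non-trivial companion information that $\lag([\Si], R)$ is in fact non-empty in wide generality, so no further argument is needed at this point.
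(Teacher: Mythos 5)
Your proposal is correct and is exactly the argument the paper intends: the paper derives this fact directly from Fact \ref{fctnr} (continuity of $\no{\cdot}^R_g$ in $g$), and your proof simply spells out that continuity of the difference $g \mapsto \no{[\Si]}_g^{\Z} - \no{[\Si]}_g^{R}$ makes $\lag([\Si],R)$ the preimage of the open set $(0,\infty)$.
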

	By Fact \ref{fctcal},  $\lag([\Si],\R)$ has the geometric meaning  that
	\begin{fact}
		$\lag([\Si],\R)$ is the set of metrics $g$ where no element of $\mn_{g}^{\Z}([\Si])$ can be calibrated.
	\end{fact}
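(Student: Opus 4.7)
The plan is to deduce the claim as an essentially immediate contrapositive of Fact \ref{fctcal}, combined with the universal inequality in Fact \ref{fctncmp}. First, I would note that by Fact \ref{fctncmp} we always have $\no{[\Si]}_g^{\R}\le \no{[\Si]}_g^{\Z}$, so the defining strict inequality of $\lag([\Si],\R)$, namely $\no{[\Si]}_g^{\Z}>\no{[\Si]}_g^{\R}$, is equivalent to the failure of the equality $\no{[\Si]}_g^{\Z}=\no{[\Si]}_g^{\R}$. Second, I would invoke Fact \ref{fctcal}, which identifies this equality with the existence of a calibrating $\R$-flat cochain for a $\Z$-area-minimizing integral current representing $[\Si]$. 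Taking contrapositives, the failure of equality is equivalent to the statement that no element of $\mn_g^{\Z}([\Si])$ can be calibrated by an $\R$-flat cochain, yielding the claimed identification of the set $\lag([\Si],\R)$.

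Since the statement is essentially a tautological rewriting of Fact \ref{fctcal}, there is no genuine obstacle. The only point requiring a brief check is the quantifier, namely the equivalence between the existence of a single calibrated element of $\mn_g^{\Z}([\Si])$ and the calibratability of every element of $\mn_g^{\Z}([\Si])$. This is straightforward from the definitions: if one $T\in\mn_g^{\Z}([\Si])$ is calibrated by an $\R$-flat cochain $\ai$, then closedness of $\ai$ and $\ai(T)=\ms_g^{\Z}(T)=\no{[\Si]}_g^{\Z}$ force $\no{[\Si]}_g^{\R}\ge \no{[\Si]}_g^{\Z}$, hence equality of the two norms; conversely, once the norms coincide, every $T\in\mn_g^{\Z}([\Si])$ lies in $\mn_g^{\R}([\Si]^{\R})$ and is calibrated by some $\R$-flat cochain via Fact \ref{fctcalcc}. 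This quantifier reconciliation is the only step that must be spelled out, and it closes the argument.
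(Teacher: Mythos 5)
Your proof is correct and follows essentially the same route as the paper, which states this Fact as a direct consequence of Fact \ref{fctcal} (together with the universal inequality in Fact \ref{fctncmp}). The quantifier reconciliation you spell out—that calibratability of one element of $\mn_g^{\Z}([\Si])$ forces the norm equality, which in turn calibrates every element via Fact \ref{fctcalcc}—is implicit in the paper's terse derivation, and your making it explicit is a helpful clarification rather than a deviation.
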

	
	Theorem \ref{thmv} and Theorem \ref{thmvn} translate to\begin{itemize}
		\item the infinite intersection $\cap_{n}\lag([\Si],\Z/n\Z)\cp$ for $\tau_d|n$ and $n$ large enough, is a closed set with non-empty interior,
		\item if $c\ge 2$, $[\Si]$ is free, and $M$ is orientable, then for $\operatorname{gcd}\left(n,\mu_{[\Si]}\right)=1$, the open set $\lag([\Si],\Z/n\Z)$ is non-empty.
		\item if $c\ge 2$, $[\Si]$ is free, and $M$ is orientable, then the open set $\lag([\Si],\R)$ is non-empty.
	\end{itemize}
	Using this formulation, one can raise many different problems, for instance
	\begin{conj}
		(Camillo De Lellis) The open set $\lag([\Si],\R)$ is dense in the space of smooth Riemannian metrics.
	\end{conj}A direct corollary of the above conjecture of De Lellis is that $\Z$-area-minimizing submanifolds are generically not calibrated. 
	\subsection{Naturalness of Theorem \ref{thmvn}}
	We want to remark that $\lag([\Si],\R)$ might appear naturally even when we only consider classical Riemannian manifolds with canonical metrics. For instance, we have
	\begin{cor}\label{corcp}
		If $[\Si]\in H_d(\mathbb{CP}^{\frac{d+c}{2}},\Z),$ the Fubini-Study metric lies in the closure of $\lag([\Si],\R)$.
	\end{cor}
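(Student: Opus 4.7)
The corollary says $g_{FS}\in\overline{\lag([\Si],\R)}$. Since $H_d(\mathbb{CP}^n,\Z)$ vanishes for $d$ odd and equals $\Z[\mathbb{CP}^{d/2}]$ for $d$ even, I may assume $d=2k$ and $[\Si]=m[\mathbb{CP}^k]$ with $m\neq 0$, since otherwise both $\Z$- and $\R$-norms vanish and the conclusion is vacuous. For the statement to be nontrivial $[\Si]$ must not be the top class, forcing $c\ge 2$; then $[\Si]$ is a free class in the orientable manifold $M:=\mathbb{CP}^{(d+c)/2}$, placing us in the setting of Theorem \ref{thmvn} and Lemma \ref{lemcal}.

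My plan is to produce a sequence $h_j\to g_{FS}$ in the space of smooth Riemannian metrics with each $h_j\in\lag([\Si],\R)$. I take $N\subset M$ a smoothly embedded connected representative of $\mu_{[\Si]}[\Si]$, specifically a smooth complex hypersurface section of degree $\mu_{[\Si]}m$, which exists by Bertini's theorem and is connected by Lefschetz; being a complex subvariety, $N$ is automatically calibrated in $(M,g_{FS})$ by the K\"ahler power $\omega^k/k!$. I then apply Lemma \ref{lemzhang} to $N$, but localized to a tubular neighborhood $U_{\e_j}(N)$ of radius $\e_j\to 0$, producing metrics $h_j$ that agree with $g_{FS}$ outside $U_{\e_j}(N)$ while satisfying all five bullets of Lemma \ref{lemzhang} inside. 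Because $N$ is already K\"ahler-calibrated, the second through fifth bullets hold naturally in $g_{FS}$ up to $C^\infty$-small corrections supported in $U_{\e_j}(N)$, so $h_j\to g_{FS}$ in $C^\infty$. The argument of Lemma \ref{lemcal} then yields $h_j\in\lag([\Si],\R)$: since $[\Si]$ is free and $H_d(U_{\e_j}(N),\Z)=\Z[\mu_{[\Si]}\Si]$, any $T\in\mn_{h_j}^{\Z}([\Si])$ must have support escaping $U_{\e_j}(N)$, and the first bullet of Lemma \ref{lemzhang} forces $\no{[\Si]}_{h_j}^{\Z}>\lambda\mu_{[\Si]}\no{[\Si]}_{h_j}^{\R}$ for any prescribed $\lambda>2$, placing $h_j\in\lag([\Si],\R)$. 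Sending $\e_j\to 0$ concludes.

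The principal technical obstacle is realizing the first bullet of Lemma \ref{lemzhang} in this localized setup, namely that every stationary $d$-varifold on $(M,h_j)$ whose support is not contained in $U_{\e_j}(N)$ has $h_j$-area at least $\lambda\ms_{h_j}^{\Z}(N)$. Zhang's original construction enforces this by enlarging the ambient metric outside $U(N)$, an option forbidden by our requirement $h_j=g_{FS}$ off $U_{\e_j}(N)$; moreover, $(\mathbb{CP}^n,g_{FS})$ contains low-area stationary $d$-varifolds, namely lower-degree complex $\mathbb{CP}^k$'s of area $\pi^k/k!$, that naively violate the desired inequality once $\lambda\mu_{[\Si]}m>1$. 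To overcome this, one combines the monotonicity formula in $g_{FS}$ (Lemma \ref{lemmon}), which uniformly bounds mass measures of stationary varifolds from below in terms of density and radius, with the explicit K\"ahler lower bounds on areas of integral homology representatives in $(\mathbb{CP}^n,g_{FS})$; the calibrated structure of $N$ in $g_{FS}$ provides, for $\e_j$ small, enough rigidity to push any minimizing competitor either fully into the tube (contradicting the homology obstruction) or to an area exceeding $\lambda\ms_{h_j}^\Z(N)$. Reconciling this quantitative comparison with the $\lambda>2$ threshold of Lemma \ref{lemcal}, possibly by replacing $N$ with a higher-multiplicity calibrated representative so that $\mu_{[\Si]} m$ dominates the smallest ambient competitor, is the main remaining technical step.
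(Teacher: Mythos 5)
Your proposal has a genuine gap, which you yourself flag as "the main remaining technical step," and it is not a small one. As you correctly observe, $(\mathbb{CP}^{(d+c)/2},g_{FS})$ contains minimal subvarieties $\mathbb{CP}^k$ of very small area that are stationary and lie outside any thin tube around your $N$, so the first bullet of Lemma \ref{lemzhang} \emph{cannot} hold for a metric agreeing with $g_{FS}$ off a shrinking tubular neighborhood. Zhang's construction achieves that bullet precisely by enlarging the metric away from $N$, and once you forbid that, the bullet simply fails; no amount of monotonicity (which gives lower bounds on area, not upper bounds on competitors) or "rigidity of the Kähler calibration" will rescue it, and replacing $N$ by a higher-degree representative only makes $N$ larger, widening rather than closing the gap against low-degree competitors. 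So the strategy of routing through Lemma \ref{lemcal} with a localized Zhang metric cannot be completed.

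The paper's proof sidesteps this entirely and uses no version of Lemma \ref{lemzhang}. Take $N$ a smooth connected complex subvariety representing $2[\Si]$ (e.g.\ a Fermat hypersurface in an equator $\mathbb{CP}^{d/2+1}$), which is calibrated in $g_{FS}$ by $\phi=\omega^{d/2}/(d/2)!$. Choose a smooth $\be\ge 1$ with $\be^{-1}(1)=N$ and consider $\be g_{FS}$. Since $\be g_{FS}\ge g_{FS}$, the form $\phi$ still has comass $\le 1$, still calibrates $N$ (the metric is unchanged along $N$), and has comass $<1$ off $N$; by the constancy theorem $N$ is the \emph{unique} integral current in $2[\Si]$ calibrated by $\phi$. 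Hence $\no{[\Si]}^{\R}_{\be g_{FS}}=\frac12\ms^{\Z}_{\be g_{FS}}(N)$, and if equality $\no{[\Si]}^{\Z}=\no{[\Si]}^{\R}$ held, a $\Z$-minimizer $T$ of $[\Si]$ would have $2T$ calibrated and hence equal to $N$, contradicting the fact that $N$ has multiplicity one while $2T$ has even multiplicity. So $\be g_{FS}\in\lag([\Si],\R)$, and replacing $\be$ by $1+\e(\be-1)$ with $\e\to0$ gives a path of such metrics converging to $g_{FS}$. The two essential ingredients you are missing are (i) the conformal enlargement trick, which preserves the Kähler calibration while making it strict off $N$, and (ii) the parity/divisibility argument $2T=N$ versus multiplicity one, which replaces the quantitative gap estimate of Lemma \ref{lemcal} by a purely structural contradiction.
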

	Here necessarily both $d,c$ are even and at least $2.$
	\begin{proof}
		The class $2[\Si]$ always admits a connected embedded smooth complex algebraic subvariety $N$ as a representative. For instance, one can take Fermat hypersurfaces in an equator $\mathbb{CP}^{\frac{d}{2}+1}$ of $\mathbb{CP}^{\frac{d+c}{2}}.$ Then by \cite[page 70 Example I]{HL}, $N$ is $\Z$-area-minimizing in the Fubini-Study metric $g_{FS}$ and calibrated by the form $\frac{1}{(d/2)!}\om^d,$ where $\om$ is the standard Kahler form in the Fubini-Study metric. Let $\be$ be a smooth function on $\mathbb{CP}^{\frac{d}{2}+1}$ such that $\be\ge 1$ and $\be\m(1)=N.$ Since $\be g_{FS}\ge g_{FS}$ as quadratic forms, by the definition of comass in (\ref{defncms}), in metric $\be g_{FS},$ the form $\frac{1}{(d/2)!}\om^d$ still calibrates $N$. This implies that 
		\begin{align}\label{eqsfs}
			\no{[\Si]}_{\be g_{FS}}^{\R}=\frac{1}{2}\no{2[\Si]}_{\be g_{FS}}^{\R}=\frac{1}{2}\ms_{\be g_{FS}}^{\Z}(N).
		\end{align}  Since $\frac{1}{(d/2)!}\om^d$ have comass $1$ in $\be g_{FS}$ only on $N,$ by the constancy theorem \cite[Section 4.9]{FMgmt}, any integral current calibrated by $\frac{1}{(d/2)!}\om^d$ in $\be g_{FS}$ must be an integer multiple of $N$. As $[\Si]$ is not a torsion class, we deduce that 
		\begin{fact}\label{fctnc}
			$N$ is the unique $\Z$-rectifiable current in $2[\Si]$ calibrated by $\frac{1}{(d/2)!}\om^d$.
		\end{fact}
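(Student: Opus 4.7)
The plan is to prove Fact \ref{fctnc} in three stages: first confine the support of any calibrated $\Z$-rectifiable current to $N$, then apply the constancy theorem to reduce it to an integer multiple of $N$, and finally use the homology constraint together with torsion-freeness of $H_d(\mathbb{CP}^{(d+c)/2},\Z)$ to pin down the integer to $1$.

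For the first stage, the key is to analyze how $\cms_{\be g_{FS}} \phi$, with $\phi := \frac{1}{(d/2)!}\om^d$, varies with the point $p$. If $\{v_i\}$ is a $g_{FS}$-orthonormal basis of a tangent $d$-plane at $p$, then $\{v_i/\sqrt{\be(p)}\}$ is $\be g_{FS}$-orthonormal, so the $\be g_{FS}$-unit simple $d$-vector of that plane equals $\be(p)^{-d/2}$ times its $g_{FS}$-unit simple $d$-vector. Hence the $\be g_{FS}$-comass of $\phi$ at $p$ equals $\be(p)^{-d/2}$ times its $g_{FS}$-comass, which is at most $1$ by Wirtinger's inequality with equality only on complex $(d/2)$-planes. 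Since $\be \geq 1$ with equality exactly on $N$, the pointwise comass of $\phi$ in $\be g_{FS}$ is strictly less than $1$ off $N$ and equals $1$ on $N$, attained only on complex tangent planes. A calibrated current $T$ must have its $\be g_{FS}$-unit oriented tangent $d$-vector $\eta_T$ satisfy $\phi(\eta_T) = 1$ for $\hm^d$-a.e.\ point of $\spt T$, which forces $\spt T \subset N$ and orientation alignment with $N$.

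For the second stage, since $T$ represents a homology class it is a cycle, and its support lies in the connected closed $d$-dimensional smooth submanifold $N$. The constancy theorem \cite[Section 4.9]{FMgmt} then gives $T = kN$ for some $k \in \Z$, with $k \geq 1$ by the orientation matching from the previous step. For the third stage, $[T] = 2[\Si] = [N]$ in $H_d(\mathbb{CP}^{(d+c)/2},\Z)$, which is $\Z$ or $0$ and in particular torsion-free, so $(k-1)[N] = 0$ forces $k = 1$ and hence $T = N$. I do not foresee serious technical obstacles: the core input is the conformal scaling of comass, which is essentially a one-line computation combined with Wirtinger rigidity, and the remaining steps are routine applications of constancy and the structure of projective-space homology.
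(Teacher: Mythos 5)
Your argument is correct and follows essentially the same route as the paper: you confine the support to $N$ via the comass being strictly below $1$ off $N$ (you spell out the conformal scaling of comass and the Wirtinger rigidity that the paper simply asserts), apply the constancy theorem on the connected closed submanifold $N$ to get $T=kN$, and then use that $2[\Si]$ is nonzero in the torsion-free group $H_d(\mathbb{CP}^{(d+c)/2},\Z)$ to force $k=1$. The only cosmetic difference is that your orientation observation ($k\ge 1$) is redundant given the final homological step, but it is harmless.
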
 If $\no{[\Si]}^{\R}_{\be g_{FS}}=\no{[\Si]}^{\Z}_{\be g_{FS}}$, then by (\ref{eqsfs}), any element $T\in\mn_{\be g_{FS}}^{\Z}([\Si])$ must satisfy $\ms_{\be g_{FS}}^{\Z}(T)=\frac{1}{2}\ms_{\be g_{FS}}^{\Z}(N).$ Thus, we have $2T\in 2[\Si]$ and $2T$ has the same area as $N.$ By Fact \ref{fcthl}, this implies that $2T$ is also calibrated by $\frac{1}{(d/2)!}\om^d$, which contradicts Fact \ref{fctnc}. Replacing $\be$ by $1+\e(\be-1)$ for positive $\e\to0$. We are done.  
	\end{proof}
	Using Almgren's constructions in \cite[Section 5.11]{HF}, similar results to Corollary \ref{corcp} also hold for the standard flat tori $\R^{d+c}/\Z^{d+c}.$
\subsection{Extension of Result \ref{rstc}}
Result \ref{rstc} is stated for area-minimizing cones with subanalytic link. With more careful analysis using o-minimal geometry and the boundary case of the extended monotonicity formula, Result \ref{rstc} holds for area-minimizing integral currents with globally subanalytic support, i.e., $\ran$-definable support in o-minimal geometry \cite{MComin,VDDom}. Unfortunately, that will significantly prolong this manuscript and does not seem like the sharp condition.

Indeed, there exists non-globally-subanalytic area-minimizing integral currents. Zhihan Wang (personal communications) points out that Hardt-Simon foliations of Lawson-Simons \cite{HS} are known to be not globally subanalytic in some cases. However, by \cite{FMeu}, each leaf of these folations are $\Z/n\Z$-area-minimizing for $n\ge 2$.

 Start with the following observations
\begin{fact}\cite[3.3 Lemma]{FMvbd}
	A $d$-dimensional $\Z/n\Z$-area-minimizing $\Z/n\Z$-flat chain in $\R^{d+c}$ has Euclidean volume growth and has a tangent cone at infinity.
\end{fact}
Here by Euclidean volume growth we mean that the mass of the chain in the ball of radius $r$ centered at $0$ is $O(r^d)$. However, there are many area-minimizing integral currents without Euclidean volume growth. By \cite{EWa,WSa}, any non-algebraic holomorphic subvariety in complex Euclidean space does not have Euclidean volume growth.

Thus, any analogue of Result \ref{rstc} must have Euclidean volume growth as the assumption. We conjecture that
\begin{conj}
If a $d$-dimensional $\Z$-area-minimizing integral current $T$ in $\R^{d+c}$ has Euclidean volume growth, then the $\Z/n\Z$-flat chain $T^{\Z/n\Z}$ is $\Z/n\Z$-area-minimizing for $n$ large enough.	
\end{conj} 
\subsection{Classifications of area-minimizing pairs of planes mod $2$ and mod $3$}\label{secremp}
The resolution of angle conjecture \cite{DN,GLac} classifies  $\Z$- and $\R$-area-minimizing pairs of planes. Theorem \ref{thmcpm} finishes the classification of $\Z/n\Z$-area-minimizing pairs of planes. The only cases left are $\Z/2\Z$ and $\Z/3\Z$ cases. Here we recall again Morgan's conjectures:
\begin{conj}\cite[Problem 16]{FMwtc}A pairs of oriented planes is $\Z/2\Z$-area-minimizing if and only if the pair is $\Z$-area-minimizing with any orientations on $P,Q$. 
\end{conj}
Partial progress is obtained by Lawlor in \cite{GLds,GL,GLmk}. The $\Z/3\Z$ case is closely related to size-minimization \cite{FMsm}. We have the following conjecture by Morgan (personal communications) :
\begin{conj}(Morgan)
A pairs of planes is $\Z/3\Z$-area-minimizing if and only if the pair is $\Z$-area-minimizing and the sum of characterizing angles is at least $\frac{2}{3}\pi.$
\end{conj}
Partial progress is obtained by Lawlor in \cite{GLpp,GL}.
\subsection{Further analogies with number theory}
\label{secdot}
Recall the definition of $p$-adic norms  \cite[Chapter I]{NKpa}. For readers with experience in number theory, the first question that comes to mind might be why we are not using $R=\Z_p$, i.e., the $p$-adic integers, as the Hasse principle is usually stated via the $p$-adic integers to simultaneously capture information from all solutions modulo all prime powers. Roughly speaking, the reason is that we now have both algebraic and analytic structures, and we need both structures to be compatible across different coefficient rings. 

By Ostrowski's theorem \cite[Chapter I]{NKpa}, up to equivalence, on $\Q$ there are only three types of norms that respect multiplication,
\begin{itemize}
	\item the absolute value norm,
	\item the $p$-adic norms,
	\item the trivial norm.
\end{itemize}	
Here by respecting multiplication, we mean a norm $\no{\cdot}$ satisfying $\no{pq}=\no{p}\no{q}$ for all $p,q\in\Q$. The trivial norm is defined to be $0$ on the zero element of a ring $R$ and $1$ otherwise. 

The author's heuristic is that for each norm in Ostrowski's theorem, we have a slightly different version of the Hasse principle.

By Definition \ref{defnnorm}, $\Z$-, $\R$- and $\Z/n\Z$-flat chains are defined using the absolute value norm and its induced norms on $\Z/n\Z.$  Our theorems can be interpreted as the Hasse principle with respect to the absolute value norm. There are two more canonical norms by Ostrowski's theorem.
\subsubsection{$p$-adic norms}
By \cite[Chapter II.1.1]{JSca}, $\Z_p$ is the inverse limit of $\Z/p^n\Z$ with natural numbers $n\to\infty$ and the system of homomorphisms being mod $p^n$ reductions. On one hand, this inverse limit system is not compatible with the norm $\no{\cdot}^{\Z/p^n\Z}$ in Definition \ref{defnnorm}. On the other hand, the $p$-adic norm does naturally induce a norm in the sense of Definition \ref{defnnorm} on $\Z/p^n\Z$, i.e., no longer respecting multiplication. The norm is defined by applying the $p$-adic norm to the unique lifts from $\Z/p^n\Z$ to $ \Z\cap[0,p^n-1),$ which we will also call the $p$-adic norms on $\Z/p^n\Z$. We conjecture that
\begin{conj}
	(The Hasse principle for $p$-adic variational problems) For a variational problem that can be formulated on chain spaces over $\Q_p,\Z_p$ and $\Z/p^n\Z$ for $n\ge 1$, all equipped with the  $p$-adic norm, the solutions in $\Z_p$-chains can be reconstructed from the corresponding solutions in $\Q_p$-chains and $\Z/p^n\Z$-chains for all $n\ge 1$.  
\end{conj}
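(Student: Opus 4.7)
The plan is to emulate the strategy behind Theorem \ref{thmv}, replacing the inverse system $\{\Z/n\Z\}_{n\ge 2}$ by $\{\Z/p^n\Z\}_{n\ge 1}$ (whose inverse limit is $\Z_p$), and replacing the asymptotic role of $H_d(M,\R)$ by $H_d(M,\Q_p)$. First I would verify that the $p$-adic absolute value $|\cdot|_p$ on $\Z_p$, its descent to each $\Z/p^n\Z$ via the canonical lift into $\Z\cap[0,p^n)$, and its extension to $\Q_p$ all satisfy the axioms of Definition \ref{defnnm}. This places us inside White's framework of flat chains over normed abelian groups (\cite{WFfc}), supplying existence of flat chain spaces, mass norms $\ms_g^R$, and $R$-area-minimizing representatives for $R\in\{\Z_p,\Q_p,\Z/p^n\Z\}$, and a reduction map $\Z_p\to\Z/p^n\Z$ inducing reductions of homology classes and of chains.

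Second, I would transfer the two main analytic pillars used in the archimedean case. Lemma \ref{lemmon} and Lemma \ref{lemdst} depend only on Riemannian geometry and on the fact that minimizers induce stationary mass measures, so they carry over once $\ms_g^R$ is defined; the Almgren stratification (Fact \ref{fctas}) is also insensitive to the coefficient norm and yields an analogue of Lemma \ref{lemsingt}. Third, and most substantively, I would prove a $p$-adic analogue of Lemma \ref{lemdv}: at a regular point of a $\Z/p^n\Z$-minimizer $T$ whose $p$-adic density lies below a critical threshold, the boundary of any Federer-type representative modulo $p^n$ must be confined to the singular set. Coupled with the stratification bound, this produces for $n$ large a canonical integral lift of $T$ to a $\Z$-cycle in a well-defined homology class, and assembling these lifts compatibly across $\{\Z/p^n\Z\}_n$ yields a $\Z_p$-cycle. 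The role of $\Q_p$-information is exactly the role played by $\R$ in \S\ref{subsectiona}: to pin down uniquely the homology class in $H_d(M,\Z_p)$ via a Lemma \ref{lemmsbd}-type bound controlling $\no{\cdot}_g^{\Z_p}$ by $\no{\cdot}_g^{\Q_p}$.

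The hard part will be this third step. The algebraic input driving the classical Lemma \ref{lemdv} is that the mod $n$ norm satisfies $\no{\theta\pm n}^{\Z/n\Z}\ge n/2$ whenever $\theta\in(-n/2,n/2]$, producing the sharp density jump across the boundary of the Federer representative. By contrast, the $p$-adic norm is ultrametric, with $|a+b|_p\le\max(|a|_p,|b|_p)$ typically an equality, and higher powers of $p$ carry \emph{smaller} $p$-adic mass rather than larger. Consequently there is no naive analogue of the critical density $n/2$, and high-multiplicity chains threaten to become pathologically cheap. A new structural theory for $p$-adic tangent cones and $p$-adic area-minimizing cones over finite cyclic groups will need to be developed, plausibly via a variant of White's deformation theorem adapted to the ultrametric setting, in order to identify the correct density threshold governing the regularity-of-Federer-representative phenomenon. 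I also anticipate that finer stratum-by-stratum arguments in the Almgren decomposition, and genuinely new regularity phenomena without archimedean precedent, will be unavoidable.
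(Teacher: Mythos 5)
The statement you are addressing is a \emph{conjecture} in the paper, not a theorem: the paper offers no proof, and indeed formulates it explicitly as a guess about what a $p$-adic analogue of the Hasse principle should look like. There is therefore no ``paper's own proof'' to compare against, and you should be explicit upfront that your write-up is a research plan rather than a proof. Read charitably, that is what you intend, since you say that the third step ``will need to be developed'' and that ``genuinely new regularity phenomena'' are unavoidable. But the framing as a ``proof proposal'' obscures this, and the first two steps are phrased more confidently than they deserve.

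Your identification of the central obstruction is correct and well-taken: the archimedean density jump $\lvert\theta\pm n\rvert\ge n/2$ driving Lemma \ref{lemdv} has no ultrametric analogue, and in the $p$-adic norm multiples of high powers of $p$ are \emph{cheap} rather than expensive, so the mechanism by which Federer representatives of $\Z/n\Z$-minimizers become cycles simply does not port over. However, you understate how far upstream the trouble starts. Even the ``transfer'' of Lemma \ref{lemmon} and Lemma \ref{lemdst} in your second step is not automatic: those lemmas exploit the fact that, for the norms of Definition \ref{defnnorm}, the mass measure of a minimizer is an integral stationary varifold with density bounded \emph{below} by $1$ at every point of support, which in turn feeds the monotonicity formula. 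Under the $p$-adic norm on $\Z/p^n\Z$ the nonzero multiplicities take values in $\{p^{-(n-1)},\ldots,p^{-1},1\}$, so lower density bounds degenerate as $n\to\infty$; more dramatically, on $\Z_p$ and $\Q_p$ the set of attained norms accumulates at $0$, so one cannot even expect White-type compactness or a uniform lower density bound without new hypotheses. Thus not only the density threshold of Lemma \ref{lemdv} but also the existence and basic varifold structure of minimizers are open in the $p$-adic setting, and your plan should flag this as a prerequisite rather than a transfer. Finally, a small but worth noting point: the $p$-adic norm fails the homogeneity over $\Z$ that you tacitly use when invoking Lemma \ref{lemnorm}-type Banach-norm comparisons, since for instance $\lvert p\cdot 1\rvert_p=p^{-1}\ne p\cdot\lvert 1\rvert_p$, so a Lemma \ref{lemmsbd}-type bound relating $\no{\cdot}^{\Z_p}_g$ to $\no{\cdot}^{\Q_p}_g$ would need a genuinely ultrametric replacement for the finite-dimensional Banach-space argument, not just a formal substitution of $\Q_p$ for $\R$.
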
 
Here $\Q_p$ is the $p$-adic rational numbers, the fraction field of $\Z_p$. The choice of $\Q_p$ is to mimic the role of $\R$ as $\R$ is the completion of the fraction field $\Q$ of $\Z$ with respect to the absolute value norm.
\subsubsection{Asymptotic behavior of $\Z/n\Z$-area-minimizing representatives as $n\to\infty$}
If we want a limit of $\Z/n\Z$ compatible with $\no{\cdot}^{\Z/n\Z}$ as $n\to\infty$, we can renormalize the norm on $\Z/n\Z$ to $\frac{1}{n}\no{\cdot}^{\Z/n\Z},$ and take the direct limit of $\Z/n\Z$ with the homomorphisms being $\times m$ multiplications from $\Z/n\Z$ to $\Z/nm\Z.$ The resulting direct limit of $\Z/n\Z$ as $n\to\infty$ is $\Q/\Z$ and the norm is the absolute value norm applied to the unique lifts from $\Q/\Z$ to $[-\frac{1}{2},\frac{1}{2}).$ Recall Federer's result (\ref{eqfed}) and Section \ref{secdrh}. We conjecture that
\begin{conj}
	$\R/\Z$-area-minimizing flat chains represents the asymptotic behavior of $\Z/n\Z$-area-minimizing flat chains as $n\to\infty$, just as $\R$-area-minimizing flat chains represents the asymptotic behavior of $\Z$-area-minimizing flat chains.
\end{conj}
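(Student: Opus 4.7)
The plan is to reformulate the conjecture as a $\Gamma$-convergence statement. Using the embedding $\Z/n\Z\hookrightarrow\R/\Z$ defined by $k\mapsto k/n\pmod 1$, one checks directly from Definition \ref{defnnorm} that this embedding carries the rescaled norm $\frac{1}{n}\no{\cdot}^{\Z/n\Z}$ isometrically onto its image in $\R/\Z$ (equipped with the absolute value norm on $[-\tfrac{1}{2},\tfrac{1}{2})$), and hence every $\Z/n\Z$-flat chain $T$ may be regarded as an $\R/\Z$-flat chain with $\ms_g^{\R/\Z}(T)=\frac{1}{n}\ms_g^{\Z/n\Z}(T)$. Since $\Q/\Z=\varinjlim_n\Z/n\Z$ is dense in $\R/\Z$ under this norm, one expects the space of $\R/\Z$-flat chains (developed within White's general framework of flat chains with coefficients in a complete normed abelian group) to be the natural flat closure of $\bigcup_n\Z/n\Z$-flat chains.

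In this framework, the conjecture becomes a pair of $\Gamma$-convergence inequalities. For the \textbf{liminf} direction, I would take a sequence $T_n\in\mn_g^{\Z/n\Z}(\om_n)$ with uniformly bounded $\frac{1}{n}\ms_g^{\Z/n\Z}(T_n)$, apply a compactness theorem for $\R/\Z$-flat chains (a consequence of White's deformation and closure results) to extract a subsequential flat-norm limit $T_\infty$, and then use lower semicontinuity of the $\R/\Z$-mass under flat convergence to conclude $\ms_g^{\R/\Z}(T_\infty)\le\liminf_n\frac{1}{n}\ms_g^{\Z/n\Z}(T_n)$. For the \textbf{limsup} direction, given any $\R/\Z$-area-minimizer $T$ representing $\om\in H_d(M,\R/\Z)$, I would construct a sequence of $\Z/n\Z$-chains $T_n$ whose $k/n$-valued densities rationally approximate the (generically irrational) densities of $T$, controlling the flat distance $\F^{\R/\Z}(T,T_n)$ and the mass error via a version of Federer's polyhedral deformation theorem adapted to the $\R/\Z$-setting. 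Taken together, these inequalities force $T_\infty\in\mn_g^{\R/\Z}(\om)$ and $\no{\om}_g^{\R/\Z}=\lim_n\frac{1}{n}\no{\om_n}_g^{\Z/n\Z}$, which is the precise form of the conjecture, in analogy with Federer's asymptotic identity (\ref{eqfed}).

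The main obstacle will be making the $\R/\Z$-flat chain theory robust enough to support these arguments. Unlike the $\R$-, $\Z$-, and $\Z/n\Z$-theories invoked throughout this manuscript, the $\R/\Z$-theory has not been systematically developed for area-minimization: the coefficient group admits no free $\Z$-action compatible with the norm (no homogeneity axiom), which complicates standard deformation arguments, and the analogue of Fact \ref{fctrc} identifying finite-mass flat chains with rectifiable currents is unclear in this setting. Even granted White's compactness, the $\Gamma$-limsup construction must preserve the homology class while choosing denominators compatible with the boundary operator, which appears to require a genuinely number-theoretic adaptation of the polyhedral deformation theorem. Finally, upgrading the statement from convergence of minimum values to Kuratowski convergence of the entire minimizing sets $\mn_g^{\Z/n\Z}$ to $\mn_g^{\R/\Z}$ (the strongest sense of \emph{represents the asymptotic behavior}) will require uniform estimates along divisibility-compatible sequences $n\mid m\to\infty$, which again depends on the still-missing closure theory for $\R/\Z$-flat chains.
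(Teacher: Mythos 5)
This statement is explicitly posed as an open \emph{conjecture} in Section 5.5 of the paper; the paper offers no proof, only the motivating observation that $\Q/\Z$ is the direct limit of the $\Z/n\Z$ under the multiplication-by-$m$ maps and that the renormalized norms $\frac{1}{n}\no{\cdot}^{\Z/n\Z}$ are compatible with this limit. So there is no proof to compare against, and your submission is correctly framed as a program rather than a proof.

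As a program, your $\Gamma$-convergence reformulation is sensible and the embedding $\Z/n\Z\hookrightarrow\R/\Z$, $k\mapsto k/n$, does carry $\frac{1}{n}\no{\cdot}^{\Z/n\Z}$ isometrically to the $\R/\Z$-norm, matching the paper's renormalization. But the obstacles you flag are not minor technicalities that can be deferred; they are exactly where the conjecture is open, and your sketch does not resolve them. In particular: (i) White's rectifiability theorem for flat chains with group coefficients requires that the coefficient group have no nonzero elements of arbitrarily small norm, and $\R/\Z$ fails this badly (the elements $1/m \bmod 1$ have norm $1/m\to 0$), so the analogue of Fact~\ref{fctrc} genuinely fails and even the existence of $\R/\Z$-area-minimizers is unclear; (ii) the $\Gamma$-limsup step requires a density-approximation-and-deformation argument in the $\R/\Z$-setting that has never been carried out, and the lack of homogeneity (which the paper deliberately drops) blocks the standard cone-construction step of Federer's deformation theorem; (iii) the conjecture as stated in the paper has no precise formulation — you had to invent one (Kuratowski convergence of $\mn_g^{\Z/n\Z}$ to $\mn_g^{\R/\Z}$), and whether that is the ``right'' formulation is itself part of the open problem. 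Until the closure and rectifiability theory for $\R/\Z$-flat chains exists, the liminf and limsup inequalities you outline cannot be established, so what you have is a plausible roadmap with the hard parts clearly labeled, not a proof.
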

Here we use $\R/\Z$ instead of $\Q/\Z$ as we want our ring to be complete with respect to our choices of norms.
\subsubsection{the trivial norm}
By \cite{BWrc,BWdt} the trivial norm induces the so-called size, i.e., Hausdorff measure of rectifiable underlying sets of chains. We will follow the literature convention and call area-minimizing flat chains with respect to the trivial norm size-minimizing flat chains. 

Unfortunately, $\Z$-flat chains with respect to the trivial norm do not enjoy compactness theorems, so finding size-minimizing representatives in integral homology is a long-standing open problem. The only progress to date is by Frank Morgan \cite{FMsm} in some subcases of codimension $c=1$. On the other hand, by \cite{BWrc,BWdt},  $\Z/n\Z$-size-minimizing flat chains always exist and they enjoy good regularity:
\begin{fact}
	A $d$-dimensional $\Z/n\Z$-size-minimizing flat chain $T$ satisfies
	\begin{itemize}
		\item  (\cite{BWfrfc}) the $d$-th stratum in the Almgren stratification of $T$ is a smooth open not necessarily connected $d$-dimensional submanifold,
		\item (\cite{FMmcr,LScy}) near each point in the $(d-1)$-th stratum in the Almgren stratification of $T$, the support of $T$ decomposes into three $d$-dimensional submanifolds with boundaries meeting along a $(d-1)$-dimensional submanifold at equal angles of $\frac{2\pi}{3}$, i.e., forming $Y$-shaped triple junctions.
	\end{itemize} 
\end{fact}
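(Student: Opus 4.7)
The plan is to treat the two bullets separately, in each case pairing a classification of tangent cones with an $\epsilon$-regularity theorem drawn from the cited works.

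For the first bullet, if $p$ belongs to the $d$-th stratum of $T$, then by definition $p$ admits a tangent cone $C_p$ invariant under a $d$-dimensional group of translations. Since $C_p$ is a $d$-dimensional $\Z/n\Z$-size-minimizing cone in $\R^{d+c}$ with $d$-dimensional translational invariance, it must coincide with a flat $d$-plane $P$ carrying a non-zero coefficient in $\Z/n\Z$; because the size functional is insensitive to the value of the coefficient, the relevant datum is simply the plane $P$. I would then invoke White's Allard-type $\epsilon$-regularity theorem for size-minimizing flat chains (\cite{BWfrfc}): whenever the tangent cone is a single plane, $T$ agrees in a neighborhood of $p$ with a smooth $d$-dimensional submanifold. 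Taking the union of all such neighborhoods produces the claimed smooth, open, not necessarily connected $d$-dimensional submanifold.

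For the second bullet, a point $p$ in the $(d-1)$-th stratum has a tangent cone splitting as $\R^{d-1}\times C_1$, where $C_1$ is a $1$-dimensional $\Z/n\Z$-size-minimizing cone in $\R^{c+1}$. I would classify $C_1$ by a Steiner-tree argument: since size measures only Hausdorff length of the support and the cone must balance at the origin, the only admissible configurations are a single line, which returns us to the first bullet, or exactly three unit rays meeting at equal angles $2\pi/3$; any junction with four or more rays is strictly shortened by inserting an auxiliary triple junction, and unequal angles are ruled out by first variation along the seam. Hence $C_1$ is a $Y$-tripod. It remains to promote tangent-cone information to a neighborhood statement, for which I would apply the $\epsilon$-regularity theorems of Morgan (\cite{FMmcr}) and Simon (\cite{LScy}) adapted to the $Y$-singular model: near $p$ the support of $T$ is a $C^{1,\alpha}$ perturbation of $\R^{d-1}\times Y$, yielding three smooth $d$-dimensional sheets joined along a smooth $(d-1)$-dimensional seam at the prescribed angle.

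The principal obstacle is the $\epsilon$-regularity step near the $Y$-singularity rather than the tangent-cone classification. Excess-decay at smooth interior points reduces to an Allard-type freezing to a plane, but at a triple junction one first needs uniqueness of the tangent cone and then $C^{1,\alpha}$ regularity of the singular seam. This is handled in codimension one by Taylor and Morgan via a careful blow-up and monotonicity scheme, and extended to arbitrary codimension by Simon through cylindrical Lojasiewicz-Simon estimates, so the heavy lifting is entirely internal to the cited works; the only nontrivial verification left is that the $\Z/n\Z$-size-minimizing hypothesis feeds correctly into the hypotheses of those regularity theorems, in particular that the monotonicity of density ratios and the almost-minimality needed to run the blow-up remain available for size (rather than mass) under the $\Z/n\Z$ coefficient constraint.
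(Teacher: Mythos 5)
The paper offers no argument of its own for this Fact, deferring entirely to the cited works of White, Morgan, and Simon; your reconstruction, pairing the tangent-cone classification (a multiplicity plane at top-stratum points; $\R^{d-1}\times Y$ at codimension-one-stratum points via the Steiner/first-variation argument, which for the size functional is coefficient-blind) with the corresponding $\epsilon$-regularity theorems, is the correct way to read those references. You have also correctly isolated the one step that genuinely needs verification, namely that the unit-density varifold of the support of a $\Z/n\Z$-size-minimizer is stationary so that monotonicity and the blow-up/Lojasiewicz--Simon machinery are available. One small point your Steiner classification silently assumes away: for coefficient groups where three nonzero coefficients cannot sum to zero (e.g.\ $\Z/2\Z$, since $1+1+1\neq 0$) the $Y$-cone is not a cycle, so the $(d-1)$-th stratum is empty and the second bullet holds vacuously -- not a gap, but worth noting since your tangent-cone dichotomy is phrased as if the $Y$ is always an admissible competitor.
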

We conjecture that
\begin{conj}
	(The Hasse principle for size-minimizing flat chains) $\Z$-size-minimizing representatives of an integral homology class $[\Si]$ exist and can be reconstructed from $\R$-size-minimzing representatives of $[\Si]^{\R}$ and $\Z/n\Z$-size-minimizing representatives of $[\Si]^{\Z/n\Z}$ for $n\in\Z_{\ge 2}$.
\end{conj}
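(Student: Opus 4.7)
The plan is to transpose the architecture of Theorem \ref{thmv} to the size functional, with the crucial caveat that the existence of $\Z$-size-minimizing representatives must be produced as an output of the argument rather than taken as input. The strategy: for $n$ large with $\tau_d \mid n$, start with a $\Z/n\Z$-size-minimizing representative $T_n$ of $[\Si]^{\Z/n\Z}$, lift it to a $\Z$-rectifiable current $S_n$ via Federer's ``representative'' modulo $n$ (taking the unique integer representative of each multiplicity in $(-\tfrac{n}{2},\tfrac{n}{2}]$, which preserves the support and hence the size), show $\pd S_n = 0$, and prove $S_n$ is $\Z$-size-minimizing in $[\Si]$. The role of $\R$ mirrors Section \ref{subsectiona}: the $\R$-size-minimizing representative of $[\Si]^{\R}$ supplies the quantitative lower bound on $\Z$-size growth on $H_d(M,\Z)$ needed to pin down $[S_n] = [\Si]$.

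Concretely I would first establish a size analogue of Lemma \ref{lemdst}: a uniform upper density bound $\ta_{T_n}(p) \le C_{[\Si]}$ independent of $n$, via mass--size comparison through the Federer lift together with the metric-uniform monotonicity formula Lemma \ref{lemmon}. For $n > 2C_{[\Si]}$ every density is strictly below $n/2$. Next I would transport Lemma \ref{lemdv} to the size setting, obtaining $\spt \pd S_n \s \sing T_n$. The size regularity fact stated in Subsection \ref{secdot} says $\sing T_n$ has its $(d-1)$-stratum a smooth submanifold of $Y$-shaped triple junctions. The key new input is that although the density at a $Y$-junction equals $3/2$, one can nevertheless rule out $\pd S_n$ developing across such a junction: any such boundary component would yield a local size-decreasing modification of $T_n$ by a controlled surgery near the triple edge, contradicting size-minimality. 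Once $\spt \pd S_n$ has Hausdorff dimension at most $d-2$, \cite[Section 4.1.20]{HF} forces $\pd S_n = 0$. Identification of $[S_n]$ with $[\Si]$ then follows the Section \ref{subsectiona} argument via Lemma \ref{lemnt} and the $\R$-size growth bound, and minimality is immediate: for any integral competitor $W \in [\Si]$, the $\Z$-size of $W$ bounds the $\Z/n\Z$-size of $W^{\Z/n\Z}$, which bounds the size of $T_n$, which equals the size of $S_n$.

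The main obstacle is the total absence of a compactness and closure theory for $\Z$-size-minimizing flat chains, which is precisely why existence is a longstanding open problem outside Morgan's partial results in codimension one. Consequently the technical heart of the argument is the regularity step at the $(d-1)$-stratum of $\sing T_n$: unlike the mass case, the density argument alone does not forbid a boundary component of $S_n$ along the $Y$-junction edges, and a genuine variational surgery argument must be developed. A second nontrivial difficulty is that the analogue of Fact \ref{fctnrt} for $\R$-size, namely that $\R$-size defines a bona fide norm on $H_d(M,\R)$ whose zero locus is exactly the torsion classes, is not available off the shelf and must be built in parallel in order to deploy Lemma \ref{lemmsbd}-style growth control. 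These two obstructions are the genuine content of the conjecture; the remainder tracks the proof of Theorem \ref{thmv} with essentially cosmetic changes.
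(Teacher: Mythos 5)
This statement is a conjecture, not a theorem: the paper offers no proof, and in fact explicitly flags the existence part as a wide-open problem (``finding size-minimizing representatives in integral homology is a wide-open problem. The only progress so far is by Frank Morgan \cite{FMsm} in some subcases of codimension $c=1$''). So there is no paper proof to compare against; I will evaluate your proposal on its own terms.

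Your architecture (transplant the Theorem~\ref{thmv} pipeline of density bound, Federer lift, boundary-killing via stratification, and homology identification via $\R$-growth control, but with existence as output rather than input) is a sensible scaffold, and you correctly flag two of the real obstructions. However, the step you describe as ``the key new input'' is where the proposal actually breaks, and it breaks for a reason stronger than ``a variational surgery argument must be developed.'' In the mass case, Lemma~\ref{lemsingt} kills the $(d-1)$-stratum because any $(d-1)$-strat tangent cone forces density $\geq n/2$, which contradicts the uniform bound for large $n$; the stratum genuinely vanishes. In the size case the $Y$-junction has density $3/2$ regardless of $n$, so no density bound removes it, and the junction is present in honest $\Z/n\Z$-size-minimizers (indeed it is the generic $(d-1)$-stratum behavior). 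Consequently the Federer lift $S_n$ with multiplicities chosen in $(-\tfrac{n}{2},\tfrac{n}{2}]$ genuinely has $\pd S_n \neq 0$ along the junction edges; no local size-decreasing surgery on $T_n$ is available to rule this out, because $T_n$ is already size-minimizing with the junction. The proposed ``controlled surgery'' would contradict the minimality you are assuming.

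The correct move at this step is not surgery but a different choice of $\Z$-lift: since size sees only the support, you are free to assign multiplicities such as $1,1,-2$ across a $Y$-junction, which produces a $\Z$-cycle of the same size as $T_n$ but larger mass. This, however, opens fresh problems your proposal does not address: (i) such a cycle-producing assignment must be made globally consistently across the entire singular stratification, and it is unclear that one always exists for higher-codimension singular strata (compare the tetrahedral cone, where three junction edges meet at a vertex); (ii) the homology class of the resulting lift now depends on the assignment, and the Section~\ref{subsectiona} argument identifying $[S_n]=[\Si]$ via Lemma~\ref{lemnt} and $\R$-growth does not immediately carry over because the lift is no longer unique in the sense used there; and (iii) your claim of easy $\Z$-size-minimality for $S_n$ uses that $\Z/n\Z$-size of $W^{\Z/n\Z}$ is bounded by $\Z$-size of $W$, which is fine, but one still needs $S_n$ itself to lie in $[\Si]$, which is exactly the point in question. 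In short, the conjecture's difficulty is concentrated precisely at the step you treated most lightly, and the fix you suggest would overturn rather than use the hypothesis.

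Your two other flagged obstructions are genuine and stated accurately: the lack of compactness for $\Z$-size, and the absence of a worked-out $\R$-size norm theory on $H_d(M,\R)$ as a substitute for Fact~\ref{fctnrt} and Lemma~\ref{lemmsbd}. Identifying these, together with the honest statement that ``these obstructions are the genuine content of the conjecture,'' is correct self-assessment. The proposal is best read as a roadmap, not a proof, and it should be amended to withdraw the surgery claim and replace it with the alternative-lift idea together with an explicit acknowledgment of the global consistency problem.
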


	\printbibliography
	\end{document}